\newtheorem{thm}{Theorem}[section]
\newtheorem{lemma}[thm]{Lemma}
\newtheorem{cor}[thm]{Corollary}
\newtheorem{conj}[thm]{Conjecture}
\newtheorem{prop}[thm]{Proposition}
\newtheorem*{claim*}{Claim}
\theoremstyle{definition}
\newtheorem{defn}[thm]{Definition}
\newtheorem{rmk}[thm]{Remark}
\newtheorem*{Alg}{Algorithm for finding cycles}
\newcommand{\ds}{\displaystyle}
\def\A{\mathcal{A}}
\def\B{\mathcal{B}}
\def\C{\mathcal{C}}
\def\D{\mathcal{D}}
\def\F{\mathcal{F}}
\def\G{\mathcal{G}}
\def\HH{\mathcal{H}}
\def\I{\mathcal{I}}
\def\J{\mathcal{J}}
\def\K{\mathcal{K}}
\def\M{\mathcal{M}}
\def\P{\mathcal{P}}
\def\Q{\mathcal{Q}}
\def\R{\mathcal{R}}
\def\S{\mathcal{S}}
\def\N{\mathbb{N}}
\def\RR{\mathbb{R}}
\def\a{\mathbf{a}}
\def\k{\mathbf{k}}
\def\le{\leqslant}
\def\ge{\geqslant}
\def\eps{\varepsilon}
\def\<{\langle}
\def\>{\rangle}
\def\ex{\textup{ex}}
\begin{document}

\author{Robert Morris} 

\author{David Saxton}

 \address{
   Robert Morris, David Saxton \hfill\break
    IMPA, Estrada Dona Castorina 110, Jardim Bot\^anico, Rio de Janeiro, RJ, Brasil
 }
 \email{\{rob|saxton\}@impa.br}

\thanks{Research supported in part by a CNPq bolsa PDJ (DS) and by CNPq Proc.~479032/2012-2 and Proc.~303275/2013-8 (RM)}

\title{The number of $C_{2\ell}$-free graphs}

\begin{abstract}
One of the most basic questions one can ask about a graph $H$ is: how many $H$-free graphs on $n$ vertices are there? For non-bipartite $H$, the answer to this question has been well-understood since 1986, when Erd\H{o}s, Frankl and R\"odl proved that there are $2^{(1 + o(1)) \ex(n,H)}$ such graphs. For bipartite graphs, however, much less is known: even the weaker bound $2^{O(\ex(n,H))}$ has been proven in only a few special cases: for cycles of length four and six, and for some complete bipartite graphs. 

For even cycles, Bondy and Simonovits proved in the 1970s that $\ex(n,C_{2\ell}) = O\big( n^{1 + 1/\ell} \big)$, and this bound is conjectured to be sharp up to the implicit constant. In this paper we prove that the number of $C_{2\ell}$-free graphs on $n$ vertices is at most $2^{O(n^{1 + 1/\ell})}$, confirming a conjecture of Erd\H{o}s. Our proof uses the hypergraph container method, which was developed recently (and independently) by Balogh, Morris and Samotij, and by Saxton and Thomason, together with a new `balanced supersaturation theorem' for even cycles. We moreover show that there are at least $2^{(1 + c)\ex(n,C_6)}$ $C_6$-free graphs with $n$ vertices for some $c > 0$ and infinitely many values of~$n \in \N$, disproving a well-known and natural conjecture. As a further application of our method, we essentially resolve the so-called Tur\'an problem on the Erd\H{o}s-R\'enyi random graph $G(n,p)$ for both even cycles and complete bipartite graphs. 
\end{abstract}

\maketitle

\section{Introduction}

One of the central challenges in graph theory is to determine the extremal and typical properties of the family of $H$-free graphs on $n$ vertices. For non-bipartite graphs, an enormous amount of progress has been made on this problem; for bipartite graphs, on the other hand, surprisingly little is known. For example, the extremal number $\ex(n,H)$ (the maximum number of edges in an $H$-free graph on $n$ vertices) was determined asymptotically for all non-bipartite $H$ over 60 years ago, but, despite much effort, even its order of magnitude is known for only a handful of bipartite graphs. A significantly harder question asks: how many $H$-free graphs are there with $n$ vertices? In particular, Erd\H{o}s asked more than thirty years ago (see, e.g.,~\cite{KW82}) whether or not the number of such graphs is at most $2^{O(\ex(n,H))}$ for every bipartite graph $H$, but the answer is known in only a few special cases, see~\cite{BSmm,BSst,KW96,KW82}. 

In this paper we prove that the number of $C_{2\ell}$-free graphs is at most $2^{O(n^{1 + 1/\ell})}$, confirming a longstanding conjecture of Erd\H{o}s. Our method is very general, and is likely to apply to various other classes of bipartite graphs; in particular, we show that a similar bound holds for any bipartite graph which has a certain `balanced supersaturation' property. We also essentially resolve the Tur\'an problem on the Erd\H{o}s-R\'enyi random graph $G(n,p)$ for both even cycles and complete bipartite graphs, obtaining close to best possible bounds for all values of $p$. Finally, we show that the natural conjecture (often attributed to Erd\H{o}s) that the number of $H$-free graphs on $n$ vertices is $2^{(1 + o(1))\ex(n,H)}$ fails for $H = C_6$. 

\subsection{History and background} 

The study of extremal graph theory was initiated roughly 70 years ago by Tur\'an~\cite{T41}, who determined exactly the extremal number of the complete graph, by Erd\H{o}s and Stone~\cite{ES46}, who determined asymptotically (for all $r \ge 3$) the extremal number of a complete $r$-partite graph\footnote{As was first pointed out by Erd\H{o}s and Simonovits~\cite{ES66}, this is sufficient to determine $\ex(n,H)$ asymptotically for every non-bipartite graph $H$.}, and by K\H{o}v\'ari, S\'os and Tur\'an~\cite{KST} who showed that $\ex(n,K_{s,t}) = O(n^{2 - 1/s})$, where $K_{s,t}$ denotes the complete bipartite graph with part sizes~$s$ and~$t$. (The case $K_{2,2} = C_4$ was solved some years earlier by Erd\H{o}s~\cite{E38} during his study of multiplicative Sidon sets.) Over the following decades, a huge amount of effort was put into determining more precise bounds for specific families of graphs (see, e.g.,~\cite{MGT,FuS}), and a great deal of progress has been made. Nevertheless, the order of magnitude of $\ex(n,H)$ for most bipartite graphs, including simple examples such as $K_{4,4}$ and $C_8$, remains unknown.

\enlargethispage{\baselineskip}

In the 1970s, the problem of determining the number of $H$-free graphs on $n$ vertices was introduced by Erd\H{o}s, Kleitman and Rothschild~\cite{EKR}, who proved that there are $2^{(1 + o(1))\ex(n,K_r)}$ $K_r$-free graphs, and moreover that almost all triangle-free graphs are bipartite. This latter result was extended to all cliques by Kolaitis, Pr\"omel and Rothschild~\cite{KPR} and to more general graphs by Pr\"omel and Steger~\cite{PS}, and the former to all non-bipartite graphs by Erd\H{o}s, Frankl and R\"odl~\cite{EFR}, using Szemer\'edi's regularity lemma. The corresponding result for $k$-uniform hypergraphs was proved by Nagle, R\"odl and Schacht~\cite{NRS} using hypergraph regularity, and reproved by Balogh, Morris and Samotij~\cite{BMS} and Saxton and Thomason~\cite{ST} using the hypergraph container method (see below). Much more precise results for graphs were obtained by Balogh, Bollob\'as and Simonovits~\cite{BBS1,BBS2,BBS3}.

For bipartite $H$ the problem seems to be significantly harder, and much less is known. The first important breakthrough was made by Kleitman and Winston~\cite{KW82} in 1982, who showed that there are at most $2^{(1+c)\ex(n,C_4)}$ $C_4$-free graphs on $n$ vertices, where $c \approx 1.17$, improving the trivial upper bound of $n^{\ex(n,C_4)}$, and getting within striking distance of the trivial lower bound $2^{\ex(n,C_4)}$. Their result moreover resolved a longstanding open question posed by Erd\H{o}s (see~\cite{KW82}). However, it was not until almost 30 years later that their theorem was extended to other complete bipartite graphs, by Balogh and Samotij~\cite{BSmm,BSst}, who proved, for every $2 \le s \le t$, that there are at most $2^{O(n^{2 - 1/s})}$ $K_{s,t}$-free graphs on $n$ vertices. Their bound is conjectured to be sharp up to the constant implicit in the $O(\cdot)$, but constructions giving a matching lower bound are known only when either $s \in \{2,3\}$ or $t > (s-1)!$, see~\cite{ARS,Brown,ERS,F96,KRS}.

For other (i.e., non-complete) bipartite graphs, the only known bounds of this form are for forests, where the problem is much easier, and for even cycles of length six and eight. Recall that $\ex(n,C_{2\ell}) = O(n^{1 + 1/\ell})$ for every $\ell \ge 2$.\footnote{The first published proof of this bound was given by Bondy and Simonovits~\cite{BS74}, but they attribute the result to Erd\H{o}s, see also~\cite{E64} and~\cite[Theorem~4.6]{FuS}. For more recent improvements, see~\cite{BZ,P12,V00}.} Erd\H{o}s and Simonovits conjectured (see~\cite{E64} or~\cite{FuS}) that this bound is sharp up to the implied constant factor, but matching lower bounds are known only for $C_4 = K_{2,2}$ (see above), $C_6$ and $C_{10}$ (see~\cite{Benson,FNV,LUW,W91}). It was therefore natural for Erd\H{o}s to conjecture that, for every $\ell \ge 2$, the number of $C_{2\ell}$-free graphs is at most $2^{O(n^{1+1/\ell})}$, and indeed Kleitman and Wilson~\cite{KW96} proved this in the cases $\ell = 3$ and $\ell = 4$, using a clever colouring argument to reduce the problem to that solved in~\cite{KW82}. They (and independently Kreuter~\cite{Kreuter}, see also~\cite{KKS}) moreover proved that there are $2^{O(n^{1+1/\ell})}$ graphs with no even cycles of length \emph{at most} $2\ell$. However, they were unable to resolve the case of a single forbidden long even cycle, and no further progress has been made in the decade and a half since.

\subsection{Main results}

In this paper we resolve this longstanding open problem for all even cycles, using a very general method, which we expect to give similar bounds for many other bipartite graphs. More precisely, we shall prove the following theorem.

\begin{thm} \label{thm:main}
For every $\ell \ge 2$, there are at most $2^{O(n^{1+1/\ell})}$ $C_{2\ell}$-free graphs on $n$ vertices.
\end{thm}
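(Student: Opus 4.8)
The plan is to combine the hypergraph container method with a suitable supersaturation result for even cycles. The relevant hypergraph is the one whose vertex set is the edge set of $K_n$ and whose edges are the copies of $C_{2\ell}$; an independent set in this hypergraph is precisely a $C_{2\ell}$-free graph. The container lemma produces a family of at most $2^{o(n^{1+1/\ell})}$ ``container'' graphs, each of which is almost $C_{2\ell}$-free (contains few copies of $C_{2\ell}$), such that every $C_{2\ell}$-free graph is a subgraph of some container. To conclude, I would need two things: first, that each container has at most $O(n^{1+1/\ell})$ edges (or at least that one can iterate the container process to reach this point), so that the number of subgraphs of a single container is at most $2^{O(n^{1+1/\ell})}$; and second, that the number of containers is not too large.

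The crux is verifying the ``co-degree'' or spreadness hypotheses that the container lemma demands, uniformly along the iteration, and this is exactly where a naive supersaturation bound fails. Standard supersaturation says a graph with $C n^{1+1/\ell}$ edges contains many copies of $C_{2\ell}$, but the container method needs more: it needs that the copies of $C_{2\ell}$ are spread out, i.e. that no small set of edges is contained in an unusually large fraction of them, so that the hypergraph has small co-degrees in the appropriate averaged sense. The main obstacle, and the genuinely new ingredient the paper advertises, is the ``refined supersaturation theorem'' for even cycles: one must show that any graph $G$ with sufficiently many edges contains a large family of copies of $C_{2\ell}$ such that every edge of $G$ lies in only a bounded number of them (equivalently, one can find many nearly edge-disjoint copies, or a weighted family with bounded local multiplicity). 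This is delicate because the number of copies of $C_{2\ell}$ through a fixed edge can be very large in dense or irregular graphs, so one has to pass to a subgraph or reweight to control these local counts while retaining a global supersaturation guarantee.

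Concretely I would proceed as follows. Step one: state and prove the refined supersaturation theorem — given $G$ on $n$ vertices with at least $C n^{1+1/\ell}$ edges, produce a collection $\mathcal{C}$ of copies of $C_{2\ell}$ in $G$ with $|\mathcal{C}| \ge \delta |E(G)| \cdot (\text{something})$ and such that each edge and each small edge-set is in few members of $\mathcal{C}$; the proof would likely go by first regularizing $G$ (passing to a subgraph of large minimum degree, or a bipartite subgraph), then running a breadth-first-search / path-counting argument à la Bondy–Simonovits to locate cycles, and carefully tracking how often each edge is reused — perhaps deleting high-multiplicity edges and arguing the deletions do not destroy too many cycles. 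Step two: feed this into the container lemma. The refined count translates into the statement that the $C_{2\ell}$-hypergraph restricted to any dense enough graph has bounded co-degree function, which is precisely the container hypothesis; one application of the lemma shrinks any candidate set, and iterating a bounded number of times drives the container size down to $O(n^{1+1/\ell})$ edges. Step three: count — the number of containers produced is $2^{o(n^{1+1/\ell})}$ and the number of $C_{2\ell}$-free subgraphs of each is at most $2^{O(n^{1+1/\ell})}$, giving the theorem.

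The step I expect to be by far the hardest is the refined supersaturation theorem itself: ordinary supersaturation for $C_{2\ell}$ is already subtle (it does not follow from a simple averaging over the Bondy–Simonovits bound), and upgrading it to a version with controlled edge-multiplicities, robust enough to survive repeated application of the container lemma on progressively sparser host graphs, is where essentially all the new work lies. The container-method bookkeeping (choosing the iteration parameters, verifying the co-degree conditions match the lemma's hypotheses, bounding the number of fingerprints) is technically involved but follows a by-now-established template once the refined supersaturation input is in hand.
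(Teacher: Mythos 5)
Your proposal follows essentially the same route as the paper: the same hypergraph encoding of copies of $C_{2\ell}$ on vertex set $E(G)$, the same iterated application of the container lemma until containers have $O(n^{1+1/\ell})$ edges, and the same identification of the refined supersaturation theorem (many copies of $C_{2\ell}$ with co-degree control on all small edge-sets, not just single edges) as the essential new ingredient, which is exactly the paper's Theorem~\ref{thm:cycle:hypergraph}. The only caveat is that your sketch of that theorem's proof is far rougher than the actual argument (which builds the hypergraph one cycle at a time via concentrated, balanced and refined neighbourhoods rather than by deleting high-multiplicity edges), but you correctly flag it as where all the work lies.
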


As noted above, it is generally believed that the bound in Theorem~\ref{thm:main} is sharp up to the constant implicit in the $O(\cdot)$, but this is only known in the cases $\ell \in \{ 2, 3, 5 \}$. Theorem~\ref{thm:main} is an immediate consequence of the following result, which gives a rough structural description of $C_{2\ell}$-free graphs. 

\begin{thm}\label{thm:cycle:containers}
Given $\ell \ge 2$ and $\delta > 0$, there exists a constant $C = C(\delta,\ell)$ such that the following holds for every sufficiently large $n \in \N$.  There exists a collection~$\G$ of at most
$$2^{\delta n^{1 + 1/\ell}}$$
graphs on vertex set~$[n]$ such that
$$e(G) \le C n^{1+1/\ell}$$ 
for every $G \in \G$, and every $C_{2\ell}$-free graph is a subgraph of some~$G \in \G$. 
\end{thm}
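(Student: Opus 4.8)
The plan is to derive Theorem~\ref{thm:cycle:containers} by applying the hypergraph container method to the natural hypergraph whose vertices are the edges of $K_n$ and whose (hyper)edges are the copies of $C_{2\ell}$. A single application of a container theorem produces, for each independent set (i.e.\ $C_{2\ell}$-free graph) $G$, a ``container'' $A \supseteq G$ with the property that $A$ spans few copies of $C_{2\ell}$; the number of containers is controlled by the number of edges removed in the container construction, which in turn is governed by a supersaturation estimate. A single step only shrinks the number of edges in a container from $\binom{n}{2}$ down to something like $n^{2-\gamma}$ for a small $\gamma$, so the real work is to \emph{iterate}: feed each container back into the machine and repeat. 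After $O_\ell(1)$ iterations one reaches containers with $O(n^{1+1/\ell})$ edges, which is the target; the total number of containers produced over all stages is at most $2^{\delta n^{1+1/\ell}}$ provided each stage contributes only $2^{\delta' n^{1+1/\ell}}$ new choices, with $\delta'$ absorbing the geometric number of stages.

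The first key step is therefore to formulate the right container lemma and, crucially, the right supersaturation input. The standard bound on the number of copies of $C_{2\ell}$ in a graph with $m \gg n^{1+1/\ell}$ edges is too weak for the container bookkeeping, because copies of $C_{2\ell}$ can be badly clustered on a sparse, highly structured subgraph (e.g.\ a blow-up of a short even cycle), which prevents the ``co-degree'' conditions needed to keep the container fingerprints small. The second, and main, ingredient is the ``refined supersaturation theorem'' advertised in the abstract: I would prove that in any graph with $m \ge C n^{1+1/\ell}$ edges, one can find a large \emph{family} of copies of $C_{2\ell}$ that is spread out — every edge, and more importantly every pair of edges, lies in a controlled number of the chosen copies — so that the associated hypergraph of chosen copies satisfies a bounded-codegree (or bounded $\Delta_j$) condition of the type required by the container lemmas of~\cite{BMS,ST}. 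Establishing this refined supersaturation is the heart of the argument: one needs to simultaneously guarantee \emph{many} copies and \emph{low multiplicity}, presumably by first passing to a subgraph where degrees are roughly regular, using a Bondy–Simonovits / BFS-tree type analysis to locate cycles through prescribed edges, and then a weight-shifting or deletion argument to kill the edges (or pairs) that are over-represented.

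Granting the refined supersaturation theorem, the rest is assembly. At each stage $i$, with current upper bound $m_i$ on the number of edges of the container, I apply the container lemma using the refined family of $C_{2\ell}$'s; this partitions $C_{2\ell}$-free graphs among at most $\exp\!\big(O(m_i / m_i^{c})\big)$ new containers, each with at most $m_{i+1} \le m_i^{1-c'}$ edges, for absolute constants $c, c' > 0$ depending on $\ell$; one checks that $m_i/m_i^c$ never exceeds $\delta' n^{1+1/\ell}$ and that $m_i$ decreases to $O(n^{1+1/\ell})$ after a bounded number of steps, at which point $\ex(n,C_{2\ell}) = O(n^{1+1/\ell})$ (Bondy–Simonovits) terminates the recursion. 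Multiplying the per-stage container counts gives the bound $2^{\delta n^{1+1/\ell}}$ on $|\G|$ after choosing $\delta'$ small relative to $\delta$ and the number of stages, and every $C_{2\ell}$-free graph ends up inside some $G \in \G$ with $e(G) = O(n^{1+1/\ell})$; Theorem~\ref{thm:main} then follows by summing $\binom{Cn^{1+1/\ell}}{\le e(H)}$ over subgraphs $H$ of each container. The main obstacle, to reiterate, is the refined supersaturation statement — controlling codegrees of $C_{2\ell}$-copies while keeping their number proportional to the edge count — and getting its quantitative dependence strong enough that the iteration closes.
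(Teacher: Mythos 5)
Your proposal is essentially the paper's proof: a refined supersaturation theorem giving a spread-out family of copies of $C_{2\ell}$ with codegree (i.e.\ $d_\HH(\sigma)$) control, fed into the hypergraph container lemma of~\cite{BMS,ST}, and then iterated from $K_n$ down to containers with $O(n^{1+1/\ell})$ edges; the refined supersaturation (Theorem~\ref{thm:cycle:hypergraph}) is indeed proved by a Bondy--Simonovits-style neighbourhood analysis and is the technical heart of the argument. One correction to your assembly step: the container lemma only guarantees $e(C)\le(1-\eps)e(G)$ for each new container, not $m_{i+1}\le m_i^{1-c'}$, so the iteration takes $\Theta(\log n)$ rounds rather than $O_\ell(1)$. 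The count still closes, but not because each stage contributes a fixed $\exp(\delta' n^{1+1/\ell})$; rather, a container with $k\,n^{1+1/\ell}$ edges spawns only $\exp\big(O(k^{-1/(\ell-1)}n^{1+1/\ell}\log k)\big)$ new containers, these contributions form a rapidly increasing geometric series over the stages dominated by the final one, and choosing the terminal density $k=k(\delta,\ell)$ to be a sufficiently large constant makes that final term at most $2^{\delta n^{1+1/\ell}}$.
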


We remark that we shall in fact prove a substantial generalization of Theorem~\ref{thm:cycle:containers}, which will provide us with close to optimal family of  `containers' of any given size, see Theorem~\ref{thm:cycle:containers:turan}.

A closely related structural question asks: how many edges does a typical $H$-free graph on $n$ vertices have? Balogh, Bollob\'as and Simonovits~\cite{BBS3} conjectured that there exists a constant $c > 0$ such that, if $H$ contains a cycle, then almost every $H$-free graph on $n$ vertices has between $c \cdot \ex(n,H)$ and $(1 - c) \ex(n,H)$ edges. This is only known for some complete bipartite graphs~\cite{BSC4,BSst,KW82} and for $C_6$~\cite{KW96} (as usual, much more is known for non-bipartite graphs). The following bound is an immediate consequence of Theorem~\ref{thm:cycle:containers}.

\begin{thm}\label{cor:fewwithfewedges}
The  number of $C_{2\ell}$-free graphs on $n$ vertices with $o\big( n^{1 + 1/\ell} \big)$ edges is $2^{o(n^{1 + 1/\ell})}$.
\end{thm}

\enlargethispage{\baselineskip}

Under the additional assumption that $\ex(n,C_{2\ell}) = \Omega( n^{1 + 1/\ell} )$, this implies that almost all $C_{2\ell}$-free graphs have $\Omega( n^{1 + 1/\ell} )$ edges. Theorem~\ref{cor:fewwithfewedges} may therefore be seen as evidence in favour of the conjecture of Balogh, Bollob\'as and Simonovits.

Another very strong conjecture, often attributed to Erd\H{o}s (see,~e.g.,~\cite{BBS3}), and mentioned explicitly\footnote{More precisely, they said that it ``seems likely" that this holds for every bipartite graph $H$.} by Erd\H{o}s, Frankl and R\"odl~\cite{EFR}, states that the number of $H$-free graphs on $n$ vertices is $2^{(1 + o(1))\ex(n,H)}$ for every graph $H$ that contains a cycle. Recall that it was proved in~\cite{EFR} that this holds for all non-bipartite $H$.

\pagebreak

We shall prove the following proposition, which disproves this conjecture for $C_6$. 

\begin{prop}\label{prop:counterexample}
There exists a constant $c > 0$ such that there are at least
$$2^{(1 + c) \ex(n,C_6)}$$
$C_6$-free graphs on $n$ vertices for infinitely many values of $n \in \N$. 
\end{prop}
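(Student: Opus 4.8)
The plan is to prove something slightly stronger: that for infinitely many $n$ there are at least $2^{\gamma n^{4/3}}$ $C_6$-free graphs on $n$ vertices, where $\gamma$ is an absolute constant strictly larger than the best known constant $c_0$ in the upper bound $\ex(n,C_6) \le (c_0 + o(1))\,n^{4/3}$ (which goes back to Bondy--Simonovits, with the current record held by F\"uredi, Naor and Verstra\"ete). Since every $C_6$-free graph on $[n]$ has at most $\ex(n,C_6)$ edges, this immediately gives
$$2^{\gamma n^{4/3}} \;\ge\; 2^{(1+c)\,\ex(n,C_6)} \qquad \text{with } c = \frac{\gamma}{c_0} - 1 > 0$$
for all large $n$, which is the proposition; in particular one never needs to know $\ex(n,C_6)$ exactly.

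To produce this many $C_6$-free graphs I would fix a near-extremal $C_6$-free \emph{template} $G^*$ on $n$ vertices --- say the incidence graph of a generalized quadrangle $GQ(q,q)$ when $n \asymp q^3$, or, for the best constant, a F\"uredi--Naor--Verstra\"ete construction with $e(G^*) \ge (0.53 - o(1))\,n^{4/3}$ --- and restrict to the corresponding values of $n$. Taking all subgraphs of $G^*$ yields $2^{\ex(n,C_6)}$ graphs, but that is exactly the trivial bound. The idea is instead to exhibit a family $\mathcal{G}$ of $2^{\Omega(n^{4/3})}$ near-extremal $C_6$-free graphs that is \emph{well separated}: any two members share at most a $(1-\delta)$ fraction of $\ex(n,C_6)$ edges. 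Such a family can be built either by locating inside $G^*$ a set of $\Omega(n^{4/3})$ local ``switchings'' --- edge replacements preserving $C_6$-freeness and the edge count --- that can be performed in an exponentially large collection of jointly compatible ways, or by arranging a bipartite framework in which each of $\Theta(n)$ vertices independently selects its neighbourhood from a rich family of $2^{\Omega(n^{1/3})}$ admissible sets, so that most choice profiles are $C_6$-free; passing to a subfamily of large pairwise symmetric difference (as in the Gilbert--Varshamov bound) we may assume $\mathcal{G}$ is well separated and still of size $2^{\Omega(n^{4/3})}$. Finally one counts the distinct $C_6$-free graphs arising as subgraphs with at least $(1-\varepsilon)\ex(n,C_6)$ edges of members of $\mathcal{G}$: choosing $\varepsilon < \delta$ annihilates the pairwise intersection terms in inclusion--exclusion, so the count is at least $|\mathcal{G}| \cdot 2^{(H(\varepsilon)-o(1))\,\ex(n,C_6)}$, with $H$ the binary entropy function, and one calibrates the switchings (or the family of admissible neighbourhoods) so that the exponent, divided by $n^{4/3}$, exceeds $c_0$.

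The crux --- and essentially all of the work --- is the \emph{$2^{\ex}$ barrier}: because a $C_6$-free graph on $[n]$ has at most $\ex(n,C_6)$ edges, the graphs we construct cannot all sit inside one graph, so $\mathcal{G}$ must genuinely spread out, and the two requirements above must be met simultaneously. First, each modified graph must really be $C_6$-free: one has to rule out all the ``mixed'' $6$-cycles threading edges through several of the switched spots (or through several of the chosen neighbourhoods), and this is delicate precisely because near-extremal $C_6$-free graphs have bounded diameter, hence a great many short paths, so a careless modification instantly creates a forbidden cycle --- this is what dictates where and how the modifications may be placed. Second, one needs enough mutually independent modifications that $\log_2|\mathcal{G}|$ together with the entropy term $H(\varepsilon)\,\ex(n,C_6)$ clears $c_0 n^{4/3}$, a genuine quantitative constraint where the specific numerics of the construction are spent. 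A last, routine point would be to descend from the special values of $n$ admitted by the template to all large $n$, costing only a $1+o(1)$ factor in the exponent; but since the proposition only asks for infinitely many $n$, even this is unnecessary.
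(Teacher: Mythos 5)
There is a genuine gap: the entire content of the proposition is the construction of a family of more than $2^{\ex(n,C_6)}$ distinct $C_6$-free graphs, and your proposal never actually produces one. The reduction in your first paragraph (exhibit $2^{\gamma n^{4/3}}$ such graphs with $\gamma$ exceeding the constant $c_0$ in the upper bound $\ex(n,C_6)\le (c_0+o(1))n^{4/3}$) is correct, and your framing of the obstruction -- the family cannot sit inside a single near-extremal host, and local modifications of dense $C_6$-free graphs tend to create mixed $6$-cycles -- is accurate. But both of your proposed routes (a set of $\Omega(n^{4/3})$ compatible switchings, or $\Theta(n)$ vertices each independently choosing among $2^{\Omega(n^{1/3})}$ admissible neighbourhoods with ``most choice profiles $C_6$-free'') are stated as hoped-for outcomes rather than constructed, and you acknowledge that verifying $C_6$-freeness of the combined choices is ``essentially all of the work.'' The subsequent entropy/separation bookkeeping also adds a constraint you cannot discharge: with subgraphs of edge-deficiency $\eps$ contributing only $2^{H(\eps)\ex}$ graphs each, you would still need $\log_2|\mathcal{G}|\ge(1+c-H(\eps))\ex(n,C_6)$ for a well-separated family, which is no easier than the original problem. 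Finally, whether any calibration ``clears $c_0$'' is a delicate numerical question that your sketch leaves entirely open.

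The paper resolves exactly these two issues with one concrete device. It takes the F\"uredi--Naor--Verstra\"ete graph $G$ on $n/3$ vertices, which is simultaneously $C_6$-free \emph{and triangle-free} and has more than $0.5338\,(n/3)^{4/3}$ edges, blows each vertex up to a triple, and replaces each edge of $G$ independently by one of the $34$ matchings of $K_{3,3}$. Every resulting graph is $C_6$-free because a $6$-cycle in the blow-up projects to a closed non-backtracking walk of length $6$ in $G$, which must contain a $C_6$ or a cycle of length at most $3$ -- both excluded. This gives $34^{e(G)}$ distinct $C_6$-free graphs directly (no subgraph/entropy layer is needed), and the count succeeds because $\log_2(34)\cdot 0.5338/3^{4/3}\approx 0.6277$ just exceeds the FNV upper bound constant $0.6272$; the authors themselves remark that this numerical coincidence is what makes the argument go through. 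Your ``independent neighbourhood choices'' idea is the right spirit, but without the triangle-free template and the non-backtracking-walk lemma (or some substitute mechanism) the proof is not there.
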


We will prove Proposition~\ref{prop:counterexample} using bounds on $\ex(n,C_6)$ due to F\"uredi, Naor and Verstra\"ete~\cite{FNV}. However, we will also prove a similar result for various forbidden families of short cycles, using only the Erd\H{o}s-Bondy-Simonovits bound on the extremal number $\ex(n,C_{2\ell})$. For example, we will give an extremely simple proof that if $\F = \{K_3,C_6\}$, then there are at least $2^{(1+c) \ex(n,\F)}$ $\F$-free graphs on $n$ vertices for infinitely many values of $n$. We conjecture that a similar result holds for all even cycles of length at least six, but our method fails (though not by much!) for $C_4$, and so we are not sure whether or not to expect the conjecture to hold for this and other complete bipartite graphs. It is not inconceivable that the number of $H$-free graphs on $n$ vertices is $2^{(1 + o(1))\ex(n,H)}$ for every $H$ such that $\ex(n,H) = \Omega(n^{3/2})$. 

\subsection{Balanced supersaturation for even cycles, and hypergraph containers}

\enlargethispage{\baselineskip}
\enlargethispage{\baselineskip}

We shall prove Theorem~\ref{thm:cycle:containers} using the hypergraph container method, which was introduced recently by Balogh, Morris and Samotij~\cite{BMS} and by Saxton and Thomason~\cite{ST}. This technique, which allows one to find a relatively small family of sets (`containers') which cover the independent sets in an $r$-uniform hypergraph, has already found many applications; for example, it implies deterministic analogues of the recent breakthrough results of Conlon and Gowers~\cite{CG} and Schacht~\cite{Schacht} on extremal results in sparse random sets, and in many cases proves stronger `counting' versions of those theorems. In order to apply this method, we need to bound, for every graph $G$ with $\gg \ex(n,H)$ edges\footnote{In this paper we shall use the notation $a \gg b$ informally to mean that $a/b$ is bounded from below by a sufficiently large constant.}, a particular parameter (see Section~\ref{sec:containers}) of the hypergraph which encodes copies of our forbidden graph $H$ in $G$. Bounding this parameter in the case $H = C_{2\ell}$ is the main technical challenge of this paper.

The following `balanced supersaturation theorem for even cycles' is our second main result. The existence of a collection satisfying part~$(a)$ was proved\footnote{The proof of this `supersaturation result for even cycles' was not published at the time, but will appear shortly in a paper of Faudree and Simonovits~\cite{FS}.} by Simonovits (see~\cite{ES84}), and was conjectured by Erd\H{o}s and Simonovits~\cite{ES84} to exist for every bipartite graph $H$. The condition in part~$(b)$ is new, and is crucial to our application of the container method. 

\begin{thm}\label{thm:cycle:hypergraph}
For every $\ell \ge 2$, there exist constants $C > 0$, $\delta > 0$ and $k_0 \in \N$ such that the following holds for every $k \ge k_0$ and every $n \in \N$. Given a graph~$G$ with~$n$ vertices and $k n^{1+1/\ell}$ edges, there exists a collection~$\HH$ of copies of $C_{2\ell}$ in $G$, satisfying:
\begin{itemize}
 \item[$(a)$] $|\HH|  \ge \delta k^{2\ell} n^2$, and\smallskip
 \item[$(b)$] $d_\HH(\sigma) \le C \cdot k^{2\ell - |\sigma| - \frac{|\sigma| - 1}{\ell-1}} n^{1 - 1/\ell}$ for every $\sigma \subset E(G)$ with $1 \le |\sigma| \le 2 \ell-1$,\smallskip
\end{itemize}
where $d_\HH(\sigma) = | \{ A \in \HH \,:\, \sigma \subset A \} |$ denotes the `degree' of the set $\sigma$ in $\HH$.
\end{thm}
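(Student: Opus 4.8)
The plan is to build $\HH$ by taking, for each copy of $C_{2\ell}$ in $G$, the corresponding $2\ell$-element subset of $E(G)$ as an edge, and then to verify the two quantitative properties. Part~$(a)$ is the classical even-cycle supersaturation statement: since $G$ has $k n^{1+1/\ell} \gg \ex(n, C_{2\ell})$ edges, a standard argument (delete a vertex of minimum degree repeatedly down to $\sim n/2$ vertices, each of which still spans $\Omega(k n^{1+1/\ell})$ edges and hence, by Bondy--Simonovits applied at the threshold, contains a copy of $C_{2\ell}$; count with multiplicity) gives $\Omega\big(k^{2\ell} n^2\big)$ copies of $C_{2\ell}$, which is the target number of edges up to adjusting $\delta$. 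The real content is part~$(b)$: I must control, for every set $\sigma$ of $|\sigma|$ edges of $G$ with $1 \le |\sigma| \le 2\ell-1$, the number of copies of $C_{2\ell}$ containing all the edges in $\sigma$, and show this is at most $C \cdot k^{2\ell - |\sigma| - (|\sigma|-1)/(\ell-1)} n^{1-1/\ell}$.

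To prove~$(b)$, fix $\sigma \subset E(G)$ with $|\sigma| = s$. If $\sigma$ is not a union of vertex-disjoint paths lying inside a single cycle of length $2\ell$, then $d_\HH(\sigma) = 0$, so assume $\sigma$ consists of some number $j$ of paths with a total of $s$ edges (so $s + j$ vertices used); a copy of $C_{2\ell}$ through $\sigma$ is obtained by choosing how to interleave the remaining $2\ell - s - j$ vertices among the $j$ ``gaps'' between the path-ends around the cycle, and then choosing an actual walk of the prescribed lengths in $G$ connecting the prescribed endpoints. The number of combinatorial interleavings is a bounded constant (depending on $\ell$), so it suffices to bound, for each way of breaking $2\ell - s$ into the $j$ required path-lengths $a_1, \dots, a_j \ge 1$ summing to $2\ell-s$, the number of ways to extend to a cycle. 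Here I would invoke a counting lemma for the number of paths of a given length between two fixed vertices in a graph with $m$ edges on $n$ vertices: a graph with $\Theta(k n^{1+1/\ell})$ edges has at most roughly $O\big(k^{a} n^{a/\ell - 1}\big)$ paths of length $a$ between any fixed ordered pair of vertices for $a \le \ell$ (and a correspondingly cruder bound, or a reduction to shorter paths, when $a > \ell$). Multiplying these bounds over the $j$ gaps, using $\sum a_i = 2\ell - s$ and the constraint $j \le \min(s, \ell-1)$ coming from the fact that a $C_{2\ell}$ with an embedded $C_{2\ell}$-arc structure can have at most $\ell-1$ gaps when $s \ge 1$ (since gaps and path-segments alternate and there are $2\ell$ edges total with at least one gap of length $\ge 1$ unless... ), should yield exactly the exponent $2\ell - s - (s-1)/(\ell-1)$ on $k$ and $1 - 1/\ell$ on $n$, after optimizing over the partition $(a_1,\dots,a_j)$ — the worst case being when the path-lengths are as unbalanced as possible, concentrating length in one gap.

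The main obstacle, and the place where genuine work is needed, is the path-counting lemma itself and making the exponent bookkeeping come out sharp. Counting paths of length $a$ between two vertices is easy for $a = 1$ (at most one, or the multiplicity) and for $a = 2$ (bounded by a codegree, controlled by $\ex(n, C_4)$-type arguments), but for longer paths one cannot simply iterate ``multiply by the maximum degree'' — that loses far too much. Instead I expect to need an inductive/averaging scheme: bound the number of length-$a$ paths from $u$ to $v$ by summing over the midpoint (or over the first edge) and applying the bound for length $\lceil a/2\rceil$ and $\lfloor a/2\rfloor$, together with the global edge count $\sum_x d(x) = 2m$ and convexity, in the spirit of the Bondy--Simonovits moment computation; the key point is that $G$ being dense forces these path-counts up, but only polynomially, and the precise polynomial is dictated by the extremal exponent $1 + 1/\ell$. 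Getting the dependence on $|\sigma|$ right — in particular the $-(|\sigma|-1)/(\ell-1)$ term, which reflects that each additional fixed edge of $\sigma$ that merges two gaps ``costs'' a factor $k^{1 + 1/(\ell-1)}$ rather than just $k$ — will require carefully tracking which configurations of $\sigma$ are extremal, and is where I anticipate spending most of the effort. A clean way to organize this is to prove~$(b)$ first for $\sigma$ a single path (the case $j=1$) by a direct path-counting argument, and then reduce the general case to the single-path case by a gluing/union bound over the $O_\ell(1)$ interleavings, at each step checking that the exponents add up correctly.
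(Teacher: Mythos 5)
Your plan founders at the very first step: you propose to let $\HH$ consist of \emph{all} copies of $C_{2\ell}$ in $G$ and then verify the degree bound~$(b)$ for that hypergraph. But~$(b)$ is simply false for the hypergraph of all copies, and no path-counting lemma can rescue it. For instance, add to an arbitrary graph with $kn^{1+1/\ell}$ edges a clique on $\sqrt{n}$ vertices (this costs only $O(n)$ edges, so the hypotheses are unaffected); a single edge of that clique lies in $\Theta(n^{\ell-1})$ copies of $C_{2\ell}$, vastly exceeding the required bound $C k^{2\ell-1}n^{1-1/\ell}$. Likewise the counting lemma you hope for --- at most $O(k^{a}n^{a/\ell-1})$ paths of length $a$ between two fixed vertices --- already fails for $a=2$: codegrees in a graph with $kn^{1+1/\ell}$ edges can be as large as $n-2$. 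This is exactly why the theorem only asks that $\HH$ encode \emph{some subset} of the copies of $C_{2\ell}$, and why the paper's proof is structured entirely differently: it builds $\HH$ one cycle at a time, declaring a set of edges ``saturated'' once its degree reaches the cap $\Delta^{(|\sigma|)}(k,n)$, and the whole difficulty (Proposition~\ref{prop:finding:cycles}, via concentrated, balanced and refined neighbourhoods) is to show that as long as $e(\HH)<\delta k^{2\ell}n^2$ one can always find a further copy of $C_{2\ell}$ avoiding every saturated set. The freedom to discard copies is not a convenience but the essential mechanism.

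A secondary gap: your sketch of part~$(a)$ is also too optimistic. Deleting minimum-degree vertices and invoking Bondy--Simonovits once per deletion yields only $O(n)$ copies, and even the standard averaging/supersaturation argument (a positive fraction of $m$-subsets contain a copy, for suitable $m$) gives on the order of $(n/m)^{2\ell}\approx k^{2\ell^2/(\ell-1)}$ copies, with no factor of $n^2$; neither comes close to the random-graph count $\delta k^{2\ell}n^2$. The paper obtains this count (and simultaneously the uniformity needed for~$(b)$) by a BFS-type argument: for each vertex $x$ one finds a smallest $t$ at which the $t$-th neighbourhood is ``concentrated'', and builds $\Omega(k^{2\ell}n)$ cycles through $x$ out of two paths from $x$ to $A_t$ joined by a zig-zag path between $A_{t-1}$ and $A_t$. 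If you want a route to the theorem, you must abandon the ``take all copies'' framework and instead design a selection procedure for the cycles that enforces~$(b)$ by fiat while proving that enough legal cycles always remain.
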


In words, the theorem above states that if $e(G) \gg n^{1 + 1/\ell}$, then $G$ contains at least as many copies of $C_{2\ell}$ (up to a constant factor) as a (typical) random graph of the same density, and moreover these copies of $C_{2\ell}$ are relatively `uniformly distributed' over the edges of $G$. We emphasize that $\HH$ does not need to include all copies of $C_{2\ell}$ in $G$, but only a subset.

We make the following conjecture for a general bipartite graph $H$, which follows from Theorem~\ref{thm:cycle:hypergraph} in the case $H = C_{2\ell}$. As noted above, the existence in $G$ of as many copies of $H$ as the Erd\H{o}s-R\'enyi random graph with the same number of edges was conjectured by Erd\H{o}s and Simonovits~\cite{ES84}. Since we ask that these copies of $H$ are moreover reasonably uniformly distributed\footnote{The precise form of~\eqref{eq:conjES}, which quantifies this rough description, is slightly artificial: it is essentially the weakest bound compatible with our proof of Proposition~\ref{prop:conj:implies:thm}, see Section~\ref{sec:proof}. We remark that in many cases one would expect something stronger to be true, cf. the bounds proved in Section~\ref{sec:Kst} in the case $H = K_{s,t}$.}, we shall refer to it as the `balanced Erd\H{o}s-Simonovits conjecture'. 

\begin{conj}[Balanced Erd\H{o}s-Simonovits conjecture for general bipartite $H$]\label{conj:balancedES}
Given a bipartite graph $H$, there exist constants $C > 0$, $\eps > 0$ and $k_0 \in \N$ such that the following holds. Let $k \ge k_0$, and suppose that~$G$ is a graph on~$n$ vertices with $k \cdot \ex(n,H)$ edges. Then there exists a (non-empty) collection~$\HH$ of copies of $H$ in $G$, satisfying
\begin{equation}\label{eq:conjES}
d_\HH(\sigma) \le \ds\frac{C \cdot |\HH|}{k^{(1 + \eps)(|\sigma| - 1)} e(G)} \quad \text{for every $\sigma \subset E(G)$ with $1 \le |\sigma| \le e(H)$.}
\end{equation}
\end{conj}

Our motivation in making this conjecture is the following proposition, see also~\cite{Sax}.

\begin{prop}\label{prop:conj:implies:thm}
Let $H$ be a bipartite graph. If Conjecture~\ref{conj:balancedES} holds for $H$, then there are at most $2^{O(\ex(n,H))}$ $H$-free graphs on $n$ vertices. 
\end{prop}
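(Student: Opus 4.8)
The plan is to derive Proposition~\ref{prop:conj:implies:thm} from Conjecture~\ref{conj:refinedES} by feeding the hypergraphs it supplies into the hypergraph container lemma and iterating. Fix the bipartite graph $H$, write $r = e(H)$ and $m = \ex(n,H)$, and let $\G_0$ be the family consisting of the single complete graph $K_n$. The goal is to build, through repeated application of the container lemma, a family $\G$ of graphs on $[n]$, each with $O(m)$ edges, such that every $H$-free graph is a subgraph of some $G \in \G$, and $|\G| = 2^{O(m)}$; counting subgraphs of the members of $\G$ then gives the claimed bound $2^{O(m)}$ on the number of $H$-free graphs.

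\medskip

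The key steps, in order, are as follows. First, for a graph $G$ on $[n]$ with $e(G) = k \cdot m$ and $k \ge k_0$, apply Conjecture~\ref{conj:refinedES} to obtain an $r$-uniform hypergraph $\HH$ on vertex set $E(G)$ whose edges are copies of $H$ in $G$ and which satisfies the degree bound~\eqref{eq:conjES}. Second, verify that the co-degree function $\delta(\HH)$ appearing in the container lemma (the quantity controlling how ``spread out'' $\HH$ is — roughly $\sum_{j \ge 2} 2^{-\binom{j-1}{2}} \frac{\Delta_j(\HH)}{\Delta_1(\HH)}$, suitably normalized) can be bounded below by a fixed positive power of $k$: the uniform-distribution hypothesis~\eqref{eq:conjES} says $\Delta_j(\HH)/\Delta_1(\HH) \lesssim k^{-\eps(j-1)} (e(\HH)/e(G))^{j-1} \cdot (\text{stuff})$, and after plugging in $\Delta_1(\HH) \gtrsim r \cdot e(\HH)/e(G)$ one extracts a factor $k^{-\Omega(1)}$, so $\delta(\HH) \le k^{-c'}$ for some $c' = c'(H) > 0$. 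Third, invoke the container lemma: since $E(G)$ is ``almost independent'' in $\HH$ — actually $H$-free subgraphs of $G$ are precisely sets containing no edge of $\HH$ restricted to them — the lemma produces a family $\mathcal{C}(G)$ of at most $2^{o(e(G))}$ subsets of $E(G)$, each of size at most $(1 - \xi) e(G)$ for some fixed $\xi = \xi(H) > 0$ (here the gap in size comes from the fact that $\delta(\HH)$ is polynomially small in $k$, so the container lemma's ``$\tau$'' parameter can be taken of the form $k^{-\Omega(1)}/m^{0}$-ish and still beat $e(G)$), such that every $H$-free subgraph of $G$ lies in some container. Fourth, iterate: starting from $\G_0 = \{K_n\}$ with $e(K_n) = \binom{n}{2} = \Theta(n^2/m) \cdot m$, repeatedly replace each $G$ in the current family by the containers $\mathcal{C}(G)$, stopping when $e(G)$ drops to $k_0 \cdot m = O(m)$; since each step multiplies the edge count by at most $(1-\xi)$, the number of rounds is $O(\log(n^2/m)) = O(\log n)$, and the number of containers is multiplied by $2^{o(e(G))}$ each round, giving a final count of $2^{o(\binom{n}{2})} \le 2^{o(n^2)}$ — which is \emph{not} good enough, so one must be more careful.

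\medskip

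This last point is the main obstacle, and it is where the real work lies: a naive geometric iteration loses an unacceptable factor because the early rounds (when $G$ is dense) contribute $2^{o(n^2)}$ containers, far more than $2^{O(m)}$. The standard fix — and the one I would pursue — is to run the container process with a carefully chosen sequence of parameters so that the total number of containers telescopes to $2^{O(m)}$: at each round one chooses $\tau$ (the fraction of edges ``selected'' by the container algorithm) so that $\tau \cdot e(G) = O(m)$ exactly, i.e.\ $\tau = \Theta(m/e(G)) = \Theta(1/k)$, and one checks that the container lemma's hypothesis $\delta(\HH) \le $ (something like $\tau$, up to constants and the $2^{-\binom{j-1}{2}}$ weights) is still satisfied — which is precisely what Step 2 above guarantees, since $\delta(\HH) \le k^{-c'} = o(1/k^{?})$ — wait, one needs $\delta(\HH) \lesssim \tau \asymp 1/k$, so one actually needs $k^{-c'} \lesssim 1/k$, i.e.\ $c' \ge 1$; if $\eps$ is small this may fail, and the resolution is that the container lemma only needs $\delta(\HH) \le \varepsilon_0$ for a small constant and then delivers containers of size $(1-\xi)e(G)$ while adding only $2^{O(\tau e(G) \log(1/\tau))} = 2^{O(m \log k)}$ containers per round; summing the geometric series $\sum_j m \log k_j$ over the $O(\log n)$ rounds, with $k_j$ decreasing geometrically, gives $O(m \log n)$ in the exponent in the worst case, and one then has to either absorb the $\log n$ (sometimes acceptable if one only wants $2^{O(m \log n)}$, but here we want $2^{O(m)}$) or, better, observe that $\sum_j \tau_j e(G_j) \log(1/\tau_j) = \sum_j O(m) \cdot O(\log k_j)$ and that $\sum_j \log k_j$ is dominated by its first term $\log k_0^{\text{init}} = O(\log(n^2/m)) = O(\log n)$ — so honestly the clean statement is $2^{O(m \log n)}$ unless one is cleverer. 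I expect the actual argument in the paper to avoid this loss by taking, at each step, $\tau_j$ \emph{larger} than $1/k_j$ — say $\tau_j = k_j^{-1+c'/2}$, using the full strength $\delta(\HH) \le k_j^{-c'}$ — so that $\tau_j e(G_j) = k_j^{c'/2} m$ and the added containers number $2^{O(k_j^{c'/2} m \cdot \log k_j)}$, which summed over the geometric sequence $k_j \to k_0$ is dominated by the last term $2^{O(k_0^{c'/2} m \log k_0)} = 2^{O(m)}$, while the edge count still shrinks geometrically. Thus the heart of the proof is the bookkeeping that chooses the parameter $\tau_j$ at each round to simultaneously (i) satisfy the container lemma's co-degree hypothesis via~\eqref{eq:conjES}, (ii) guarantee geometric decrease of $e(G)$, and (iii) keep the per-round multiplicative cost at $2^{O(m)}$ with the sum over rounds still $2^{O(m)}$; once that is set up, the final counting step — every $H$-free graph is a subgraph of one of the $2^{O(m)}$ containers, each with $O(m)$ edges, hence at most $2^{O(m)} \cdot 2^{O(m)} = 2^{O(m)}$ such graphs — is immediate.
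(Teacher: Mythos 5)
Your overall architecture --- apply Conjecture~\ref{conj:refinedES} to each container, bound the co-degree function, invoke Theorem~\ref{thm:coveroff}, and iterate until every container has $O(\ex(n,H))$ edges --- is exactly the paper's (see the proof of Proposition~\ref{prop:generalH}). But your resolution of what you correctly identify as the main obstacle gets the key parameter choice backwards, and the summation you then claim is false. Writing $m = \ex(n,H)$, you propose taking $\tau_j$ \emph{larger} than $1/k_j$, say $\tau_j = k_j^{-1+c'/2}$, so that the per-round container cost is $\exp\big(O(k_j^{c'/2}\, m \log k_j)\big)$, and you assert that summing over the geometrically decreasing sequence $k_j$ this is dominated by the \emph{last} term, where $k_j = k_0$. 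It is not: $k_j^{c'/2} m \log k_j$ is increasing in $k_j$, so the sum is dominated by the \emph{first} round, where $k_j \approx n^2/m$, and the resulting bound is of order $\exp\big((n^2/m)^{c'/2}\, m \log n\big)$ --- far larger than $2^{O(m)}$ for any bipartite $H$.

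The correct move is the opposite one, and it is precisely what the $(1+\eps)$ in the exponent of~\eqref{eq:conjES} buys. With $N = e(G) = km$ and $d_\HH(\sigma) \le C e(\HH) / \big(k^{(1+\eps)(|\sigma|-1)} e(G)\big)$, the co-degree function satisfies $\delta(\HH,\tau) \le \delta$ already for $\tau \asymp k^{-(1+\eps)}$ (the paper takes $1/\tau = \delta^2 k^{1+\eps}$): each term $\tau^{-(j-1)} \sum_v d^{(j)}(v)/e(\HH)$ is then at most $C\delta^{2(j-1)}$. This $\tau$ is \emph{smaller} than $1/k$, so each application of Theorem~\ref{thm:coveroff} contributes only $\exp\big(O(\tau N \log(1/\tau))\big) = \exp\big(O(k^{-\eps} m \log k)\big)$ containers --- a quantity whose exponent is $o(m)$ in the early, dense rounds where $k$ is large, and is $O(m)$ only in the final rounds where $k$ has dropped to the constant $k_0$. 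Summing $k_j^{-\eps} m\log k_j$ over the geometrically decreasing $k_j$ therefore gives $O(m)$, dominated by the last term, after which your final counting step (each terminal container has $O(m)$ edges, so there are at most $2^{O(m)}\cdot 2^{O(m)}$ $H$-free graphs) goes through. Your intermediate worry that one "needs $k^{-c'} \lesssim 1/k$, i.e.\ $c'\ge 1$" conflates the two roles of $\tau$: shrinking $\tau$ makes the container count smaller but the hypothesis $\delta(\HH,\tau)\le\delta$ harder to verify, and the entire content of~\eqref{eq:conjES} is that the hypothesis survives down to $\tau \asymp k^{-(1+\eps)}$ for \emph{any} fixed $\eps>0$, which is all one needs.
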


In fact we shall actually prove a slightly more general result (see Section~\ref{sec:proof}), which doesn't require a lower bound on the extremal number of $H$. We remark that, although we do not demand a lower bound on the number of edges of the hypergraph $\HH$ in Conjecture~\ref{conj:balancedES}, we expect that it can be chosen to have (up to a constant factor) as many copies of $H$ as the random graph $G(n,m)$, where $m = k \cdot \ex(n,H)$. We remark that the conjecture holds for the complete bipartite graph $H = K_{s,t}$, under the additional assumption that $\ex(n,K_{s,t}) = \Omega( n ^{2 - 1/s})$. We refer the reader to Section~\ref{sec:Kst} for the precise statement.

\subsection{The Tur\'an problem for random graphs}

\enlargethispage{\baselineskip}

Another consequence of Theorem~\ref{thm:cycle:hypergraph} (which also follows via the hypergraph container method) relates to the so-called `Tur\'an problem on $G(n,p)$', that is, the problem of determining (the typical value of) the maximum number of edges in an $H$-free subgraph of $G(n,p)$. Let us write
$$\ex \big( G(n,p), H \big) \, := \, \max\Big\{ e(G) \,:\, G \subset G(n,p) \textup{ and $G$ is $H$-free} \Big\}$$
and note that both $G(n,p)$ and $\ex \big( G(n,p), H \big)$ are random variables.\footnote{Throughout the paper we will abuse notation slightly by writing $G(n,p)$ to denote both a random variable and the realisation of that random variable.} 

This question has received an enormous amount of attention in recent years (see the excellent survey~\cite{RS}, and the recent breakthroughs in~\cite{BMS,CG,ST,Schacht}). In the case $H = C_{2\ell}$ it was solved over fifteen years ago by Haxell, Kohayakawa and \L uczak~\cite{HKL}, in the following sense: they proved that if $p \gg n^{-1 + 1/(2\ell-1)}$ then $\ex \big( G(n,p), C_{2\ell} \big) \ll e\big( G(n,p) \big)$, whereas if $p = o\big( n^{-1 + 1/(2\ell-1)} \big)$ then $\ex \big( G(n,p), C_{2\ell} \big) = \big( 1 + o(1) \big) e\big( G(n,p) \big)$. Much more precise bounds for a certain range of $p$ were obtained by Kohayakawa, Kreuter and Steger~\cite{KKS}, who showed that, with high probability, 
\begin{equation}\label{eq:KKSthm}
\ex \big( G(n,p), C_{2\ell} \big) \, = \, \Theta\Big( n^{1 + 1/(2\ell-1)} (\log \alpha)^{1/(2\ell-1)} \Big)
\end{equation}
if $p = \alpha n^{-1 + 1/(2\ell-1)}$ and $2 \le \alpha \le n^{1/(2\ell-1)^2}$. However, no non-trivial upper bounds appear to have been obtained for much larger values of $p$.  

Using the hypergraph container method, together with Theorem~\ref{thm:cycle:hypergraph}, we will prove the following upper bounds on $\ex \big( G(n,p), C_{2\ell} \big)$. Both are simple consequences of a structural result (i.e., generalization of Theorem~\ref{thm:cycle:containers}) which we will state and prove in Section~\ref{sec:Turan}. Moreover, by~\eqref{eq:KKSthm} the first bound is sharp up to a polylog-factor, and we will show that, modulo a well-known (and widely believed) conjecture of Erd\H{o}s and Simonovits, the second bound is sharp\footnote{This would also imply that the bound in part~$(b)$ of Theorem~\ref{thm:cycle:hypergraph} is essentially best possible.} up to the value of the constant $C$.

\begin{thm}\label{thm:randomturan}
For every $\ell \ge 2$, there exists a constant $C  = C(\ell) > 0$ such that 
$$\ex \big( G(n,p), C_{2\ell} \big) \, \le \,  \left\{
\begin{array} {c@{\quad}l} 
C n^{1 + 1/(2\ell-1)} (\log n)^2 & \textup{if } \; p \le n^{-(\ell - 1) / (2\ell-1)} (\log n)^{2\ell} \\[+1ex]
C p^{1/\ell} n^{1+1/\ell} & \textup{otherwise} 
\end{array}\right.$$
with high probability as $n \to \infty$. 
\end{thm}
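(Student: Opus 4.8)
The plan is to deduce Theorem~\ref{thm:randomturan} from the hypergraph container method applied to the container-finding setup powered by Theorem~\ref{thm:cycle:hypergraph}, exactly as Theorem~\ref{thm:main} was deduced from Theorem~\ref{thm:cycle:containers}, but now keeping careful track of the size of the container family as a function of the number of edges. First I would establish the structural result alluded to in the excerpt (the generalization of Theorem~\ref{thm:cycle:containers} to be stated as Theorem~\ref{thm:cycle:containers:turan}): for each $\ell$ and each target edge count $m$ in a suitable range, there is a family $\G$ of at most $2^{O(m)}$ graphs on $[n]$, each with $O\big(\max\{m, n^{1+1/\ell}\}\big)$ edges (more precisely, with the appropriate container size depending on $m$), such that every $C_{2\ell}$-free graph is a subgraph of some $G \in \G$. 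This comes from iterating the container lemma: given a graph $G$ with $k n^{1+1/\ell}$ edges, Theorem~\ref{thm:cycle:hypergraph} furnishes a $2\ell$-uniform hypergraph $\HH$ on $E(G)$ whose codegree bounds in part~$(b)$ are precisely what is needed to make the container parameter $\delta(\HH)$ small, so one application of the container lemma reduces the edge count by a constant factor while enlarging the "fingerprint" vocabulary only mildly; iterating $O(\log k)$ times drives the edge count down to $\Theta(n^{1+1/\ell})$, and the total number of fingerprints used is $2^{O(n^{1+1/\ell} \cdot \mathrm{polylog})}$, or more carefully $2^{O(m)}$ in the regime of interest.

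Next I would feed $G(n,p)$ into this structure. Condition on the container family $\G = \G(m)$. A $C_{2\ell}$-free subgraph $G \subset G(n,p)$ lies inside some container $G_0 \in \G$, hence $e(G) \le e\big(G(n,p) \cap G_0\big)$, which is stochastically dominated by $\mathrm{Bin}\big(e(G_0), p\big)$. Since each $G_0$ has at most roughly $C\max\{m, n^{1+1/\ell}\}$ edges, a Chernoff bound gives that $e\big(G(n,p) \cap G_0\big)$ exceeds, say, $2Cpm$ (once $pm \gg$ the container size times $p$, i.e. once $m$ is chosen at the threshold $m \asymp \max\{ p^{1/\ell} n^{1+1/\ell},\ n^{1+1/(2\ell-1)}(\log n)^2 \}$) with probability at most $2^{-\Omega(pm)}$. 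A union bound over the at most $2^{O(m)}$ containers succeeds provided $pm \gg m$, i.e. $p \gg 1$ — which is false, so instead one must choose $m$ large enough that the Chernoff exponent $\Omega(p \cdot e(G_0))$, hmm — the correct balance is: take $m$ to be the claimed bound on $\ex(G(n,p),C_{2\ell})$; then for the relevant container size the Chernoff exponent beats $\log|\G| = O(m')$ where $m'$ is the much smaller edge count at which one stops the iteration. Concretely, one runs the container iteration down to $m' \asymp n^{1+1/\ell}$ (or $n^{1+1/(2\ell-1)}\mathrm{polylog}$ in the first regime), so $\log|\G| = O(m')$, while every container has $O(m')$ edges, and we want $e\big(G(n,p)\big) \cap G_0 \le $ the claimed bound: when $p$ is not too small the expected intersection size $p m'$ is already of order $p^{1/\ell} n^{1+1/\ell}$... one checks the two cases of $p$ separately, using in the small-$p$ case the second-moment/sparse behaviour and in the large-$p$ case that $pm'$ dominates, and the union bound closes because $p m' \gg m'$ precisely when $p \gg 1$, so in fact in the sparse regime one instead bounds the probability that $G(n,p)$ contains a large $C_{2\ell}$-free subgraph directly by summing over containers the probability $\Pr\big[ e(G(n,p) \cap G_0) \ge C n^{1+1/(2\ell-1)}(\log n)^2 \big]$ and using that the Chernoff exponent there, of order $n^{1+1/(2\ell-1)}(\log n)^2 \cdot \log(1/p\cdot\ldots)$, exceeds $\log |\G|$.

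The main obstacle, and the place where all the work really is, is getting the quantitative balance in the container iteration right so that $\log|\G(m')|$ is genuinely $O(m')$ rather than $O(m' \cdot \mathrm{polylog})$ or worse — because the Chernoff exponent against which we union-bound is only linear in the container edge count, any polylog slack in $\log|\G|$ would have to be absorbed by a polylog factor in the final bound, which is exactly why the $(\log n)^2$ appears in the first case and why one cannot do better there by this method. This forces a delicate choice of the number of iterations and of the parameter $k$ at each stage, and requires that the codegree bound in part~$(b)$ of Theorem~\ref{thm:cycle:hypergraph} have exactly the stated exponent $2\ell - |\sigma| - (|\sigma|-1)/(\ell-1)$: the extra $(|\sigma|-1)/(\ell-1)$ saving over the "trivial" exponent $2\ell - |\sigma|$ is what makes $\delta(\HH)$ polynomially small in $k$, hence makes each container step cost only $O(n^{1+1/\ell})$ bits and the whole iteration cost $O(n^{1+1/\ell}\log n)$ bits total. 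I would also need to check the range of validity of $p$ (and that the two expressions in the statement agree at the crossover $p \asymp n^{-(\ell-1)/(2\ell-1)}(\log n)^{2\ell}$), and handle the trivial regime of very small $p$ where $\ex(G(n,p),C_{2\ell}) = (1+o(1))e(G(n,p))$ separately (this is below the Haxell--Kohayakawa--\L uczak threshold and needs no containers). Finally, the matching lower bound modulo the Erd\H{o}s--Simonovits conjecture, claimed in the footnote, would be obtained by taking a conjectured extremal $C_{2\ell}$-free graph of density $n^{-(\ell-1)/\ell}$ and intersecting it with $G(n,p)$, but as that is only asserted conditionally I would relegate it to a remark.
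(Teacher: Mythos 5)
There is a genuine gap, and it sits exactly at the point you flag as ``the main obstacle.'' Your plan is to union-bound over the containers themselves, i.e.\ over the family $\G$ produced by iterating the container lemma, and you assert that a sufficiently delicate choice of the number of iterations and of the parameter $k$ makes $\log|\G|$ genuinely linear in the container edge count, with no polylog loss. That cannot work: the container-counting bound in Theorem~\ref{thm:coveroff} is $\exp\big(\tau\log(1/\tau)N/\delta\big)$, and the $\log(1/\tau)$ factor is the entropy of choosing the fingerprint $T$ (a set of $\le \tau N/\delta$ vertices of $\HH$ out of $N$); it is intrinsic to counting containers and is not removable by tuning $k$ or the stopping time. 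Concretely, stopping the iteration at $e(G_0)\le kn^{1+1/\ell}$ with $p\approx k^{-\ell/(\ell-1)}$, one has $\log|\G| = \Theta\big(k^{-1/(\ell-1)}n^{1+1/\ell}\log k\big)$ while the binomial tail for $e(G(n,p)\cap G_0)\ge m$ is roughly $(epkn^{1+1/\ell}/m)^m$; balancing forces $m\gtrsim p^{1/\ell}n^{1+1/\ell}\log(1/p)$. This is precisely the computation of Theorem~\ref{thm:randomturan:weak}, so your argument proves that weaker statement (with the extra $\log n$) but not Theorem~\ref{thm:randomturan} in the dense regime.

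The missing idea is to union-bound over fingerprints rather than containers, and to charge the entropy of the fingerprint to the random graph. This is what Section~\ref{sec:Turan} does: using the fingerprint form of the container lemma (Theorem~\ref{thm:containers:turan}), one records $g(I)=(T_1,\dots,T_m)$ with $g(I)\subset I\subset g(I)\cup h(g(I))$, and proves (Theorem~\ref{thm:cycle:containers:turan:refined}) that the number of fingerprint sequences with $s$ edges in total is at most $\big(Cn^{1+1/\ell}/s\big)^{\ell s}\cdot\exp\big(Ck^{-1/(\ell-1)}n^{1+1/\ell}\big)$. The point is that the $s$-dependent entropy factor $\big(Cn^{1+1/\ell}/s\big)^{\ell s}$ --- which is exactly the source of the $\log$ you cannot avoid --- is now multiplied by $p^{s}$, the probability that the fingerprint itself lies in $G(n,p)$, and these cancel once $s \lesssim p^{1/\ell}n^{1+1/\ell}$. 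What survives is the clean additive term $\exp\big(Ck^{-1/(\ell-1)}n^{1+1/\ell}\big)$, and the union bound then closes at $m\gg p^{1/\ell}n^{1+1/\ell}$ with no logarithmic loss. Without this step (or an equivalent way of exploiting that the fingerprint is a subgraph of $G(n,p)$), your proposal does not reach the stated bound. Your treatment of the sparse case, the crossover check, and the conditional lower-bound remark are otherwise in line with the paper.
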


In Section~\ref{sec:Kst} we shall prove a similar (and also probably close to best possible) theorem for $K_{s,t}$-free graphs. We remark that similar results in the closely related setting of $B_h$-sets (though using somewhat different techniques) were also obtained recently in a series of papers  by Dellamonica, Kohayakawa, Lee, R\"odl and Samotij~\cite{DKLRS1,DKLRS2,DKLRS3,KLRS}.

The rest of the paper is organised as follows. First, in Section~\ref{sec:outline}, we give an outline of the proofs of Theorems~\ref{thm:main} and~\ref{thm:cycle:hypergraph} and prove Proposition~\ref{prop:counterexample}. Next, in Section~\ref{Sec:ES}, the most substantial part of the paper, we prove Theorem~\ref{thm:cycle:hypergraph}. In Section~\ref{sec:containers} we formally introduce the hypergraph container method, and in Section~\ref{sec:proof} we will use it to prove Theorems~\ref{thm:main} and~\ref{thm:cycle:containers}, Theorem~\ref{cor:fewwithfewedges} and Proposition~\ref{prop:conj:implies:thm}. We will also give a relatively simple proof of a slightly weaker version of Theorem~\ref{thm:randomturan} (with an extra $\log$-factor), and then, in Section~\ref{sec:Turan}, a more involved proof of the precise statement. Finally, in Section~\ref{sec:Kst}, we will sketch the proof of some similar results with `even cycle' replaced by `complete bipartite graph'.

\section{Preliminaries}\label{sec:outline}

In this section we will prepare the reader for the proofs of the main theorems, and prove the lower bounds claimed in the Introduction. First, in Sections~\ref{sec:containers:outline} and~\ref{sec:superset:sketch}, we will describe the hypergraph container method, and give a sketch of the proof of Theorem~\ref{thm:cycle:hypergraph}. Next, in Section~\ref{sec:lowerbounds}, we will prove Proposition~\ref{prop:counterexample}, as well as similar lower bounds for other families of cycles, and a (conditional) matching lower bound for Theorem~\ref{thm:randomturan}. Lastly, in Sections~\ref{sec:definitions} and~\ref{sec:notation}, we will introduce some of the basic concepts that will be used in the proof of Theorem~\ref{thm:cycle:hypergraph}.

\subsection{The hypergraph container method}\label{sec:containers:outline}

\enlargethispage{\baselineskip}

One of the main results of~\cite{BMS,ST} (see Theorem~\ref{thm:coveroff}, below) states that, given any $r$-uniform hypergraph $\HH$, there exists a relatively small collection of vertex sets (containers), that cover the independent sets of $\HH$, and each of which contains fewer than $(1 - \delta) e(\HH)$ edges of $\HH$. The number of containers depends on the density and `uniformity' of the hypergraph; more precisely, the stronger our upper bounds on the degrees of sets in $\HH$ (as a proportion of the number of edges in $\HH$), the smaller the family of containers is guaranteed to be. We will repeatedly apply this result to the hypergraph produced by Theorem~\ref{thm:cycle:hypergraph}, which encodes (a highly uniform sub-family of) the copies of $C_{2\ell}$ in a graph $G$. The container theorem produces a family of subgraphs of $G$ that forms a cover of the $C_{2\ell}$-free subgraphs of $G$; we then apply the container theorem to each of these graphs, and so on. 

By this method, we obtain a rooted tree $T$ of subgraphs of $K_n$, such that every $C_{2\ell}$-free graph is contained in some leaf of $T$, and each leaf has $O(n^{1 + 1/\ell})$ edges. (To be slightly more explicit, each vertex of this tree corresponds to a graph, the root is $K_n$, and the out-neighbours of each vertex are given by the container theorem described above.) To guarantee that each leaf has $O(n^{1 + 1/\ell})$ edges, we simply apply the container theorem sufficiently many times, noting that Theorem~\ref{thm:cycle:hypergraph} is valid as long as $G$ has more than this many edges. 

It remains to count the leaves of $T$; in order to do so, we need to bound the number of containers formed in each application of Theorem~\ref{thm:coveroff}. This is controlled by a parameter~$\tau$ which (roughly speaking) measures the uniformity of $\HH$, and it turns out that (to deduce Theorem~\ref{thm:main}, for example) we need to be able to apply the theorem with $\tau \approx k^{-(1+\eps)}$, for some $\eps > 0$, when $e(G) = k n^{1 + 1/\ell}$. In order to do so, we shall use properties~$(a)$ and~$(b)$ of Theorem~\ref{thm:cycle:hypergraph}, in particular the fact that our upper bound on $d_\HH(\sigma)$ improves by a factor of more than $k^{1+\eps}$ each time $|\sigma|$ increases by one. In order to deduce Theorem~\ref{thm:randomturan}, on the other hand, we will need the full strength of  Theorem~\ref{thm:cycle:hypergraph}, see Sections~\ref{sec:proof} and~\ref{sec:Turan}. 

\enlargethispage{\baselineskip}

\subsection{The proof of Theorem~\ref{thm:cycle:hypergraph}}\label{sec:superset:sketch}

The most technical part of this paper is the proof of Theorem~\ref{thm:cycle:hypergraph}, in Section~\ref{Sec:ES}. Here we will attempt to give an outline of the key ideas in the proof, and thereby hopefully make it easier for the reader to follow the details of the calculation.

The basic idea, motivated by the proof of Bondy and Simonovits~\cite{BS74}, is as follows: we will find a vertex $x \in V(G)$ and a $t \in \{2,\ldots,\ell\}$ such that the $t^{th}$ neighbourhood $A_t$ of $x$ is no larger than it `should' be. By the conditions $e(G) = k n^{1 + 1/\ell}$ and $k \ge k_0$, such a pair $(x,t)$ must exist (see Lemma~\ref{lem:concentrated:exists}); we will choose a pair with $t$ as small as possible. One can find many cycles in $G$ formed by two paths from $x$ to $A_t$, plus a path of length $2\ell - 2t$ which alternates between the sets $A_{t-1}$ and $A_t$. Repeating this process for a positive proportion of the vertices of $G$, we find at least $\delta k^{2\ell} n^2$ copies of $C_{2\ell}$ in $G$. 

The proof of part~$(a)$, outlined above, is already highly non-trivial, and the requirement that no set of edges of $G$ be contained in too many members of $\HH$ (i.e., copies of $C_{2\ell}$) introduces significant extra complications. We overcome these by substantially modifying our strategy. First, we shall find the cycles in $\HH$ one at a time, selecting carefully from the available choices, instead of simply taking every cycle in $G$ which passes through the vertex $x$. In order to do so, we shall construct (in each step of the process) a sufficiently large sub-family $\C$ of the cycles through $x$ with the following property: no cycle in $\C$ contains any (`saturated') set of edges that are already contained in the maximum allowed number of members of $\HH$. Since $\C$ is sufficiently large, we will be able to deduce that not all of these cycles are already  in $\HH$, and so we can add one of them to our collection.

In order to construct $\C$, we will need to introduce two further types of neighbourhood, which we term the \emph{balanced} and \emph{refined $t$-neighbourhoods} of a vertex $x \in V(G)$. These both consist of a collection of sets $\A = (A_1,\ldots,A_t)$ and a family of paths $\P$ from $x$ to $A_t$, whose $j$th edge ends in $A_j$, which satisfy several further `uniformity' conditions. In particular, for a pair $(\A,\P)$ to be balanced we will require there to be `not too many' sub-paths of $\P$ between any two vertices of $G$, and for a pair to be refined we will require that every vertex of $A_t$ receives `many' paths from $x$. Using the minimality of $t$ (in the $t$-neighbourhood of $x$ chosen above) we will show (see Lemma~\ref{prop:RNF:exists}) that $x$ has a balanced neighbourhood with almost as many paths as one would expect, which avoids all saturated sets of edges, and (see Lemma~\ref{lem:balanced_to_refined}) that every balanced $t$-neighbourhood $(\A,\P)$ contains a refined $t$-neighbourhood~$(\B,\Q)$. 

We now perform the following algorithm. Let $(\A,\P)$ be a balanced $t$-neighbourhood of the vertex $x$, and use the lemma mentioned above to find a refined $t$-neighbourhood $(\B,\Q)$. We form cycles by choosing a path from $x$ to $B_t$, a zig-zag path of length $2\ell - 2t$ between $B_t$ and $B_{t-1}$, and then a path in $\Q$ back to $x$. We repeat this process sufficiently many times, adding to $\C$ only those cycles which avoid all saturated sets of edges. This part of the proof is surprisingly intricate; in particular, one of the key difficulties will be in ensuring we (typically) have many `legal' choices for the path back to $x$. Once we have shown that the family $\C$ thus constructed is sufficiently large, we simply note that each of the cycles passes through one of the edges between $x$ and $B_1$. Assuming that the hypergraph $\HH$ constructed so far does not already have sufficiently many edges, it follows by the pigeonhole principle that one of the cycles of $\C$ is not already a member of $\HH$, as required.

\subsection{Lower bounds, and a proof of Proposition~\ref{prop:counterexample}}\label{sec:lowerbounds}

In this section we will describe an extremely simple method of producing many $\F$-free graphs on $n$ vertices, for certain forbidden families $\F$ consisting of cycles. The proof is based on that of a similar result of Saxton and Thomason~\cite{ST} for Sidon sets. As well as Proposition~\ref{prop:counterexample}, we shall prove the following result, which generalizes the bound stated earlier for the family $\F = \{K_3,C_6\}$.

\begin{prop}\label{prop:general:counter}
There exists a constant $c > 0$ such that the following holds. Let $\ell \ge 3$, and set $\F = \{C_3,\ldots,C_\ell\} \cup \{C_{2\ell}\}$. 
There are at least
$$2^{(1 + c) \ex(n,\F)}$$
$\F$-free graphs on $n$ vertices for infinitely many values of $n \in \N$. 
\end{prop}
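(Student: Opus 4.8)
The plan is to construct, for infinitely many $n$, a large family of $\F$-free graphs by taking subgraphs of a single well-chosen extremal-type graph and counting them via a counting/deletion argument. First I would fix an $\F$-free graph $G_0$ on $n$ vertices which is (nearly) extremal, i.e. with $e(G_0) \ge (1-o(1))\ex(n,\F)$; since $\F$ contains $C_{2\ell}$ (and all shorter odd/even cycles up to length $\ell$), the Erd\H{o}s--Bondy--Simonovits bound gives $\ex(n,\F) = O(n^{1+1/(2\ell)})$, but more importantly for a \emph{lower} bound I need a good construction of such a $G_0$. The key point, following the Saxton--Thomason argument for Sidon sets, is that one does not actually need the extremal construction: it suffices to take $G_0$ to be a \emph{random} subgraph of $K_n$ of the appropriate density, or a suitable algebraic/blow-up construction, and then observe that a typical such graph has few copies of the forbidden cycles, all of which can be destroyed by deleting a small number of edges.

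Concretely, the core step is this. Let $m$ be slightly larger than $\ex(n,\F)$ — say $m = (1+c')\ex(n,\F)$ for a small constant $c'>0$ — and consider a uniformly random graph $G$ with $m$ edges on $[n]$. Using the supersaturation bound coming from $\ex(n,\F) = O(n^{1+1/(2\ell)})$ (or directly from Theorem~\ref{thm:cycle:hypergraph} applied to the shortest forbidden cycle), the expected number of copies in $G$ of each graph in $\F$ is not much larger than $m$ itself, provided $c'$ is chosen small enough; in particular one can arrange that the expected total number of copies of members of $\F$ is at most, say, $c'' m$ for a small constant $c''$. Then for a positive proportion of such $G$, the number of copies is at most $2c'' m$, and we may delete one edge from each copy to obtain an $\F$-free subgraph $G'$ with at least $m - 2c'' m = (1+c'-2c'') \ex(n,\F)$ edges. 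Choosing $c', c''$ so that $c' - 2c'' > 0$ gives an $\F$-free graph with $(1+c)\ex(n,\F)$ edges for $c = c' - 2c'' > 0$.

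The counting step is then immediate: having produced an $\F$-free graph $G'$ with at least $(1+c)\ex(n,\F)$ edges, every one of its $2^{e(G')} \ge 2^{(1+c)\ex(n,\F)}$ subgraphs is also $\F$-free, and these are all distinct graphs on vertex set $[n]$. (One must make sure that $(1+c)\ex(n,\F)$ is what appears, rather than $(1+c)$ times a quantity that might be asymptotically smaller than $\ex(n,\F)$ — this is fine as long as $\ex(n,\F) = \Theta(n^{1+1/(2\ell)})$, which holds whenever the relevant extremal construction exists, and in any case we only need the bound for infinitely many $n$, so we may restrict to $n$ in the range where a good construction is available, e.g.\ coming from the known $C_{2\ell}$-type incidence graphs or from the random construction above.)

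I expect the main obstacle to be the supersaturation input: one needs that a random graph with $m = (1+c')\ex(n,\F)$ edges does not have \emph{too many} copies of the shortest forbidden cycle (and the longer ones), where ``too many'' means roughly linear in $m$. For the \emph{shortest} cycle $C_3$ this is a standard second-moment / Kim--Vu-type estimate, but the subtlety is that $\ex(n,\F)$ is governed by $C_{2\ell}$ only when $n$ is of a special form, so one must be careful that at those special $n$ the random-graph copy count really is controlled by $m$ and not blown up. The cleanest way around this is to not take $G$ fully random but to take $G$ to be a random subgraph (keeping each edge with a constant probability) of an \emph{actual} extremal $\F$-free-ish graph such as a suitable generalized polygon incidence graph, so that the copy-count of every member of $\F$ is automatically under control; then the deletion step removes only a constant fraction of edges and the constant $c$ emerges. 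Either route works; I would present the random-subgraph-of-a-construction version, since it makes the ``infinitely many $n$'' qualification transparent and keeps the probabilistic estimates to a one-line first-moment computation.
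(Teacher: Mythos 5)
Your argument has a fatal flaw at its core: you propose to ``produce an $\F$-free graph $G'$ with at least $(1+c)\ex(n,\F)$ edges'' and then count its $2^{e(G')}$ subgraphs. But by the very definition of the extremal number, no $\F$-free graph on $n$ vertices has more than $\ex(n,\F)$ edges, so such a $G'$ cannot exist, and your deletion step can never achieve $c'-2c''>0$. The random-graph-plus-deletion method is a technique for proving \emph{lower bounds on} $\ex(n,\F)$; it is structurally incapable of producing an $\F$-free graph exceeding $\ex(n,\F)$ edges. More generally, any strategy of the form ``take all subgraphs of one fixed $\F$-free graph'' is capped at $2^{\ex(n,\F)}$, and therefore cannot prove the proposition, whose entire content is that the count beats this trivial bound by a constant factor in the exponent.

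The paper's proof gets around this by producing many \emph{distinct} $\F$-free graphs which are not all subgraphs of a single $\F$-free graph. One picks $n$ divisible by $3$ with $\ex(n,\F) \le 3^{1+1/\ell+o(1)}\ex(n/3,\F)$ --- infinitely many such $n$ exist precisely because of the upper bound $\ex(n,\F) = O(n^{1+1/\ell})$, so no lower bound on $\ex$ is needed --- takes an extremal $\F$-free graph $G$ on $n/3$ vertices, blows up each vertex into a triple, and replaces each edge of $G$ by an arbitrary matching of $K_{3,3}$. A cycle of length $k$ in such a blow-up projects to a non-backtracking closed walk of length $k$ in $G$, which is either a cycle of length $k$ or contains a cycle of length at most $\lfloor k/2\rfloor$; the specific choice $\F=\{C_3,\dots,C_\ell\}\cup\{C_{2\ell}\}$ makes all of these forbidden, so every blow-up is $\F$-free. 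Since $K_{3,3}$ has $34$ matchings, this yields $34^{\ex(n/3,\F)}$ distinct $\F$-free graphs, and the numerical miracle $3^{-4/3}\log_2 34 > 1$ converts this into $2^{(1+c)\ex(n,\F)}$. Any repair of your write-up must replace the ``one big graph, all its subgraphs'' step with a mechanism of this kind that genuinely multiplies the number of graphs rather than the number of edges.
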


\begin{proof}
Choose $c > 0$ sufficiently small\footnote{Since $3^{-4/3} \log_2(34) > 1.17 > (21/20) \cdot 3^{1/20}$, it follows that taking $c = 1/20$ suffices.} so that $(1 + c) 3^{4/3+c} < \log_2 34$, and let $n \in \N$ be such that $\ex(3n,\F) \le 3^{4/3+c} \ex(n,\F)$. Note that since $\ex(n,\F) \le \ex(n,C_{2\ell}) = O(n^{4/3})$, there exist infinitely many such values of $n$. Let $G$ be an extremal graph for $\F$ on $n$ vertices, and let $\G$ be the collection of graphs obtained from $G$ by blowing up each vertex to size three, and replacing each edge by a (not necessarily perfect) matching. Since there are 34 matchings in $K_{3,3}$, it follows that 
$$|\G| \, = \, 34^{\ex(n,\F)} \, > \, 2^{(1 + c) 3^{4/3+c} \ex(n,\F)} \, \ge \, 2^{(1 + c)\ex(3n,\F)}$$
by our choice of $c > 0$ and $n \in \N$. It therefore suffices to show that $\G$ is a family of $\F$-free graphs on $3n$ vertices. To do so, simply note that a cycle of length $k$ in $G' \in \G$ corresponds to a non-backtracking walk in $G$ of length $k$, which must either be a cycle, or contain a cycle of length at most $\lfloor k/2 \rfloor$. This proves that $\G$ is $\F$-free, as required.
\end{proof}

It is easy to see that the proof above can be applied (for example) to any family $\F$ consisting of a graph $H$ with $\ex(n,H) = O(n^{4/3})$ together with all graphs obtained from $H$ by identifying groups of vertices that are pairwise at distance at least 3 from one another. On the other hand, in order to prove Proposition~\ref{prop:counterexample} we will need to apply the following theorem of  F\"uredi, Naor and Verstra\"ete~\cite{FNV}.

\begin{thm}[F\"uredi, Naor and Verstra\"ete, 2005]\label{thm:FNV}
For all sufficiently large $n \in \N$,
$$\ex(n,C_6) < 0.6272 \cdot n^{4/3},$$
and for infinitely many values of $n$ there exists a $\{K_3,C_6\}$-free graph $G$ on $n$ vertices with 
$$e(G) > 0.5338 \cdot n^{4/3}.$$
\end{thm}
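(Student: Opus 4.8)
The plan is to prove the two assertions by completely different means: a refined counting argument for the upper bound, and an algebraic construction for the lower bound.

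\textbf{Upper bound.} Let $G$ be a $C_6$-free graph on $n$ vertices. First I would clean $G$: by repeatedly deleting vertices whose degree is below half the current average degree one obtains a still $C_6$-free subgraph $H$ with $\delta(H) \ge d := e(G)/n$, so it suffices to bound $d$ in terms of $n' := |V(H)| \le n$. The heart of the matter is to count paths with three edges. On the one hand, by convexity the number of such paths in $H$ is at least $(\tfrac12 + o(1)) d^3 n'$ when $H$ is close to regular (which, after cleaning, one may assume up to lower order terms). On the other hand, $C_6$-freeness forces any two distinct $3$-paths joining the \emph{same} pair of endpoints $\{x,y\}$ to share an internal vertex — otherwise their union is a copy of $C_6$. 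I would then carry out the case analysis of how two such $3$-paths can meet (sharing the first internal vertex, sharing the second, or meeting in a ``crossed'' way that forces a triangle), show that for each pair the $3$-paths between its endpoints are organized into a bounded number of ``fans'' through a common vertex, and use $C_6$-freeness once more to control the total size of these fans summed over all pairs; this yields a bound of the shape (number of $3$-paths) $\le C\, n'^2$ for an explicit constant $C$. Combining the two estimates gives $d^3 \le (C/c)\,n'$ and hence $e(G) \le 0.6272\, n^{4/3}$ after the constants are optimized.

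\textbf{Lower bound.} For the matching-order construction I would use a polarity graph of a generalized quadrangle. Fix an even prime power $q$, take the symplectic generalized quadrangle $W(q)$ of order $(q,q)$ together with a polarity $\pi$ (which exists when $q$ is even), and form the graph $G_q$ on the points of $W(q)$, joining $p$ to $p'$ precisely when $p$ lies on the line $\pi(p')$ (equivalently $p'$ lies on $\pi(p)$), deleting the absolute points. A short cycle $p_1 p_2 \cdots p_k$ in $G_q$ translates, via the lines $L_i := \pi(p_i)$ and the incidences $p_i \in L_{i-1} \cap L_{i+1}$, into a short cycle in the incidence graph of $W(q)$ or into two points lying on two common lines; since that incidence graph has girth $8$ and no two points of a generalized quadrangle lie on two common lines, $G_q$ contains no triangle, no $C_4$ and no $C_6$. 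Now $G_q$ has $n = (1+o(1))q^3$ vertices, each of degree $q$ or $q+1$, so $e(G_q) = (\tfrac12 + o(1)) q^4 = (\tfrac12 + o(1)) n^{4/3}$; a careful accounting of the absolute points (and, if needed, a sharper choice of quadrangle or of which incident edges to retain) pushes the constant up to the stated $0.5338$. Letting $q$ range over the relevant powers produces infinitely many admissible $n$.

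\textbf{Main obstacle.} The difficulty in both halves is purely quantitative. Crude versions give nothing like the stated constants: the bare Bondy--Simonovits bound has a constant much larger than $0.6272$, and the plain bipartite incidence graph of $W(q)$ only gives density $2^{-4/3} \approx 0.397$. Reaching $0.6272$ requires the fan analysis in the upper bound to be essentially tight (and the convexity step to lose nothing), and the principal danger there is ``funnelling'': a pair $\{x,y\}$ can carry many $3$-paths all passing through a single vertex, so one must bound the global contribution of such configurations rather than work pair by pair. Reaching $0.5338$ requires a genuinely optimized construction. Since Theorem~\ref{thm:FNV} is used here only as a black box feeding Proposition~\ref{prop:counterexample}, in practice I would simply invoke~\cite{FNV}.
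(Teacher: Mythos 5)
The paper does not prove Theorem~\ref{thm:FNV}: it is imported verbatim from~\cite{FNV} and used as a black box in the proof of Proposition~\ref{prop:counterexample} (the authors only add, by hand, the observation that the FNV construction is triangle-free). So your closing remark --- that in practice you would simply invoke~\cite{FNV} --- is exactly what the paper does, and for the purposes of this paper it is the right move.

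Your standalone sketch, however, contains a genuine gap in the lower bound. The polarity graph of $W(q)$ on $N = q^3+q^2+q+1$ vertices has $\tfrac12\big((q+1)N - a\big)$ edges, where $a$ is the number of absolute points; since $a = O(q^2) = o(N)$, this graph has only $\big(\tfrac12+o(1)\big)N^{4/3}$ edges. No ``careful accounting of the absolute points'' can raise the constant from $0.5$ to $0.5338$, because the absolute points contribute only lower-order terms; indeed, beating the $\tfrac12$ barrier set by polarity/incidence graphs (previously conjectured to be asymptotically optimal) is precisely the achievement of the construction in~\cite{FNV}, which is \emph{not} a polarity graph. The distinction is not cosmetic here: the proof of Proposition~\ref{prop:counterexample} requires $\log_2(34) > \frac{0.6272}{c}\cdot 3^{4/3}$, where $c$ is the lower-bound constant. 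This holds (barely) for $c = 0.5338$, since $\frac{0.6272}{0.5338}\cdot 3^{4/3} \approx 5.084 < \log_2 34 \approx 5.087$, but fails for $c = 0.5$, since $\frac{0.6272}{0.5}\cdot 3^{4/3} \approx 5.43$. A proof of the theorem with $0.5$ in place of $0.5338$ would therefore not suffice for its only use in the paper. On the positive side, your graph genuinely is $\{K_3,C_4,C_5,C_6\}$-free, by the girth-$8$ argument you indicate. As for the upper bound, your framework --- count $3$-edge paths, note that two internally disjoint ones between the same endpoints close into a hexagon, and classify the intersection patterns --- is the right one, but the step ``this yields a bound of the shape $\le C n'^2$'' is exactly where the work lies: as you yourself observe, a single pair of endpoints can carry arbitrarily many $3$-paths funnelling through one vertex, and handling this globally, with enough precision to land on the specific constant $0.6272$, is the substance of~\cite{FNV} rather than something that follows from the outline.
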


\enlargethispage{\baselineskip}

Since $\frac{0.6272}{0.5338} \approx 1.17497 < 1.1758$, we may apply the same argument as in the proof above. We remark that it seems to be an incredible coincidence that the bounds obtained in~\cite{FNV} are almost exactly what we need in order to deduce the proposition.

\begin{proof}[Proof of Proposition~\ref{prop:counterexample}]
Let $G$ be the $\{K_3,C_6\}$-free graph constructed in~\cite{FNV}, with $n/3$ vertices and more than $0.5338 \cdot (n/3)^{4/3}$ edges, where $n \in \N$ is chosen arbitrarily among those integers for which this graph exists. We remark that although in~\cite{FNV} it is not proven that $G$ is triangle-free, this follows easily from their construction via a little case analysis. Now, letting $\G$ be the collection of graphs obtained by blowing up each vertex to size three, and replacing each edge by a matching, it follows that $\G$ is a $C_6$-free family, exactly as in the proof above. Moreover, by Theorem~\ref{thm:FNV}, we have\footnote{Note that $2^{\frac{0.6272}{0.5338} \cdot 3^{4/3}} \approx 33.91$, so we need to take $c < 0.0007$.} 
$$|\G| \, > \, 34^{0.5338 (n/3)^{4/3}} \, > \, 2^{(1 + c) 0.6272 n^{4/3}} \, > \, 2^{(1 + c)\ex(n,C_6)},$$
for some $c > 0$, as required.
\end{proof} 

Since it follows from a similar construction, let us also take this opportunity to show that the bound in Theorem~\ref{thm:randomturan}, and hence also that in Theorem~\ref{thm:cycle:hypergraph}, is essentially best possible, conditional on the following conjecture of Erd\H{o}s and Simonovits~\cite{ES82}.

\begin{conj}[Erd\H{o}s and Simonovits, 1982]\label{conj:ESextremal}
$$\ex\big( n, \{ C_3,C_4,\ldots,C_{2\ell} \} \big) \, = \, \Theta\big( n^{1 + 1/\ell} \big).$$ 
\end{conj}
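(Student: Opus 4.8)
A $\{C_3,C_4,\ldots,C_{2\ell}\}$-free graph is in particular $C_{2\ell}$-free, so
$$\ex\big( n, \{ C_3,C_4,\ldots,C_{2\ell} \} \big) \, \le \, \ex(n,C_{2\ell}) \, = \, O\big( n^{1+1/\ell} \big)$$
by the Erd\H{o}s--Bondy--Simonovits bound recalled in the Introduction; this direction is free and has room to spare.

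\textbf{The lower bound, and the plan.} The real content is the matching lower bound, which is equivalent to the assertion that, for infinitely many $n$, there is a graph on $n$ vertices with $\Omega(n^{1+1/\ell})$ edges and \emph{girth at least $2\ell+1$} (i.e.\ containing no cycle of length $\le 2\ell$). The plan would be to build such a graph from incidence geometry: a thick generalized $(\ell+1)$-gon of order $(q,q)$ has a $(q+1)$-regular bipartite incidence graph on $n \asymp q^\ell$ vertices, with $\asymp q^{\ell+1} \asymp n^{1+1/\ell}$ edges and girth exactly $2(\ell+1)$, hence no cycle of length $\le 2\ell$. Letting $q$ range over the prime powers for which such a polygon exists proves the conjecture along an infinite (if sparse) sequence of~$n$, and one fills in the intermediate values of $n$ by adding isolated vertices. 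By the Feit--Higman theorem this route goes through precisely for $\ell \in \{2,3,5\}$ --- via projective planes, generalized quadrangles and generalized hexagons --- which is exactly why the conjecture is currently known only for $C_4$, $C_6$ and $C_{10}$, see~\cite{Benson,FNV,LUW,W91}.

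\textbf{The main obstacle.} For every other value of $\ell$ one needs a genuinely \emph{new} construction of a graph with $\Omega(n^{1+1/\ell})$ edges and girth exceeding $2\ell$, and this is where the difficulty lies. The probabilistic deletion method fails decisively: in $G(n,p)$ with $p = c\,n^{-1+1/\ell}$, so that $e(G) \asymp n^{1+1/\ell}$, the expected number of copies of $C_{2\ell}$ is of order $n^{2\ell} p^{2\ell} \asymp c^{2\ell} n^2$, which dwarfs the number of edges, so removing one edge from each copy of $C_{2\ell}$ destroys essentially the whole graph; deletion only reaches density $n^{1 + 1/(2\ell-1)}$, the wrong exponent. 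Likewise, neither the random greedy process that simultaneously avoids all of $C_3,\ldots,C_{2\ell}$ (in the style of the analyses of Bohman and Keevash) nor the algebraic $CD(k,q)$ graphs of Lazebnik, Ustimenko and Woldar are known to attain the exponent $1+1/\ell$ for general $\ell$. I would therefore expect any attempt to prove the conjecture in full to stall at exactly this point --- the construction of the extremal graph --- which is precisely why Erd\H{o}s and Simonovits stated it only as a conjecture and why the present paper invokes it conditionally.

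\textbf{Lines of attack.} If one nevertheless wanted a programme for the open cases, the most plausible candidates seem to be: (i) a refined analysis of the simultaneous cycle-avoiding random greedy process, exploiting the interaction between the forbidden cycles in the hope that the process survives down to density $n^{1+1/\ell}$; (ii) group-theoretic lifts or covers of the generalized-polygon incidence graphs that preserve density and girth while escaping the Feit--Higman restriction; or (iii) an explicit polynomial or number-theoretic construction in the spirit of the Wenger graphs, but tuned to produce girth $> 2\ell$ at density $n^{1+1/\ell}$. None of these is close to working at present, so the honest assessment is that the upper bound is routine and the lower bound remains the open heart of the matter.
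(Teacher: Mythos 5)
This statement appears in the paper only as an open conjecture of Erd\H{o}s and Simonovits (1982); the paper offers no proof and invokes it purely conditionally, to argue that the second bound in Theorem~\ref{thm:randomturan} is essentially sharp. Your assessment matches this exactly: the upper bound is immediate from the Erd\H{o}s--Bondy--Simonovits bound $\ex(n,C_{2\ell}) = O(n^{1+1/\ell})$, while the lower bound amounts to constructing graphs of girth greater than $2\ell$ with $\Omega(n^{1+1/\ell})$ edges, which is known only for $\ell \in \{2,3,5\}$ via incidence graphs of generalized polygons (and blocked for other $\ell$ by Feit--Higman along that route), so you are right not to claim a proof --- there is none to compare against.
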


If the conjecture is true, then there exists (for infinitely many values of $N$) a graph $G$ with~$N$ vertices and $\eps N^{1+1/\ell}$ edges, and girth at least $2\ell + 1$. Choose $p \in (0,1)$ arbitrarily small, set $a = \eps / p$, and blow up each vertex of $G$ into a set of $a$ vertices. Set $n = aN$, and place a copy of the Erd\H{o}s-R\'enyi random graph $G(n,p)$ on these $n$ vertices; that is, choose edges independently at random with probability $p$. We discard all edges inside classes, and between pairs of classes corresponding to non-edges of $G$. For each pair of classes corresponding to an edge of $G$, we retain an arbitrary maximal matching. 

To see that this graph has, with high probability, at least $\eps^3 p^{1/\ell} n^{1 + 1/\ell}$ edges, simply observe that for each edge of $G$, we expect to obtain at least $p a^2 / 2 = \eps^2 / 2p$ edges. (Indeed, we can search for an edge incident to each vertex in turn, ignoring those which have already been used.) Thus the expected number of edges in our $C_{2\ell}$-free subgraph of $G(n,p)$ is at least
$$\frac{\eps^2}{2p} \cdot \eps N^{1+1/\ell} \, = \, \frac{\eps^2}{2p} \cdot \eps \bigg( \frac{p n}{\eps} \bigg)^{1+1/\ell} \, \ge \, \eps^2 \cdot p^{1/\ell} n^{1+1/\ell}.$$
Since the events are independent for different edges of $G$, a standard concentration argument shows that the claimed bound holds with high probability.

\subsection{Saturated sets in good hypergraphs}\label{sec:definitions}

In order to slightly simplify the presentation of the proof of Theorem~\ref{thm:cycle:hypergraph} in Section~\ref{Sec:ES}, we shall prepare the ground in this section by giving a couple of key definitions, and proving a simple lemma. 

Let us fix $\ell \ge 2$ throughout the rest of the paper. We will need various constants in the proof below; we define them here for convenience. Informally, they will satisfy
$$0 \, < \, \delta  \, \ll \, \eps(1) \, \ll \, \eps(2) \, \ll \, \cdots \, \ll \, \eps(\ell) \, \ll \, 1 \, \ll \, C  \, \ll \, k_0.$$
More precisely, we can set $C = 10\ell$, $\eps(\ell) = 1/C^3$, $\eps(t-1) = \eps(t)^{t}$ for each $2 \le t \le \ell$, $\delta = \eps(1)^{3\ell}$ and $k_0 = 1/\delta$. We emphasize that this value of $C$, which is fixed throughout the proof of Theorem~\ref{thm:cycle:hypergraph}, is not the same as the (various different) constants $C$ which appear in the statements in the Introduction. 

For each $n,k \in \N$ and $j \in [2\ell-1]$, set
\begin{equation}\label{def:Delta}
\Delta^{(j)}(k,n) \, = \, \frac{k^{2\ell-1} n^{1 - 1  / \ell}}{ \big( \delta k^{\ell / (\ell-1)} \big)^{j-1}},
\end{equation}
and note that $\Delta^{(j)}(k,n) \ge 1$ if $k \le n^{(\ell-1)/\ell}$. This will represent the maximum degree of a set of~$j$ vertices in the $2\ell$-uniform hypergraph we will construct, whose edges will represent (a subset of the) copies of~$C_{2\ell}$ in a given graph $G$ with~$n$ vertices and~$kn^{1+1/\ell}$ edges. 

\begin{defn}[Good hypergraphs]
We will say that a hypergraph $\HH$ is \emph{good} with respect to $(\delta, k,\ell,n)$ (or simply good), if $d_\HH(\sigma) \le \Delta^{(|\sigma|)}(k,n)$ for every $\sigma \subset V(\HH)$ with $1 \le |\sigma| < 2\ell$.

Similarly, we will say that a collection $\HH$ of copies of $C_{2\ell}$ in a graph $G$ is \emph{good} if the corresponding hypergraph with vertex set $E(G)$ and edge set $\HH$ is good.
\end{defn}

Next, we define~$\F(\HH)$ to be the collection of subsets of~$V(\HH)$ that are at their maximum degree.

\begin{defn}[Saturated sets]
Given a $2\ell$-uniform hypergraph $\HH$ and a set $\sigma \subset V(\HH)$ with $1 \le |\sigma| < 2 \ell$, we say that~$\sigma$ is \emph{saturated} if $d_\HH(\sigma) \ge \lfloor \Delta^{(|\sigma|)}(k,n) \rfloor$. Set
 \[
  \F(\HH) = \big\{ \sigma \subset V(\HH) \,:\, \sigma \mbox{ is saturated} \big\}.
 \]
\end{defn}

We will sometimes need to ensure that a copy of $C_{2\ell}$ contains no saturated set of edges. In avoiding these sets, the following concept will be useful. For each $S \subset V(\HH)$ and $j \in \N$, define the \emph{$j$-link} of~$S$ in~$\F = \F(\HH)$ to be\footnote{We shall write $X^{(j)}$ and $X^{(\le \, j)}$ to denote the collection of subsets of $X$ of size (at most)~$j$. }
\[
 L_\F^{(j)}(S) \,=\, \Big\{ \sigma \in \big( V(\HH) \setminus S \big)^{(j)} \,:\, \sigma \cup \tau \in \F \mbox{ for some non-empty } \tau \subset S \Big\},
\]
and set $L_\F(S) = \bigcup_{j \ge 1} L_\F^{(j)}(S)$. In order to give the reader some practice with the various notions just introduced, let us prove the following easy (and useful) bound on $|L_\F^{(j)}(S)|$.

\begin{lemma}\label{lem:size_of_link}
Let $\HH$ be a good $2\ell$-uniform hypergraph and let~$\F = \F(\HH)$. Then
 \[
  \big| L_\F^{(j)}(S) \big| \, \le \, 2^{2\ell+|S|+1} \cdot \big( \delta k^{\ell/(\ell-1)} \big)^j
 \]
for every $j \in \N$ and every $S \subset V(\HH)$.
\end{lemma}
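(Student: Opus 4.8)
The plan is to bound $|L_\F^{(j)}(S)|$ by summing, over every non-empty $\tau \subset S$, the number of $j$-sets $\sigma$ disjoint from $S$ with $\sigma \cup \tau \in \F$. Fix such a $\tau$, and write $t = |\tau| \ge 1$. Any $\sigma \cup \tau \in \F$ is in particular a saturated set of size $j + t < 2\ell$, so its degree in $\HH$ is at least $\lfloor \Delta^{(j+t)}(k,n) \rfloor$; in particular it is a set of size $j+t$ that actually appears in some edge of $\HH$, i.e. $d_\HH(\sigma \cup \tau) \ge 1$. To count the possible $\sigma$ for this fixed $\tau$, I would argue by a union-of-degrees / double-counting argument: the number of $j$-sets $\sigma$ with $\sigma \cup \tau$ lying in an edge of $\HH$ is controlled by the degree of a \emph{subset} of $\tau$. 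Concretely, picking any single vertex $v \in \tau$, every edge of $\HH$ containing $\sigma \cup \tau$ contains $\sigma \cup \{v\}$, so the number of such $\sigma$ is at most the number of $j$-subsets of $\bigcup \{ e \setminus \{v\} : v \in e \in \HH \}$; more usefully, since each such $\sigma\cup\{v\}$ must have positive degree, the number of $\sigma$ is at most $d_\HH(\{v\})$ if $j = 1$, and in general at most the number of $(j{+}1)$-sets through $v$ of positive degree, which is crudely at most $d_\HH(\{v\}) \le \Delta^{(1)}(k,n)$ — but this is too lossy and does not give the $(\delta k^{\ell/(\ell-1)})^j$ saving.

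So instead I would use the goodness bound more carefully, exploiting the ratio $\Delta^{(j+t)}/\Delta^{(t)} = (\delta k^{\ell/(\ell-1)})^{-j}$. The right estimate: for a fixed $\tau$ with $d_\HH(\tau)>0$, count pairs $(\sigma, e)$ with $\sigma \in (V(\HH)\setminus S)^{(j)}$, $e \in \HH$, and $\sigma \cup \tau \subseteq e$. On one hand, since each $\sigma$ contributing satisfies $\sigma\cup\tau \in \F$, hence $d_\HH(\sigma\cup\tau) \ge \lfloor \Delta^{(j+t)}(k,n)\rfloor \ge \tfrac12 \Delta^{(j+t)}(k,n)$ (using $\Delta^{(j+t)} \ge 1$ when $k \le n^{(\ell-1)/\ell}$, which holds here), the number of pairs is at least $|\{\text{good }\sigma\text{ for }\tau\}| \cdot \tfrac12\Delta^{(j+t)}(k,n)$. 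On the other hand, each edge $e \supseteq \tau$ contributes at most $\binom{2\ell}{j} \le 2^{2\ell}$ choices of $\sigma$, and there are $d_\HH(\tau) \le \Delta^{(t)}(k,n)$ such edges, so the number of pairs is at most $2^{2\ell}\,\Delta^{(t)}(k,n)$. Dividing, the number of $\sigma$ for this $\tau$ is at most
$$
\frac{2 \cdot 2^{2\ell}\,\Delta^{(t)}(k,n)}{\Delta^{(j+t)}(k,n)} \;=\; 2^{2\ell+1}\big(\delta k^{\ell/(\ell-1)}\big)^{j},
$$
where the last equality is exactly the telescoping of the definition \eqref{def:Delta}. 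Summing over the at most $2^{|S|} - 1 < 2^{|S|}$ non-empty $\tau \subset S$ (only those with $d_\HH(\tau) > 0$ can contribute, but we may bound crudely) gives $|L_\F^{(j)}(S)| \le 2^{2\ell + |S| + 1}(\delta k^{\ell/(\ell-1)})^j$, as claimed. If $|S| + j \ge 2\ell$ the link is empty and there is nothing to prove, so we may assume $j + t < 2\ell$ whenever needed.

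The main obstacle I anticipate is the bookkeeping around the floor function and the constant factor: one must check $\lfloor \Delta^{(j+t)} \rfloor \ge \tfrac12 \Delta^{(j+t)}$, which is where the hypothesis $k \le n^{(\ell-1)/\ell}$ (guaranteeing $\Delta^{(j)} \ge 1$, noted right after \eqref{def:Delta}) is used — though one should double-check the lemma's hypotheses, since as stated it only assumes $\HH$ is good; if the bound $k \le n^{(\ell-1)/\ell}$ is not available one can absorb a possibly-$0$ floor by noting that if $\lfloor \Delta^{(j+t)}\rfloor = 0$ then \emph{every} set of size $j+t$ with positive degree is saturated and a cruder count via $d_\HH(\tau)$ suffices. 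A secondary subtlety is ensuring the double-counting lower bound is nonvacuous, i.e. that we only sum over $\tau$ that genuinely lie in an edge; but any $\tau$ with $d_\HH(\tau) = 0$ contributes no $\sigma$ at all, so the sum is over a subset of the nonempty subsets of $S$ and the bound $2^{|S|}$ stands. Everything else is the routine telescoping $\prod$ that collapses the ratio of $\Delta$'s.
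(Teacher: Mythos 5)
Your final argument is correct and is essentially identical to the paper's: the same decomposition over non-empty $\tau \subset S$, the same double count of pairs $(\sigma,e)$ (which is the paper's handshaking step, with the same $2^{2\ell}$ multiplicity and the same use of $\lfloor \Delta^{(|\tau|+j)} \rfloor \ge \tfrac12 \Delta^{(|\tau|+j)}$ via $\Delta^{(j)} \ge 1$), and the same $2^{|S|}$ sum at the end. The initial false start and the remark about $|S|+j \ge 2\ell$ (where the right observation is that $\J(\tau)=\emptyset$ for each $\tau$ with $|\tau|+j\ge 2\ell$, since $\F$ contains only sets of size less than $2\ell$) do not affect the validity of the proof.
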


\begin{proof}
For each non-empty set $\tau \subset S$, set
\[
 \J(\tau) \, = \, \Big\{ \sigma \in \big( V(\HH) \setminus S \big)^{(j)} \,:\, \sigma \cup \tau \in \F \Big\}.
\]
Now, by the handshaking lemma and the definition of goodness, we have 
$$2^{-2\ell} \sum_{\sigma \in \J(\tau)} d_\HH(\sigma \cup \tau) \, \le \, d_\HH(\tau) \, \le \,  \Delta^{(|\tau|)}(k,n),$$
since each edge of~$\HH$ is counted at most $2^{2\ell}$ times in the sum (once for each~$\sigma \in \J(\tau)$ which is a subset of that edge). Moreover
$$\sum_{\sigma \in \J(\tau)} d_\HH(\sigma \cup \tau) \, \ge \, |\J(\tau)| \cdot \lfloor \Delta^{(|\tau|+j)}(k,n) \rfloor,$$
by the definitions of~$\J$ and~$\F$. Thus
\begin{equation}\label{eq:Deltaratio}
 |\J(\tau)| \, \le \, 2^{2\ell} \cdot \frac{\Delta^{(|\tau|)}(k,n)}{\lfloor \Delta^{(|\tau|+j)}(k,n) \rfloor} \, \le \, 2^{2\ell+1} \cdot \big( \delta k^{\ell/(\ell-1)} \big)^j.
\end{equation}
Finally, since the sets $\J(\tau)$ cover $L_\F^{(j)}(S)$, it follows that
\[
 \big| L_\F^{(j)}(S) \big| \, \le \sum_{\emptyset \,\neq\, \tau \,\subset\, S} |\J(\tau)| \, \le \, 2^{2\ell+|S|+1} \cdot  \big( \delta k^{\ell/(\ell-1)} \big)^j,
\]
as required.
\end{proof}

\enlargethispage{\baselineskip}

\subsection{Notation}\label{sec:notation}

Let us finish this section by introducing a few more pieces of notation which will prove useful in the proof of Theorem~\ref{thm:cycle:hypergraph}. First, a \emph{$t$-neighbourhood} of a vertex $x \in V(G)$ is simply a collection $\A = (A_1,\ldots,A_t)$ of (not necessarily disjoint) sets of vertices such that $A_i \subset N(A_{i-1})$ for each $i \in [t]$ (here, and throughout, we set $A_0 = \{x\}$). In the following definitions, let $\P$ be a collection of paths $(x,u_1,\ldots,u_t)$ of length $t$ in $G$, where $u_i \in A_i$. 

We denote by 
$$\P_{i,j} \, = \, \big\{ (u_i,\ldots,u_j) \,:\, (x, u_1,\ldots, u_t) \in \P \big\}$$ 
the set of paths which travel between the $i$th and the $j$th vertices\footnote{By convention, we give the initial vertex of a path label zero, so $u_i$ is the $i$th vertex of $(x, u_1,\ldots, u_t)$.} of a path in~$\P$. Moreover, given $u,v \in V(G)$ and a collection of paths $\Q$ (for example, $\Q = \P_{i,j}$), we write
$$\Q[u \to v] \, = \, \big\{ (x_1,\ldots,x_s) \in \Q \,:\, x_1 = u \text{ and } x_s = v \big\}$$ 
for the set of paths in $\Q$ which begin at $u$ and end at $v$. Similarly, for $S \subset V(G)$ we write $\Q[u \to S] = \bigcup_{v \in S} \Q[u\to v]$ for the set of paths in~$\Q$ that start at~$u$ and end in~$S$.

Given a family of edge-sets $\F$, we will say that a path $P \in \P$ \emph{avoids} $\F$ if $E(P)$ contains no member of $\F$ as a subset, and similarly that $\P$ avoids $\F$ if every $P \in \P$ avoids $\F$. 

Given a vertex $v \in V(G)$ and $r \in \{0,\ldots,t-1\}$, the \emph{$r$-branching factor} of $v$ in $\P$ is the maximum $d$ such that there exist $d$ paths in $\P$ with the $r$th vertex $v$, and pairwise distinct $(r+1)$st vertices. The branching factor of $\P$ is the maximum branching factor of a vertex in $\P$. Finally, if $\A = (A_1,\ldots,A_t)$ and $\B = (B_1,\ldots,B_t)$ are sequences of sets of vertices of the same length, then we write $\A \prec \B$ to denote the fact that $A_i \subset B_i$ for every $i \in [t]$.

\section{The balanced Erd\texorpdfstring{\H{o}}{o}s-Simonovits conjecture for even cycles}\label{Sec:ES}

In this section we shall prove Theorem~\ref{thm:cycle:hypergraph}, using the following key proposition, which allows us to build up the good hypergraph~$\HH$ representing cycles in~$G$ one cycle at a time.
Recall that an integer $\ell \ge 2$, and constants $\delta > 0$ and $k_0 \in \N$ were fixed above.

\begin{prop}\label{prop:finding:cycles}
Let $n,k \in \N$, with $k \ge k_0$, let $G$ be a graph with $n$ vertices and $kn^{1+1/\ell}$ edges, and let $\HH$ be a collection of copies of $C_{2\ell}$ in $G$ that is good with respect to $(\delta,k,\ell,n)$. If $e(\HH) \le \delta k^{2\ell} n^2$, then there exists a copy $H \not\in \HH$ of $C_{2\ell}$ such that $\HH \cup \{H\}$ is good.
\end{prop}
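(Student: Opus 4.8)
\textbf{Proof proposal for Proposition~\ref{prop:finding:cycles}.}

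The plan is to produce a single $2\ell$-cycle in $G$ that both avoids every saturated set of edges (so that adding it keeps $\HH$ good) and is not already an edge of $\HH$. The strategy follows the Bondy–Simonovits BFS paradigm, but carried out with enough bookkeeping to control the ``uniformity'' of the cycles found. First I would invoke the (forthcoming) lemma guaranteeing that, since $e(G) = k n^{1+1/\ell}$ with $k \ge k_0$, there is a vertex $x \in V(G)$ and an index $t \in \{2,\dots,\ell\}$ such that the $t$th neighbourhood $A_t$ of $x$ is ``concentrated'' (no larger than expected), and I would pick such a pair with $t$ minimal. Minimality of $t$ is what forces the lower levels $A_1,\dots,A_{t-1}$ to be large and well-spread, which is the engine behind the counting. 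The idea is then to form cycles of length $2\ell$ as the concatenation of: a path from $x$ down to $B_t$ (in a refined $t$-neighbourhood), a zig-zag path of length $2\ell - 2t$ alternating between $B_{t-1}$ and $B_t$, and a return path from $B_t$ back to $x$; this has length $t + (2\ell - 2t) + t = 2\ell$, as needed.

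The heart of the argument is arranging that enough of these cycles avoid the saturated family $\F = \F(\HH)$. Here I would use Lemma~\ref{prop:RNF:exists} to obtain a \emph{balanced} $t$-neighbourhood $(\A,\P)$ of $x$ whose set of paths $\P$ still has close to the expected size and which \emph{avoids} $\F$; the avoidance is possible because Lemma~\ref{lem:size_of_link} (applied with small $S$) shows that the saturated sets form only a $\delta k^{\ell/(\ell-1)}$-ish fraction of the relevant choices at each step, so greedily discarding paths that hit a saturated set costs only a constant factor. Then Lemma~\ref{lem:balanced_to_refined} refines $(\A,\P)$ to $(\B,\Q)$ in which every vertex of $B_t$ receives many paths from $x$, and ``balancedness'' bounds the number of $B_t$–$B_t$ sub-paths, so the zig-zag middle section has many realizations too. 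Counting: the number of cycles assembled this way is (up to constants) of order $\delta^{O(1)} k^{2\ell} n^2 / n = \delta^{O(1)} k^{2\ell} n$ per vertex $x$, but actually we only need it to exceed the trivial bound on how many edges of $\HH$ can pass through the $\le n$ edges between $x$ and $B_1$: since each of these edges has $\HH$-degree at most $\Delta^{(1)}(k,n) = k^{2\ell-1} n^{1-1/\ell}$, at most $n \cdot \Delta^{(1)}(k,n) = k^{2\ell-1} n^{2-1/\ell}$ cycles through such an edge lie in $\HH$, which is smaller by a factor $\approx k / n^{1/\ell}$ (harmless since $k \le n^{(\ell-1)/\ell}$, a regime one may assume) than the number we construct — provided $e(\HH) \le \delta k^{2\ell} n^2$ so that no relevant set became saturated for a spurious reason. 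Hence by pigeonhole some assembled cycle $C$ is not in $\HH$; and since every edge of $C$ lies only in non-saturated sets by construction, $d_{\HH \cup \{C\}}(\sigma) \le \Delta^{(|\sigma|)}(k,n)$ for all $\sigma$, i.e.\ $\HH \cup \{C\}$ is good.

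The main obstacle, and the step I expect to be genuinely delicate rather than routine, is controlling the \emph{return path}: after fixing the initial descent to $B_t$ and the zig-zag, one must count paths in $\Q$ from the far endpoint back to $x$ that are vertex-disjoint from the portion already chosen \emph{and} avoid $\F$. Both constraints remove paths, and the vertex-disjointness constraint interacts with the branching structure of $\Q$ in a way that is not obviously benign; this is where the refined-neighbourhood property (every $v \in B_t$ has many paths) and the balancedness bound on sub-paths must be combined carefully, and where the bulk of the technical calculation in Section~\ref{Sec:ES} will live. Everything else — the existence of the concentrated pair $(x,t)$, the passage balanced $\to$ refined, the final pigeonhole — I would treat as invocations of the cited lemmas plus arithmetic with the constant hierarchy $\delta \ll \eps(1) \ll \cdots \ll \eps(\ell) \ll 1 \ll C \ll k_0$.
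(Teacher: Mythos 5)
Your architecture is the paper's: a minimal concentrated $t$-neighbourhood, a balanced neighbourhood avoiding $\F$ (Lemma~\ref{prop:RNF:exists}), a refinement (Lemma~\ref{lem:balanced_to_refined}), cycles assembled as descent $+$ zig-zag $+$ return, and a final pigeonhole over the edges between $x$ and $B_1$. But the pigeonhole as you state it does not close. You compare roughly $k^{2\ell}n$ constructed cycles against $n\cdot\Delta^{(1)}(k,n)=k^{2\ell-1}n^{2-1/\ell}$ cycles that could already lie in $\HH$; the ratio of the latter to the former is $n^{1-1/\ell}/k\ge 1$ precisely \emph{because} $k\le n^{(\ell-1)/\ell}$, so the inequality runs the wrong way (the regime you call ``harmless'' is exactly what kills it). Even replacing $n$ by the correct $|B_1|\le kn^{1/\ell}$ only gets you to $k^{2\ell}n$ versus $\eps(t)^{O(\ell)}k^{2\ell}n$, still the wrong direction. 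The hypothesis $e(\HH)\le\delta k^{2\ell}n^2$ must be used more aggressively than ``no spurious saturation'': by averaging one passes to a subgraph of $G$ in which \emph{every} edge satisfies $d_\HH(e)<4\ell\delta\cdot k^{2\ell-1}n^{1-1/\ell}$, a $\delta$-fraction of $\Delta^{(1)}$ (see~\eqref{eq:Delta1_bounded}). The target then drops to $4\ell\delta k^{2\ell}n$ cycles, which the construction beats because $\delta\ll\eps(t)^{2\ell}$. Without this reduction the proposition is not proved.

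The second gap is the one you flag yourself: controlling the return paths. Acknowledging that it is delicate is not the same as doing it, and it is the bulk of the argument. The paper isolates the set $\D$ of initial paths $P$ for which a quarter of the return paths in $\Q(P)$ are blocked because their entire edge set lies in $L_\F(E(P))$, and bounds $|\D|$ by a double-counting argument (Claims~3 and~4): if $\D$ were large, counting pairs $(J,Q)$ with $E(Q)\cup J\in\F$, and using the fact that each saturated set has $\HH$-degree at least $\lfloor\Delta^{(|J|+t)}(k,n)\rfloor$, would force some edge $\{x,u\}$ to have $d_\HH(\{x,u\})>\Delta^{(1)}(k,n)$, contradicting goodness. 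This requires an upper bound $m(j)$ on the number of initial paths ending at a fixed $v\in B_t$ and containing a fixed $j$-set of edges, which is why the paths $P$ must be drawn from trimmed neighbourhoods $X_i(u)$ of prescribed size rather than from all of $B_{i+1}\cap N(u)$ --- a device absent from your sketch. The smaller blocking sets ($|\sigma|<t$) and the vertex-disjointness constraint are then handled by Lemmas~\ref{lem:counting_paths_through_sets} and~\ref{lem:counting_paths_through_vertices}, which do follow from balancedness as you suggest; it is only the full-length blocking sets that need the extra idea.
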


We remark that Theorem~\ref{thm:cycle:hypergraph} follows easily by repeatedly applying Proposition~\ref{prop:finding:cycles}, see Section~\ref{ESconj:proofSec} for the details.

\subsection{A sketch of the proof}

The proof Proposition~\ref{prop:finding:cycles} is quite long and technical, and so we shall begin with a brief outline of the proof, see also Section~\ref{sec:superset:sketch}. We shall introduce three different types of `$t$-neighbourhood' of a vertex $x \in V(G)$, which we term \emph{concentrated}, \emph{balanced} and \emph{refined}, respectively, each of which is more restrictive than the previous one. 

The first step in the proof is to choose a vertex $x \in V(G)$ with a concentrated $t$-neighbourhood (see Definition~\ref{defn:concentrated_nbd}), with $t$ as small as possible. Next, we show that $x$ moreover has a balanced $t$-neighbourhood $(\A,\P)$ (see Definition~\ref{def:balanced:nbhd}) containing roughly as many paths as one would expect, and avoiding $\F = \F(\HH)$ (see Lemma~\ref{prop:RNF:exists}). We then show that $x$ has a refined $t$-neighbourhood $(\B,\Q)$ (see Definition~\ref{def:refined:nbhd}) with $\B \prec \A$ and $\Q \subset \P$ (see Lemma~\ref{lem:balanced_to_refined}). The minimality of $t$ is crucial in the proofs of Lemmas~\ref{prop:RNF:exists} and~\ref{lem:balanced_to_refined}.

Having completed these preliminaries, we then construct a collection $\C$ of copies of $C_{2\ell}$ by choosing a path from $x$ to $B_t$, a zigzag path of length $2\ell - 2t$ between $B_t$ and $B_{t-1}$, and a path in $\Q$ back to $x$. We use the properties of a refined $t$-neighbourhood to show that there are many such cycles, and to control the number of cycles that contain a saturated set of edges. Finally, we use our assumption that $\HH$ is good, together with the pigeonhole principle, to show that the collection $\C$ is not contained in $\HH$, as required.

\subsection{Balanced \texorpdfstring{$t$}{t}-neighbourhoods}\label{sec:balanced}

In this section we will lay the groundwork for the proof of Proposition~\ref{prop:finding:cycles} by finding a vertex in $V(G)$ with a particularly `well-behaved' collection of paths of (some well-chosen) length $t$ leaving it. To avoid repetition, let us fix integers $n,k \in \N$ with $k \ge k_0$, a graph $G$ with at most $n$ vertices and at least $k n^{1 + 1/\ell}$ edges, and a collection $\HH$ of at most $\delta k^{2\ell} n^2$ copies of $C_{2\ell}$ in $G$ which is good with respect to $(\delta,k,\ell,n)$. We will also write $\HH$ for the corresponding $2\ell$-uniform hypergraph $\HH$ with vertex set $E(G)$. 

To begin, observe that, since $e(\HH) \le \delta k^{2\ell} n^2$, there are at most 
$$\frac{2\ell \cdot e(\HH)}{\Delta^{(1)}(k,n)} \, \ll \, e(G)$$ 
saturated edges of $G$. Thus, by choosing a (non-empty) subgraph of~$G$ if necessary (and weakening the bound on $e(G)$ slightly), we may assume that
\begin{equation}
 \label{eq:Delta1_bounded}
 d_\HH(e) \,<\, \Delta^{(1)}(k,n) \quad\mbox{ for every $e\in E(G)$,} 
\end{equation}
i.e., that none of the edges of $G$ are saturated. In fact, since $\delta = \eps(1)^{3\ell}$, we may (and will) assume the stronger bound, that 
\begin{equation}\label{eq:deg1bound}
 d_\HH(e) \,<\, \eps(1)^{3\ell-1} k^{2\ell-1} n^{1-1/\ell} \quad\mbox{ for every $e\in E(G)$.} 
\end{equation}
Similarly, since there are at most $C \eps(\ell) k n^{1 + 1/\ell} \ll e(G)$ edges of $G$ incident to vertices of degree at most $C \eps(\ell) k n^{1/\ell}$, we may also assume that~$\delta(G) \ge C \eps(\ell) k n^{1/\ell}$. 

The first step is to define, for each vertex $x \in V(G)$, a distance $t(x) \in [2,\ell]$ at which the neighbourhood of $x$ becomes sufficiently concentrated.

\begin{defn}\label{defn:concentrated_nbd}
Let $x \in V(G)$, and let $\A = ( A_1, \ldots, A_t )$ be a collection of (not necessarily disjoint) subsets of $V(G)$. We say that $\A$ is a {\em concentrated $t$-neighbourhood of~$x$} if 
\begin{equation}\label{eq:concentrated}
|A_t| \le k^{(\ell-t)/(\ell-1)} n^{t/\ell},
\end{equation}
but $|N(v) \cap A_i| \ge C\eps(t) k n^{1/\ell}$ for every $v \in A_{i-1}$ and every $i \in [t]$, where $A_0 = \{x\}$. 

Define $t(x)$ to be the minimal $t \ge 2$ such that there exists a concentrated $t$-neighbourhood of~$x$ in $G$ (set $t(x) = \infty$ if none exists), and define $t(G) = \min\{ t(x) : x \in V(G) \}$.
\end{defn}

The following easy observation uses the fact that $\delta(G) \ge C \eps(\ell) k n^{1/\ell}$.

\begin{lemma}\label{lem:concentrated:exists}
$t(x) \le \ell$ for every $x \in V(G)$.
\end{lemma}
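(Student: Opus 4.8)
The plan is to show that if $x$ has no concentrated $t$-neighbourhood for any $t \le \ell-1$, then the $\ell$-th neighbourhood (grown greedily so that every vertex keeps at least $C\eps(\ell)kn^{1/\ell}$ neighbours at the next level) is necessarily small enough to satisfy~\eqref{eq:concentrated} with $t = \ell$, giving a concentrated $\ell$-neighbourhood and hence $t(x) \le \ell$. Concretely, I would build $\A = (A_1,\ldots,A_\ell)$ inductively: set $A_0 = \{x\}$, and having chosen $A_{i-1}$, let $A_i$ be (a subset of) $\bigcup_{v \in A_{i-1}} N(v)$ obtained by discarding, level by level, any vertex $v \in A_{i-1}$ with fewer than $C\eps(\ell)kn^{1/\ell}$ neighbours in the candidate set at level $i$; since $\delta(G) \ge C\eps(\ell)kn^{1/\ell}$ and each discarded vertex at level $i-1$ costs at most a bounded number of levels' worth of a restart, one checks this process cannot exhaust $\{x\}$ at level $0$ (here minimal degree of $G$ is exactly what prevents the construction collapsing). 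This yields sets with $|N(v) \cap A_i| \ge C\eps(\ell)kn^{1/\ell}$ for every $v \in A_{i-1}$ and every $i \in [\ell]$.

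It then remains to verify the size bound~\eqref{eq:concentrated} at $t = \ell$, namely $|A_\ell| \le k^{0} n^{\ell/\ell} = n$, which is trivially true since $A_\ell \subseteq V(G)$ and $|V(G)| \le n$. So the content is entirely in showing the greedy construction of a full $\ell$-neighbourhood with the stated lower bound on forward-degrees never fails — and then at level $\ell$ the upper bound~\eqref{eq:concentrated} is automatic because the exponent of $k$ vanishes and the exponent of $n$ is $1$.

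The main obstacle is making the greedy pruning argument precise: when we delete low-degree vertices from level $i-1$, this can in turn lower the forward-degree of vertices at level $i-2$, so one must iterate the cleaning over all levels simultaneously and argue termination with $x$ surviving. The clean way to do this is a potential/counting argument: total number of deletions is bounded (each vertex of $G$ deleted at most once), each deletion of a vertex at level $j$ removes fewer than $C\eps(\ell)kn^{1/\ell}$ edges from the level-$(j-1)$/level-$j$ bipartite count, and since $x$ has at least $\delta(G) \ge C\eps(\ell)kn^{1/\ell}$ neighbours, $x$ cannot lose all of them — so level $1$ is nonempty, and inductively every level is nonempty. I would also note the elementary point that $\ell \ge 2$ so the range $t \in [2,\ell]$ is nonempty, and that the constant $C\eps(\ell)$ is small enough (since $\eps(\ell) = 1/C^2$ and $C = 10\ell$) that $C\eps(\ell)kn^{1/\ell} \le kn^{1/\ell}$ comfortably, so no level-size bound is violated en route. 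This step, though conceptually routine, is where all the care is needed; everything after it is immediate.
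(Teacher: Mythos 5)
Your conclusion is right, and the trivial part of your argument (that the size bound~\eqref{eq:concentrated} at $t=\ell$ reads $|A_\ell| \le k^0 n^{\ell/\ell} = n$ and so holds automatically) is exactly the point. But you have manufactured a difficulty that does not exist, and your proposed resolution of it is the one place where your write-up is actually flawed. No pruning is needed at all: simply take $A_i = N(A_{i-1})$ for $1 \le i \le \ell$, i.e.\ the \emph{full} neighbourhood at each level. Then for every $v \in A_{i-1}$ we have $N(v) \subseteq A_i$, so $|N(v) \cap A_i| = d_G(v) \ge \delta(G) \ge C\eps(\ell) k n^{1/\ell}$, which is precisely the forward-degree condition of Definition~\ref{defn:concentrated_nbd} with $t=\ell$. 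This is the paper's (one-line) proof. Equivalently: in your own greedy scheme, if the candidate set at level $i$ is the full union of neighbourhoods, then no vertex of $A_{i-1}$ ever falls below the threshold, so nothing is ever deleted and the cascade you worry about never starts.

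The gap in your proposal is the potential/counting argument you lean on to show the cleaning process does not collapse. You assert that each deletion of a vertex at level $j$ removes fewer than $C\eps(\ell)kn^{1/\ell}$ edges from the level-$(j-1)$/level-$j$ bipartite count. That is not true: a vertex is deleted from level $j$ because its \emph{forward} degree into level $j+1$ is small, but its \emph{backward} degree into level $j-1$ can be as large as $\Delta(G)$, and that is the quantity governing the damage to level $j-1$. So the counting as stated does not control the cascade, and in general such cleaning procedures can indeed destroy everything. Since the lemma needs no cleaning, replace the whole argument with the direct construction above.
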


\begin{proof}
The proof is an easy consequence of the definition: simply set $A_i = N(A_{i-1})$ for each $1 \le i \le \ell$, and note that the condition~\eqref{eq:concentrated} holds trivially for $t = \ell$. 
\end{proof}

As noted above, the cycles in $\C$ will all pass through some vertex $x$ with $t(x) = t(G)$. The next step is to show that, given such a vertex $x$, there exists a `well-behaved' collection of paths in $G$, each starting at $x$ and of length $t(G)$. The following definition gathers the properties that we will require this collection to possess. 

\begin{defn}[Balanced $t$-neighbourhoods]\label{def:balanced:nbhd}
Let $x \in V(G)$ and $2 \le t \in \N$. Set $A_0 = \{x\}$, and suppose that 
\begin{itemize}
\item $\A = ( A_1, \ldots, A_t )$ is a collection of (not necessarily disjoint) sets of vertices of $G$, \smallskip
\item $\P$ is a collection of paths in~$G$ of the form $(x,u_1, \ldots, u_t)$, with $u_i \in A_i$ for each $i \in [t]$. 
\end{itemize}
We will say that the pair $(\A,\P)$ forms a {\em balanced $t$-neighbourhood of $x$} if the following conditions hold:
\begin{enumerate} 
\item[$(i)$]
The first and last levels are not too large, that is: 
\begin{equation}
|A_1| \, \le \, kn^{1/\ell} \qquad \text{and} \qquad |A_t| \, \le \, k^{(\ell-t)/(\ell-1)}n^{t/\ell}.
\end{equation}
 \item[$(ii)$]
 For every $i,j$ with $0 \le i < j \le t$ and $(i,j) \ne (0,t)$, and every pair $u\in A_i$, $v \in A_j$, 
\begin{equation}\label{eq:def:prop5}
|\P_{i,j}[u \to v]| \, \le \, k^{(j-i-1)\ell/(\ell-1)}.
\end{equation}
\item[$(iii)$]
The branching factor of~$\P$ is at most $C\eps(t)kn^{1/\ell}$. 
\end{enumerate}
\end{defn}

Note that in a balanced $t$-neighbourhood $\P$ could be empty, so we will always additionally require a lower bound (of order $(k n^{1/\ell})^t$) on the number of paths; however, the exact value of this lower bound will vary depending on the situation. The purpose of condition~$(i)$ is to allow us to apply the pigeonhole principle later on; conditions~$(ii)$ and~$(iii)$ are designed to prevent too many paths from being removed when we `refine' the neighbourhood in Section~\ref{sec:finding:cycles}.  

The main objective of this subsection is to prove the following lemma. Recall that the graph $G$ and the hypergraph $\HH$ were fixed earlier, and set $t := t(G)$.

\enlargethispage{\baselineskip}

\begin{lemma}\label{prop:RNF:exists}
Let $x \in V(G)$. If $t(x) = t$, then $G$ contains a balanced $t$-neighbourhood $(\A,\P)$ of $x$, with
\begin{equation}\label{eq:number:of:paths}
 |\P| \, \ge \, \frac{C^t}{2} \big( \eps(t) kn^{1/\ell} \big)^t,
 \end{equation}
such that $\P$ avoids $\F = \F(\HH)$.
\end{lemma}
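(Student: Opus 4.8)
The plan is to build the balanced $t$-neighbourhood greedily, level by level, starting from a concentrated $t$-neighbourhood of $x$ (which exists since $t(x) = t$), while pruning at each stage the paths that violate one of conditions $(ii)$, $(iii)$, or that fail to avoid $\F = \F(\HH)$. Concretely, fix a concentrated $t$-neighbourhood $\A^\circ = (A_1^\circ,\ldots,A_t^\circ)$ of $x$, and let $\P^\circ$ be the collection of *all* paths $(x,u_1,\ldots,u_t)$ with $u_i \in A_i^\circ$ and $|N(u_{i-1}) \cap A_i^\circ| \ge C\eps(t)kn^{1/\ell}$ — so by the lower bound on the neighbourhood sizes in Definition~\ref{defn:concentrated_nbd}, this family has size at least $\big(C\eps(t)kn^{1/\ell}\big)^t$. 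Condition $(i)$ will be essentially automatic: $|A_t^\circ|$ is bounded by~\eqref{eq:concentrated}, and for $|A_1| \le kn^{1/\ell}$ we can either discard $x$'s excess neighbours (if $x$ has more than $kn^{1/\ell}$ of them, we only keep the first $kn^{1/\ell}$, which still leaves enough paths since $t \ge 2$ means we are only losing a bounded factor), or use minimality of $t$ — note $t \ge 2$, so $A_1 = N(x)$ restricted appropriately is genuinely a ``first level'', and if $|N(x)| > kn^{1/\ell}$ then since $e(G) = kn^{1+1/\ell}$ there is enough room to throw away the surplus.

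The heart of the argument is a counting bound on the number of paths that must be deleted. For condition $(ii)$: a path $(x,u_1,\ldots,u_t) \in \P^\circ$ is *bad* if for some $0 \le i < j \le t$ with $(i,j) \ne (0,t)$, the subpath $(u_i,\ldots,u_j)$ is one of ``too many'' — i.e. more than $k^{(j-i-1)\ell/(\ell-1)}$ — parallel subpaths between $u_i$ and $u_j$. Here is where the minimality of $t$ enters: if there were more than $k^{(j-i-1)\ell/(\ell-1)}$ paths of length $j-i \le t-1 < t$ between two vertices, one could use this to exhibit a concentrated $(j-i)$-neighbourhood (or a neighbourhood of some smaller length), contradicting $t(G) = t$. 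I would make this precise by a dyadic/averaging argument: bound the total number of (path, bad subpath-pair) incidences, observing that each offending subpath of length $s < t$ occurs with multiplicity exceeding the threshold would force $u_i$ to have a concentrated $s$-neighbourhood that is too small, i.e. $|A_s| \le k^{(\ell-s)/(\ell-1)}n^{s/\ell}$ while the in-degrees stay large — impossible for $s < t$. So the total number of bad paths of this type is a small (say $\eps(t)$-fraction) of $|\P^\circ|$. For condition $(iii)$: a vertex $v$ with $r$-branching factor exceeding $C\eps(t)kn^{1/\ell}$ in the surviving family again contradicts minimality of $t$ applied to a suitable sub-neighbourhood rooted at $v$ (or at $x$), so again only an $\eps(t)$-fraction of paths pass through such a vertex. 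For the $\F$-avoidance: by Lemma~\ref{lem:size_of_link} (with $S$ the vertex set of a partial path, so $|S| \le t$), the number of saturated sets in the link of any partial path is at most $2^{2\ell+t+1}(\delta k^{\ell/(\ell-1)})^j$ for each size $j$, and~\eqref{eq:Delta1_bounded} controls how many cycles through each single edge there can be; summing over the $O(1)$ choices of $j \le 2\ell-1$ and using $\delta$ very small, the fraction of paths in $\P^\circ$ hitting some member of $\F$ is at most, say, $1/4$ (this is a routine computation — each potential saturated set forces the continuation of a path into a set of size $\le \Delta^{(\cdot)}$, which is tiny compared to the $\sim kn^{1/\ell}$ legal continuations, by the definition~\eqref{def:Delta} of $\Delta$ and the goodness of $\HH$).

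Combining: after deleting bad paths for $(ii)$ and $(iii)$ and $\F$-avoidance, we retain at least $\big(1 - O(\eps(t)) - 1/4\big)\big(C\eps(t)kn^{1/\ell}\big)^t \ge \tfrac12 C^t\big(\eps(t)kn^{1/\ell}\big)^t$ paths, as required by~\eqref{eq:number:of:paths}; here we use $\eps(t) \ll 1$ and $C = 10\ell$ fixed, so $O(\eps(t))$ is swallowed. Setting $\P$ to be the surviving family and $A_i$ to be the set of $i$th vertices appearing in $\P$ (a subset of $A_i^\circ$, so $(i)$ still holds, possibly after the initial truncation of $A_1$), we obtain the desired balanced $t$-neighbourhood. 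The main obstacle I anticipate is the formalisation of ``minimality of $t$ rules out too much parallelism'': one must show carefully that an excess of parallel subpaths between two vertices, *together with the large in-degree conditions inherited from the concentrated neighbourhood*, genuinely produces a concentrated $s$-neighbourhood for some $s < t$ — the bookkeeping of which sets play the roles of $A_1,\ldots,A_s$ and verifying the size bound~\eqref{eq:concentrated} for the shorter length is the delicate part, and likely requires the dyadic pigeonholing to pick out a ``heavy'' sub-family where the branching is uniform. The $\F$-avoidance and branching-factor prunings, by contrast, should be comparatively routine given Lemma~\ref{lem:size_of_link} and~\eqref{eq:Delta1_bounded}.
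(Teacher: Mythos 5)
Your overall strategy is the same as the paper's (start from a concentrated $t$-neighbourhood, prune to enforce balancedness, use minimality of $t = t(x)$ to show few paths are pruned, use Lemma~\ref{lem:size_of_link} for $\F$-avoidance), but there is a genuine gap in how you propose to obtain condition~$(iii)$, and it propagates into your argument for condition~$(ii)$. A vertex with branching factor exceeding $C\eps(t)kn^{1/\ell}$ does \emph{not} contradict the minimality of $t$: a large forward neighbourhood gives no concentrated $s$-neighbourhood for $s<t$, since Definition~\ref{defn:concentrated_nbd} requires the final level to be \emph{small} while in-degrees are large, and a single high-branching vertex provides neither. Nor is it true that only an $\eps(t)$-fraction of paths pass through high-branching vertices --- if you start from the family $\P^\circ$ of \emph{all} paths through the $A_i^\circ$, almost every path may do so. The paper avoids this entirely by enforcing $(iii)$ \emph{by construction}: before generating any paths it fixes, for each $v\in A_{i-1}$, a subset $Q(v)\subset N(v)\cap A_i$ of size exactly $C\eps(t)kn^{1/\ell}$ and only ever extends paths into $Q(v)$. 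This truncation is not cosmetic. The resulting uniform branching bound is (a) a hypothesis of Lemma~\ref{lem:paths_to_concentrated_nbd}, (b) what lets one bound $\sum_{u\in A_i}|\R(i,j)_{0,i}[x\to u]|\le (C\eps(t)kn^{1/\ell})^i$ and convert ``few bad prefixes'' into ``few bad paths'' (each prefix extends in at most $(C\eps(t)kn^{1/\ell})^{t-j}$ ways), and (c) what pins the total path count at $\approx(C\eps(t)kn^{1/\ell})^t$ so that the additive losses can be compared against it.

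Two further points. First, in your condition-$(ii)$ argument the step you flag as delicate is exactly the one you have not supplied: to apply Lemma~\ref{lem:paths_to_concentrated_nbd} you need the set of \emph{endpoints} of the offending subpaths rooted at $u$ to satisfy the size bound \eqref{eq:concentrated} for length $j-i<t$. The paper gets this from the definition of unbalancedness itself: each unbalanced endpoint absorbs more than $k^{(j-i-1)\ell/(\ell-1)}$ parallel subpaths out of at most $(C\eps(t)kn^{1/\ell})^{j-i}$ in total (again using the branching bound), so there are at most $k^{(\ell-j+i)/(\ell-1)}n^{(j-i)/\ell}$ such endpoints --- no dyadic pigeonholing is needed once the branching is controlled upfront. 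Second, your post-hoc deletion for $\F$-avoidance is harder to make precise than you suggest: Lemma~\ref{lem:size_of_link} bounds the forbidden \emph{continuations of a given partial path}, not the global number of saturated sets, and counting paths through a given saturated $\sigma$ requires Lemma~\ref{lem:counting_paths_through_sets}, which presupposes condition~$(ii)$. The clean fix (and the paper's) is to enforce $\F$-avoidance during the greedy generation, discarding at step $i$ the at most $2^{5\ell}\delta k^{\ell/(\ell-1)}\ll\eps(t)kn^{1/\ell}$ continuations lying in $L_\F^{(1)}(E(S_i))$; since single edges are unsaturated by \eqref{eq:Delta1_bounded}, every saturated subset of $E(P)$ would have to contain a ``last added'' edge lying in such a link, so the generated paths avoid $\F$ outright.
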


In order to find a collection of paths as in Lemma~\ref{prop:RNF:exists}, we simply take a large collection of paths of length $t$ from $x$ (which is guaranteed to exist by Definition~\ref{defn:concentrated_nbd}), and then remove paths until condition~$(ii)$ holds. The key point is that, if too many paths are removed in this second step, then we will be able to show that $t(x) < t$, which is a contradiction. The following easy lemma is the key tool in this deduction.

\begin{lemma}\label{lem:paths_to_concentrated_nbd}
Let $\R$ be a collection of paths of length $s \ge 2$ in $G$ from a vertex $x \in V(G)$ to a set $B \subset V(G)$. If $|B| \le k^{(\ell-s)/(\ell-1)}n^{s/\ell}$,
$$|\R| \, \ge \, 2Cs \cdot \eps(s) \big( kn^{1/\ell} \big)^s$$
and $\R$ has branching factor at most $kn^{1/\ell}$, then $t(x) \le s$. 
\end{lemma}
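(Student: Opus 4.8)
The plan is to \emph{prune} $\R$ down to a non-empty sub-collection $\R'$ in which \emph{every} vertex has large forward branching at \emph{every} position, and then to read off a concentrated $s$-neighbourhood of $u$ from the vertex sets at the successive levels of $\R'$.

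Write each path of $\R$ as $(u = w_0, w_1, \ldots, w_s)$, and call $v$ \emph{deficient} at position $j \in \{0,1,\ldots,s-1\}$ in a sub-collection $\Q \subseteq \R$ if $v$ occurs as the $j$-th vertex of some path of $\Q$ but the number of distinct vertices $w$ for which $(v,w)$ is a consecutive pair $(w_j,w_{j+1})$ of some path of $\Q$ is less than $C\eps(s)kn^{1/\ell}$. Starting from $\R$, I would repeatedly pick (while possible) a position $j$ and a vertex $v$ deficient at $j$ in the current collection and delete every current path whose $j$-th vertex is $v$; let $\R'$ be the collection when no such pair remains. Each step deletes at least one path, so the process terminates, and by construction no vertex of $\R'$ is deficient at any position. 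Taking $A_i$ to be the set of vertices occurring at position $i$ in $\R'$ (so $A_0 = \{u\}$), we have $A_s \subseteq B$, hence $|A_s| \le k^{(\ell-s)/(\ell-1)} n^{s/\ell}$, while for every $i \in [s]$ and every $v \in A_{i-1}$ the non-deficiency of $v$ at position $i-1$ exhibits at least $C\eps(s)kn^{1/\ell}$ neighbours of $v$ inside $A_i$; thus $(A_1,\ldots,A_s)$ is a concentrated $s$-neighbourhood of $u$, and therefore $t(u) \le s$. (For $s=1$ the hypotheses cannot hold once $k_0$ is large, since then $|\R| \le |B| \le n^{1/\ell} < 2C\eps(1)kn^{1/\ell}$; so we may assume $s \ge 2$, which is all that is needed for this last deduction.)

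Everything therefore reduces to showing that the pruning does not delete all of $\R$. Since the branching factor of any sub-collection of $\R$ is at most $kn^{1/\ell}$, there are at most $(kn^{1/\ell})^j$ distinct length-$j$ initial segments among the paths of $\R$; and when we delete the paths through a vertex $v$ deficient at position $j$, each such path is determined by its length-$j$ initial segment (which ends at $v$) together with its continuation from $v$ onwards, of which there are fewer than $C\eps(s)kn^{1/\ell}\cdot(kn^{1/\ell})^{s-j-1} = C\eps(s)(kn^{1/\ell})^{s-j}$. The naive estimate ``per-step bound times number of steps'' is too weak here, so instead I would group the steps by the position $j$ at which they act: once $v$ has been deleted at position $j$ it never reappears there, so the vertices deleted at a fixed $j$ are distinct, and the families of length-$j$ initial segments charged to them are pairwise disjoint subfamilies of the at most $(kn^{1/\ell})^j$ segments of that length. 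Hence the number of paths deleted in steps acting at position $j$ is less than $(kn^{1/\ell})^j \cdot C\eps(s)(kn^{1/\ell})^{s-j} = C\eps(s)(kn^{1/\ell})^s$, and summing over the $s$ possible values of $j$ shows that fewer than $Cs\eps(s)(kn^{1/\ell})^s$ paths are deleted in total. As $|\R| \ge 2Cs\eps(s)(kn^{1/\ell})^s$, the collection $\R'$ is non-empty, which completes the argument. The one real obstacle is this counting step — specifically, the need to charge deletions to initial segments grouped by level, rather than bounding the (potentially very numerous) individual deletion steps one at a time.
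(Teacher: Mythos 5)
Your argument is correct and is essentially the proof given in the paper: both prune $\R$ by repeatedly deleting every path through any vertex whose forward branching at its position (in the surviving paths) falls below $C\eps(s)kn^{1/\ell}$, bound the total number of deleted paths by $Cs\cdot\eps(s)\big(kn^{1/\ell}\big)^s \le |\R|/2$ using the branching factor at most $kn^{1/\ell}$ at all other positions (your grouping of deletions by level merely makes explicit the one-line count in the paper), and then read off a concentrated $s$-neighbourhood of $u$ from the surviving levels. The only slip is in your parenthetical on $s=1$: there $|B| \le kn^{1/\ell}$ rather than $n^{1/\ell}$, and since $2C\eps(1) \ll 1$ the hypotheses are in fact satisfiable for $s=1$, so that case is not vacuous; however, as $t(u)$ is by definition the minimal $t \ge 2$ admitting a concentrated $t$-neighbourhood, this boundary case is equally passed over in silence by the paper's own proof, and your main argument is unaffected.
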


\begin{proof}
For each $0 \le i \le s$, set $A_i$ equal to the collection of $i$th vertices of the paths in a collection $\R' \subset \R$ obtained as follows: simply remove (repeatedly) from each $A_i$ (with $0 \le i < s$), any vertex whose $i$-branching factor (in the remaining paths) is less than $C \eps(s) k n^{1/\ell}$, and remove from $\R$ all paths with this as their $i$th vertex. If $\R'$ is non-empty, then it follows by construction that $\A = (A_1, \ldots, A_s)$ is a concentrated $s$-neighbourhood of~$u$, and thus $t(x) \le s$, as claimed.

It therefore suffices to show that not all paths are destroyed in the process described above. To see this, simply note that the number of paths destroyed is at most
$$Cs \cdot \eps(s) \big( k n^{1/\ell} \big)^s \, \le \, |\R| / 2,$$ 
since each destroyed path passes through a vertex of branching factor at most $C \eps(s) k n^{1/\ell}$, and the branching factor of every other vertex is at most $k n^{1/\ell}$. 
\end{proof}

We are now ready to prove Lemma~\ref{prop:RNF:exists}.

\begin{proof}[Proof of Lemma~\ref{prop:RNF:exists}]
Let $\A$ be a concentrated $t$-neighbourhood of $x$, where $t(x) = t(G) = t$. We may and will assume that $|A_1| \le kn^{1/\ell}$, since if~$A_1$ is larger than this, then we can remove vertices from~$A_1$ without destroying the concentrated neighbourhood property. Furthermore since $\A$ is a concentrated $t$-neighbourhood, we have that $|A_t| \le k^{(\ell-t)/(\ell-1)}n^{t/\ell}$.

For every $i \in [t]$ and every $v \in A_{i-1}$, where $A_0 = \{x\}$, select an arbitrary subset
\[
 Q_i(v) \subset N(v) \cap A_i
\]
of size $|Q_i(v)| = C \eps(t) k n^{1/\ell}$. Let~$\Q$ be the set of all paths of the form $(x,u_1, \ldots, u_t)$, generated as follows: For each $i=1,2, \ldots, t$, select $u_i \in Q_i(u_{i-1})$ satisfying\footnote{Recall that $L^{(1)}_\F\big( \cdot \big)$ is a collection of edges of $G$.}
$$u_i \not\in V(S_i)
\qquad\mbox{ and }\qquad
u_{i-1}u_i \not\in L^{(1)}_\F\big( E(S_i) \big),$$
where $S_i$ is the path $(x,u_1, \ldots, u_{i-1})$.

We claim that
\begin{equation}
 \label{eq:initial_Q_is_large}
 |\Q| \ge \frac{3}{4} \big( C\eps(t) kn^{1/\ell} \big)^t.
\end{equation}
To see this, simply recall that $\big| L^{(1)}_\F\big( E(S_i) \big) \big| \le 2^{5\ell} \delta k^{\ell/(\ell-1)}$, by Lemma~\ref{lem:size_of_link}, and so the number of choices for the vertex~$u_i$ is at least
\[
 C \eps(t) k n^{1/\ell} - 2\ell - 2^{5\ell} \delta k^{\ell/(\ell-1)} \, \ge \, \frac{C \eps(t)}{(4/3)^{1/t}} \cdot k n^{1/\ell},
\]
where we used the inequalities $\delta \ll \eps(t)$ and $k \le n^{(\ell-1)/\ell}$.

We now remove some paths from the collection~$\Q$ to produce the collection~$\P$. If there exists $0 \le i < j \le t$ with $(i,j) \ne (0,t)$ and vertices $u \in A_i$ and $v \in A_j$ such that
 \begin{equation}\label{eq:pathalg1}
 \big| \Q_{i,j}[u \to v] \big| \, > \, k^{(j-i-1)\ell/(\ell-1)},
\end{equation}
then choose a path $Q = (x,u_1,\ldots,u_t) \in \Q$ with $u_i = u$ and $u_j = v$, and remove $Q$ from $\Q$. Repeat this until there are no such paths remaining in~$\Q$, and let $\P$ be the set of paths remaining at the end. By construction, $\P$ satisfies conditions~$(ii)$ and~$(iii)$ of Definition~\ref{def:balanced:nbhd}, and moreover it avoids~$\F$, since none of the edges of $G$ are saturated (by~\eqref{eq:Delta1_bounded}), and since we chose $u_i$ so that $u_{i-1}u_i \not\in L^{(1)}_\F\big( E(S_i) \big)$. 

It only remains to prove~\eqref{eq:number:of:paths}; to do so, we will use the fact that $t(x) = t(G)$ to bound the number of paths removed from $\Q$. Given $0 \le i < j \le t$, let us say that an ordered pair of vertices $(u,v)$ is $(i,j)$-{\em unbalanced} if~\eqref{eq:pathalg1} holds (in the original family $\Q$), and set 
$$\R(i,j) \,=\, \big\{ Q = (x,u_1,\ldots,u_j) \in \Q_{0,j} \,:\, (u_i, u_j) \textup{ is $(i,j)$-unbalanced} \big\}.$$
We claim that
\begin{equation}
 \label{eq:size_of_Rij}
 |\R(i,j)| \le \ds\frac{\big( C\eps(t)kn^{1/\ell} \big)^j}{4t^2}
\end{equation}
for every $0 \le i < j \le t$ with $(i,j) \ne (0,t)$. To prove~\eqref{eq:size_of_Rij}, note first that if $s := j - i = 1$, then $\R(i,j) = \emptyset$ (since no pair of vertices $(u,v)$ can be $(i,i+1)$-unbalanced), so we may assume that $s \ge 2$. Next, observe that
$$|\R(i,j)| \le \sum_{u \in A_i} \big| \R(i,j)_{0,i}[x \to u] \big| \cdot \big| \R(i,j)_{i,j}[u \to A_j] \big|,$$
and that $\sum_{u \in A_i} |\R(i,j)_{0,i}[x \to u]| \le \big( C \eps(t)kn^{1/\ell} \big)^i$, since $\R(i,j)_{0,i} \subset \Q_{0,i}$ and so we have at most $C \eps(t)kn^{1/\ell}$ choices at each step. Hence, if $|\R(i,j)| \ge (1/4t^2) \big( C \eps(t)kn^{1/\ell} \big)^j$, then there must exist a vertex $u \in A_i$ such that
\begin{equation}\label{eq:Rpaths:utoBj}
\big| \R(i,j)_{i,j}[u \to A_j] \big| \, \ge \, \frac{1}{4t^2} \cdot \big( C \eps(t) kn^{1/\ell} \big)^s \, \ge \, 2Ct \cdot \eps(s) \big( kn^{1/\ell} \big)^s
\end{equation}
since $2 \le s \le t - 1$, and therefore $C \eps(t)^{s} \ge 8t^3 \cdot \eps(s)$. We will show that $t(u) < t(x)$.

To do so, we will apply Lemma~\ref{lem:paths_to_concentrated_nbd} to the collection $\R(i,j)_{i,j}[u \to A_j]$. Recall that $s \ge 2$, and note that the required bounds on the number of paths and the branching factor follow from~\eqref{eq:Rpaths:utoBj} and the definition of $\Q$ respectively. However, we also require an upper bound on the size of the set
$$B \, := \, \big\{ u_j \in A_j \,:\, \textup{there exists a path } (x,u_1,\ldots,u_j) \in \R(i,j) \textup{ with } u_i = u \big\},$$
that is, the set of end-vertices of the paths in $\R(i,j)$ whose $i$th vertex is $u$. Observe that, since each pair $(u,u_j)$ is unbalanced, we have 
\begin{equation}\label{eq:boundingRju}
|B| \, \le \, \frac{ \big( C \eps(t) k n^{1/\ell} \big)^{j-i} }{k^{(j-i-1)\ell/(\ell-1)} } \, \le \, k^{(\ell-j+i)/(\ell-1)}n^{(j-i)/\ell}.
\end{equation}
By Lemma~\ref{lem:paths_to_concentrated_nbd}, it follows from~\eqref{eq:Rpaths:utoBj} and~\eqref{eq:boundingRju} that $t(u) \le s < t(x)$, as claimed. This contradicts the minimality of $t(x)$, and hence proves~\eqref{eq:size_of_Rij}. 

Finally, it follows from~\eqref{eq:size_of_Rij}, and the branching factor of $\Q$, that at most
$$\sum_{i,j} \big| \R(i,j) \big| \cdot \big( C \eps(t) k n^{1/\ell} \big)^{t-j} \, \le \, \frac{1}{4} \cdot \big( C\eps(t)kn^{1/\ell} \big)^t$$
paths are removed. Combining this with~\eqref{eq:initial_Q_is_large} gives~\eqref{eq:number:of:paths}, as required.
\end{proof}

Before moving on to the meat of the proof -- the construction of the family of cycles $\C$ -- let us note two simple but key properties of balanced neighbourhoods.

\begin{lemma} \label{lem:counting_paths_through_vertices}
Let $x \in V(G)$, and let $(\A,\P)$ be a balanced $t$-neighbourhood of $x$. Let $w \in A_t$, and let $v \in V(G) \setminus \{x,w\}$. There are at most
 \[
 \ell\cdot k^{(t-2)\ell/(\ell-1)}
 \]
paths $P \in \P[x \to w]$ with $v \in V(P)$.
\end{lemma}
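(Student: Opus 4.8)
Looking at Lemma~\ref{lem:counting_paths_through_vertices}, I want to bound the number of paths $P \in \P[x \to w]$ (paths of length $t$ from $x$ to a fixed $w \in A_t$) that pass through a fixed vertex $v \notin \{x,w\}$.

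The plan is to split a path $P \in \P[x \to w]$ with $v \in V(P)$ according to the position of $v$ on the path. Since $P = (x, u_1, \ldots, u_t)$ with $u_i \in A_i$, the vertex $v$ must be $u_r$ for some $r \in [t-1]$ (it cannot be $x = u_0$ or $w = u_t$ by hypothesis). Actually one subtlety: $v$ could in principle occupy several positions if the $A_i$ overlap, but since $P$ is a path its vertices are distinct, so $v = u_r$ for exactly one value of $r$. Thus I would write
$$\big| \{ P \in \P[x \to w] : v \in V(P) \} \big| \, \le \, \sum_{r=1}^{t-1} \big| \P_{0,r}[x \to v] \big| \cdot \big| \P_{r,t}[v \to w] \big|,$$
where for each $r$ I've used that a path through $v$ at position $r$ decomposes into its initial segment in $\P_{0,r}[x\to v]$ and its terminal segment in $\P_{r,t}[v \to w]$, and the count of such $P$ is at most the product of the counts of the two segments.

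Next I would apply condition~$(ii)$ of Definition~\ref{def:balanced:nbhd} to each factor. For the initial segment, the pair $(0,r)$ with $0 \le 0 < r \le t$: as long as $(0,r) \ne (0,t)$, i.e. $r < t$, which holds since $r \le t-1$, we get $|\P_{0,r}[x \to v]| \le k^{(r-1)\ell/(\ell-1)}$. For the terminal segment, the pair $(r,t)$ with $0 \le r < t \le t$: here $(r,t) = (0,t)$ only if $r=0$, but $r \ge 1$, so again $(r,t) \ne (0,t)$ and we get $|\P_{r,t}[v \to w]| \le k^{(t-r-1)\ell/(\ell-1)}$. Multiplying, each summand is at most $k^{(r-1)\ell/(\ell-1)} \cdot k^{(t-r-1)\ell/(\ell-1)} = k^{(t-2)\ell/(\ell-1)}$, independent of $r$. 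Summing over the $t-1 \le \ell - 1 < \ell$ values of $r$ gives the bound $\ell \cdot k^{(t-2)\ell/(\ell-1)}$, as required. (Here I use $t \le \ell$, which holds by Lemma~\ref{lem:concentrated:exists} since $t = t(G) \le \ell$.)

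This argument is essentially a clean two-step decomposition plus a direct application of the balanced-neighbourhood axiom, so there is no real obstacle; the only points requiring a moment's care are (i) verifying that neither index pair coincides with the forbidden $(0,t)$ — which is exactly why the hypotheses $v \ne x$ and $v \ne w$ are imposed, forcing $1 \le r \le t-1$ — and (ii) noting that the number of admissible positions $r$ is at most $t-1 \le \ell - 1$, which keeps the final constant at $\ell$ rather than something larger.
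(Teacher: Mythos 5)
Your proof is correct and is essentially identical to the paper's: the same decomposition over the position $r$ of $v$ on the path, the same application of condition~$(ii)$ of Definition~\ref{def:balanced:nbhd} to each of the two segments, and the same final estimate $(t-1)\cdot k^{(t-2)\ell/(\ell-1)} \le \ell\cdot k^{(t-2)\ell/(\ell-1)}$. Your extra checks that neither index pair equals the forbidden $(0,t)$ are a worthwhile addition the paper leaves implicit.
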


\begin{proof}
For each $1 \le j \le t - 1$, we count the number of paths $(x,u_1,\ldots,u_{t-1},w)$ in $\P[x \to w]$ such that $u_j = v$. By property~$(ii)$ of Definition~\ref{def:balanced:nbhd}, we obtain a bound of 
$$\sum_{j = 1}^{t-1} |\P_{0,j}[x \to v]| \cdot |\P_{j,t}[v \to w]| \, \le \, (t-1) \cdot k^{(t-2)\ell/(\ell-1)} \, \le \, \ell \cdot k^{(t-2)\ell/(\ell-1)},$$ 
as claimed.
\end{proof}

\begin{lemma} \label{lem:counting_paths_through_sets}
Let $x \in V(G)$, and let $(\A,\P)$ be a balanced $t$-neighbourhood of $x$. Let $\sigma \subset E(G)$ with $1 \le |\sigma| \le t - 1$. For each $w \in A_t$, there are at most
 \[
 t^t \cdot k^{(t-|\sigma|-1)\ell/(\ell-1)}
 \]
 paths $P \in \P[x \to w]$ with $\sigma \subset E(P)$.
\end{lemma}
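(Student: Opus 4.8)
The plan is to reduce Lemma~\ref{lem:counting_paths_through_sets} to Lemma~\ref{lem:counting_paths_through_vertices} by a straightforward counting argument over the way in which the $|\sigma|$ edges of $\sigma$ can sit inside a path of $\P[x \to w]$. Fix $w \in A_t$ and a set $\sigma = \{e_1,\ldots,e_s\} \subset E(G)$ with $s = |\sigma| \le t-1$. If a path $P = (x,u_1,\ldots,u_{t-1},w) \in \P[x \to w]$ contains all of $\sigma$, then each edge $e_a$ appears as some edge $\{u_{i-1},u_i\}$ of $P$ (with the convention $u_0 = x$, $u_t = w$); since $P$ has $t$ edges and these occur in a fixed linear order, the edges of $\sigma$ occupy $s$ of the $t$ edge-slots. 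First I would fix a choice of which slots $\sigma$ occupies and in which order its edges appear along the path: there are at most $\binom{t}{s} \le t^s$ ways to choose the slots, and then the edges of $\sigma$ must be traversed in the unique order consistent with the path being a path (in fact once the slots are fixed and we know the path is a genuine path, consecutive chosen edges that are adjacent slots must share endpoints), so I would simply bound the number of orderings crudely by $s! \le t^s$, giving at most $t^{2s} \le t^{2(t-1)} \le t^t/(\text{small factor})$ — I will just be slightly wasteful and absorb everything into $t^t$.

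Having fixed the slots and the order, the edges of $\sigma$ pin down $2s$ of the vertices $u_0,\ldots,u_t$ (with possible coincidences, which only helps), and in particular they break the path into at most $s+1$ vertex-disjoint sub-paths, each running between two vertices that are now specified. The key step is then to count extensions: the number of paths in $\P[x \to w]$ whose vertices in the prescribed slots take the prescribed values is at most a product, over the at most $s+1$ "gaps", of $|\P_{i,j}[u \to v]|$ for the relevant indices $i<j$ and pinned vertices $u,v$. By property~$(ii)$ of Definition~\ref{def:balanced:nbhd} (inequality~\eqref{eq:def:prop5}), each such factor is at most $k^{(j-i-1)\ell/(\ell-1)}$, and the exponents $(j-i-1)$ over all gaps sum to at most $t - s - 1$ (the total number of edge-slots is $t$, we have used $s$ of them for $\sigma$, and the boundary indices of consecutive gaps meet), so the product of these factors is at most $k^{(t-s-1)\ell/(\ell-1)}$. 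Care is needed when a pinned edge is at the very start (slot $1$) or end (slot $t$), or when two pinned edges are consecutive slots — but in each of these cases the corresponding "gap" is empty or of length one and contributes a factor of $1$, which is consistent with the claimed exponent, so it costs nothing.

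Multiplying the number of slot-and-order choices (at most $t^t$, after the wasteful absorption above) by the bound $k^{(t-|\sigma|-1)\ell/(\ell-1)}$ on the number of extensions for each such choice gives the claimed bound of $t^t \cdot k^{(t-|\sigma|-1)\ell/(\ell-1)}$ on $|\{P \in \P[x \to w] : \sigma \subset E(P)\}|$. I do not expect any real obstacle here; the only mildly delicate point is the bookkeeping of exponents when the pinned edges abut each other or the endpoints $x, w$, and making sure the degenerate gaps are handled so that the sum of the $(j-i-1)$ telescopes correctly to at most $t-|\sigma|-1$. The case $|\sigma| = 1$ is exactly Lemma~\ref{lem:counting_paths_through_vertices} applied to the two endpoints of the single edge (and indeed that lemma's proof already illustrates the "split the path at a pinned vertex and apply property~$(ii)$ to each piece" idea), so this lemma is just the natural iteration of that argument.
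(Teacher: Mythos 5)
Your argument is essentially the paper's own proof: fix how the edges of $\sigma$ sit inside the path (at most roughly $t^t$ placements), note that this pins down the intermediate vertices and splits the path into gaps, bound each gap by property~$(ii)$ of Definition~\ref{def:balanced:nbhd}, and observe that the exponents telescope to at most $t-|\sigma|-1$. The only (harmless) slack is in the combinatorial prefactor — your crude $\binom{t}{s}\cdot s!\le t^{2s}$ can nominally exceed $t^t$, though the sharper count $t!/(t-s)!\le t^{t-1}$ (times $2^s$ for edge orientations) is what one actually needs, and in any case the constant is irrelevant to every application of the lemma.
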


\begin{proof}
When $|\sigma| = t-1$ the result is trivial, since there is at most one path $P \in \P[x \to w]$ with $\sigma \subset E(P)$. Therefore, let us assume that $1 \le |\sigma| \le t - 2$. We will count the number of paths in $\P[x \to w]$ with $\sigma \subset E(P)$ as follows. Observe that, if $\sigma \subset E(P)$ for some path $P \in \P$, then $\sigma$ may be decomposed into a sequence of paths between disjoint collections of sets in $\A$. Since property~$(ii)$ of Definition~\ref{def:balanced:nbhd} gives us a bound on the number of ways of travelling between a given vertex of $A_i$ and a given vertex of $A_j$ along a path of $\P$, it is easy to deduce the claimed bound.

To spell out the details, let $\sigma = \sigma_1 \cup \ldots \cup \sigma_r$, where $\sigma_q$ is a path between $u_q \in A_{i(q)}$ and $v_q \in A_{j(q)}$, and $i(q) < j(q) < i(q') < j(q')$ for each $1 \le q < q' \le r$. Note that the sequence $\big( i(1), j(1), \ldots, i(r), j(r) \big)$ might not be unique, since the sets of $\A$ may not be disjoint, but in any case there are at most $t^t$ possible sequences for each set $\sigma$. By~property~$(ii)$ of Definition~\ref{def:balanced:nbhd}, and setting $v_0 = x$, $u_{r+1} = w$, $j(0) = 0$ and $i(r+1) = t$, we have
$$\prod_{q=0}^r \big| \P_{j(q),i(q+1)}[v_q \to u_{q+1}] \big| \, \le \, \big( k^{\ell / (\ell-1)} \big)^{\sum_{q=0}^r \max\{ i(q+1) - j(q) - 1, 0\}}.$$
Since $\sum_{q=0}^r \big( i(q+1) - j(q) \big) = t - |\sigma|$, it follows that the number of paths in~$\P[x \to w]$ containing~$\sigma$ is at most
\[
t^t \cdot k^{(t-|\sigma|-1)\ell/(\ell-1)},
\]
as required. 
\end{proof}

\subsection{Refined \texorpdfstring{$t$}{t}-neighbourhoods}

In this section we will show how to `refine' a balanced $t$-neighbourhood so that each vertex has a sufficiently large neighbourhood, and each vertex of the final set receives many paths, without destroying too many of the paths of $\P$.  

\begin{defn}[Refined $t$-neighbourhood]\label{def:refined:nbhd}
Let $(\B, \Q)$ be a balanced $t$-neighbourhood of $x$. We say that $(\B, \Q)$ form a {\em refined $t$-neighbourhood} of $x$ if the following conditions also hold:
\begin{enumerate}
\item[$(i)$]
For every $i \in \{0,1,\ldots,t-1\}$ and every $u \in B_i$, 
$$|N(u)\cap B_{i+1}| \, \ge \, \eps(t) k n^{1/\ell}.$$
\item[$(ii)$]
For every $v \in B_t$,
$$|N(v)\cap B_{t-1}| \, \ge \, \eps(t)^2 k^{\ell/(\ell-1)}.$$
 \item[$(iii)$]
For every $v \in B_t$,
$$|\Q[x \to v]| \, \ge \, \eps(t)^t k^{(t-1)\ell/(\ell-1)}.$$
\end{enumerate}
\end{defn}

The properties above will allow us to find many cycles through $x$ of the form described earlier (a path out from $x$, a zig-zag path, and a path back to $x$). The following lemma allows us to find a refined neighbourhood inside a balanced neighbourhood. Recall that $t = t(G)$.

\begin{lemma}\label{lem:balanced_to_refined}
If $(\A, \P)$ is a balanced $t$-neighbourhood of $x$, and 
$$|\P| \, \ge \, \frac{C^t}{2} \big( \eps(t) kn^{1/\ell} \big)^t,$$
then there exists a refined $t$-neighbourhood $(\B, \Q)$ of $x$, with $\B \prec \A$ and $\Q \subset \P$.
\end{lemma}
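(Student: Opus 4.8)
The plan is to obtain the refined $t$-neighbourhood by iteratively deleting "bad" vertices from a balanced $t$-neighbourhood and showing that this deletion process cannot remove too many paths, so that in particular it does not terminate with $\Q$ empty. We start with $(\A,\P)$ as given, and repeatedly apply the following cleaning steps in some fixed order until none applies: (1) if some $i \in \{0,\ldots,t-1\}$ and $u \in A_i$ has $|N(u) \cap A_{i+1}| < \eps(t) k n^{1/\ell}$, delete $u$ from $A_i$ and delete every path of $\P$ passing through $u$ at level $i$; (2) if some $v \in A_t$ has $|N(v)\cap A_{t-1}| < \eps(t) k^{\ell/(\ell-1)}$, delete $v$ from $A_t$ and all paths of $\P$ ending at $v$; (3) if some $v \in A_t$ has $|\P[x \to v]| < \eps(t)^t k^{(t-1)\ell/(\ell-1)}$, delete $v$ from $A_t$ and all paths ending at $v$. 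Call the result $(\B,\Q)$. By construction $\B \prec \A$, $\Q \subseteq \P$, and $(\B,\Q)$ inherits all the balanced-neighbourhood conditions from $(\A,\P)$ (conditions $(i)$–$(iii)$ of Definition~\ref{def:balanced:nbhd} are monotone under taking subsets of the $A_i$ and of $\P$). So the only thing to check is that $\Q$ is non-empty — equivalently, that the total number of paths destroyed is strictly less than $|\P|$ — and that, having terminated, no cleaning rule applies, which is exactly conditions $(i)$–$(iii)$ of Definition~\ref{def:refined:nbhd}.

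The heart of the argument is thus bounding the number of destroyed paths from each of the three rules. For rule (3) this is immediate: each of the at most $|A_t| \le k^{(\ell-t)/(\ell-1)} n^{t/\ell}$ vertices $v$ of the top level is charged at most $\eps(t)^t k^{(t-1)\ell/(\ell-1)}$ paths, and one checks $k^{(\ell-t)/(\ell-1)} n^{t/\ell} \cdot \eps(t)^t k^{(t-1)\ell/(\ell-1)} = \eps(t)^t (kn^{1/\ell})^t \ll \frac{C^t}{8}(\eps(t)kn^{1/\ell})^t$, using $C = 10\ell$ large. For rules (1) and (2) the point is that a vertex deleted at level $i$ was, at the moment of deletion, the $i$th vertex of only "few" surviving paths, because by the balanced-neighbourhood bound (property $(ii)$ of Definition~\ref{def:balanced:nbhd}) the number of paths of $\P$ through a given vertex at a given level is controlled by the number of ways of extending forward and backward, and one such extension step out of the deleted vertex now has fewer than $\eps(t) k n^{1/\ell}$ (resp. $\eps(t) k^{\ell/(\ell-1)}$) available choices; combined with the branching-factor bound $C\eps(t)kn^{1/\ell}$ from property $(iii)$, the count of paths lost per deleted vertex at level $i$ is at most roughly $\eps(t) \cdot (C\eps(t)kn^{1/\ell})^{t-1}$ or a comparable quantity, and summing over the at most $|A_i| \le k n^{1/\ell}$ (for $i=1$) or the relevant bound on $|A_i|$ for intermediate $i$, one gets a total of at most, say, $\frac{C^t}{8}(\eps(t)kn^{1/\ell})^t$ across all vertices and all levels. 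Adding the three contributions gives strictly less than $\frac{C^t}{2}(\eps(t)kn^{1/\ell})^t \le |\P|$ destroyed paths, so $\Q \ne \emptyset$ and, since $\Q$ restricted to each level is still non-empty, the final $B_i$ are non-empty too.

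The main obstacle I anticipate is the bookkeeping for rules (1) and (2): the cleaning is iterative, so when a vertex is deleted the number of surviving paths through it has already been reduced by earlier deletions, and one must be careful to charge each destroyed path to a unique deletion event (the first rule application that removes it) and to bound, at that event, the number of paths through the deleted vertex using only the \emph{original} balanced-neighbourhood data — which is legitimate since all the relevant upper bounds in Definition~\ref{def:balanced:nbhd} are preserved under passing to subcollections. One also has to be slightly careful about the intermediate levels $1 \le i \le t-1$: property $(i)$ of Definition~\ref{def:balanced:nbhd} only bounds $|A_1|$ and $|A_t|$, so for an interior level one bounds the number of destroyed paths by splitting as (paths from $x$ to the deleted vertex $u$) $\times$ (at most $\eps(t)kn^{1/\ell}$ choices for the next vertex, by rule (1)) $\times$ (forward extensions of length $t-i-1$, each with $\le C\eps(t)kn^{1/\ell}$ branching choices), and $\sum_{u \in A_i}|\P[x\to u]| \le (C\eps(t)kn^{1/\ell})^i$ by property $(iii)$; this gives $\le \eps(t)(C\eps(t)kn^{1/\ell})^{t-1} \cdot (kn^{1/\ell})$... — i.e. an extra factor $\eps(t)$ relative to the target $(\eps(t)kn^{1/\ell})^t$, which is exactly the slack that makes the sum over all $i \in \{0,\ldots,t-1\}$ converge, at the cost of choosing $\eps(t)$ (equivalently $C$) appropriately. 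I would organise the write-up as: (a) define the cleaning process and note monotonicity of Definition~\ref{def:balanced:nbhd}; (b) bound losses from rule (3); (c) bound losses from rules (1) and (2) level by level; (d) sum and conclude $\Q \neq \emptyset$, hence $(\B,\Q)$ is the desired refined $t$-neighbourhood.
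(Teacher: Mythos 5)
Your overall architecture (iteratively delete offending vertices, bound the number of destroyed paths, conclude $\Q \neq \emptyset$) matches the paper's, and your treatment of rules (1) and (3) is essentially the paper's: for rule (3) one multiplies the threshold by $|A_t| \le k^{(\ell-t)/(\ell-1)}n^{t/\ell}$, and for rule (1) the deleted vertex contributes a forward step with only $\eps(t)kn^{1/\ell}$ choices instead of $C\eps(t)kn^{1/\ell}$, saving a factor of $C$ per level. (Your inclusion of $i=0$ in rule (1) is a harmless variant: the paper instead derives $|N(x)\cap B_1|\ge \eps(t)kn^{1/\ell}$ at the end from the surviving path count and the branching factor.)

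However, there is a genuine gap at rule (2), and it is the heart of the lemma. The naive count fails there. A vertex $v\in A_t$ deleted by rule (2) has fewer than $\eps(t)k^{\ell/(\ell-1)}$ back-neighbours in $A_{t-1}$, and by property $(ii)$ of Definition~\ref{def:balanced:nbhd} each back-neighbour $u$ receives at most $k^{(t-2)\ell/(\ell-1)}$ paths from $x$, so the number of paths ending at $v$ is at most $\eps(t)k^{(t-1)\ell/(\ell-1)}$; summing over $|A_t| \le k^{(\ell-t)/(\ell-1)}n^{t/\ell}$ deleted vertices gives only $\eps(t)\big(kn^{1/\ell}\big)^t$, which exceeds the target $C^{t-1}\big(\eps(t)kn^{1/\ell}\big)^t$ by a factor of order $\eps(t)^{-(t-1)}C^{-(t-1)} \gg 1$ (recall $\eps(t) \le 1/C^2$). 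Unlike rule (1), the "cheap" step here is not measured against a forward branching factor, so nothing telescopes; it is genuinely possible, for a balanced $t$-neighbourhood in the abstract, that essentially \emph{all} paths end at vertices with small back-degree. The paper escapes this by using the standing hypothesis $t = t(x) = t(G)$ (which your argument never invokes): it defines $Y \subset A_{t-1}$ to be the penultimate vertices of high $(t-1)$-branching factor among the destroyed paths, bounds $|Y| \le k^{(\ell-t+1)/(\ell-1)}n^{(t-1)/\ell}$ by double-counting edges between $Y$ and the deleted set $Z$, bounds the destroyed paths avoiding $Y$ by a branching argument, and then shows that if too many destroyed paths pass through $Y$, Lemma~\ref{lem:paths_to_concentrated_nbd} yields a concentrated $(t-1)$-neighbourhood of $x$, contradicting the minimality of $t(x)$. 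Without this step (or some substitute using the minimality of $t$), your proof does not go through.
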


\begin{proof}[Proof of Lemma~\ref{lem:balanced_to_refined}]
Repeatedly perform the following three steps until no further vertices are removed.
\begin{enumerate}
  \item[1.] If there exists $i \in \{1,\ldots,t-1\}$ and a vertex $v \in A_i$ with 
 $$|N(v)\cap A_{i+1}| < \eps(t) kn^{1/\ell},$$
 then remove $v$ from $A_i$, and remove all paths $P = (x,u_1,\ldots,u_t) \in \P$ with $u_i = v$.
\item[2.] If there exists a vertex $v \in A_t$ with
 $$|N(v)\cap A_{t-1}| < \eps(t)^2 k^{\ell/(\ell-1)},$$
 then remove $v$ from $A_t$, and remove all paths $P = (x,u_1,\ldots,u_t) \in \P$ with $u_t = v$.
   \item[3.] If there exists a vertex $v \in A_t$ with 
 $$|\P[x \to v]| < \eps(t)^t k^{(t-1)\ell/(\ell-1)},$$
 then remove $v$ from $A_t$, and remove all paths $P = (x,u_1,\ldots,u_t) \in \P$ with $u_t = v$.
\end{enumerate}
Let~$\B \prec \A$ and $\Q \subset \P$ be the collection of sets and paths at the end of this process.

We claim that $(\B,\Q)$ is a refined $t$-neighbourhood of $x$. Indeed, properties~$(ii)$ and~$(iii)$ of Definition~\ref{def:refined:nbhd}, and also property~$(i)$ for $i \neq 0$, hold by construction, since we would have removed any offending vertex or path. It therefore only remains to prove that property~$(i)$ holds for $i=0$, i.e., that $|N(x) \cap B_1| \ge \eps(t) k n^{1/\ell}$. 

In order to prove this, we will in fact show that 
\begin{equation}\label{eq:ref:numberofpathsinP:repeat}
|\Q| \, \ge \, \frac{C^t}{4} \big( \eps(t) kn^{1/\ell} \big)^t.
\end{equation}
Since, by property~$(iii)$ of Definition~\ref{def:balanced:nbhd}, the branching factor of~$\P$ is at most $C\eps(t)kn^{1/\ell}$, the required bound on the degree of $x$ follows immediately. In order to prove~\eqref{eq:ref:numberofpathsinP:repeat}, we shall consider each of the three steps, showing for each that not too many paths are destroyed.

We claim first that few paths are removed in Steps~1 and~3. Indeed, again using our bound on the branching factor of $\P$, it follows that in Step~1 we remove at most
\[
t \cdot \eps(t) k n^{1/\ell} \cdot \big( C \eps(t) k n^{1/\ell} \big)^{t-1} \, = \, t \cdot C^{t-1} \big( \eps(t) k n^{1/\ell} \big)^t
\]
paths from~$\P$, and in Step~3 we remove at most
\[
 \eps(t)^t k^{(t-1)\ell/(\ell-1)} |A_t| \, \le \, \big( \eps(t) k n^{1/\ell} \big)^t
\]
paths from~$\P$, since $|A_t| \le k^{(\ell-t)/(\ell-1)}n^{t/\ell}$, by property~$(i)$ of Definition~\ref{def:balanced:nbhd}.

Finally, we claim that at most $2 C^{t-1} \big( \eps(t)kn^{1/\ell} \big)^t$ paths are destroyed in Step~2; to prove this we will again use the minimality of $t = t(x)$. Indeed, let $Z \subset A_t$ and $\P(Z)$ denote the collection of vertices and paths (respectively) removed in Step~2, and consider the set 
$$Y \, = \, \big\{ v \in A_{t-1} \,:\, v \textup{ has $(t-1)$-branching factor at least $\eps(t) k n^{1/\ell}$ in } \P(Z) \big\}.$$
Summing the $(t-1)$-branching factors in $\P(Z)$ of the vertices in $Y$, observing that each vertex of $Z$ contributes at most $\eps(t)^2 k^{\ell/(\ell-1)}$ to this sum\footnote{Note that this is true even if the vertex has more than this many neighbours in the original set $A_{t-1}$.}, and recalling that $|Z| \le |A_t| \le k^{(\ell-t)/(\ell-1)} n^{t/\ell}$, we obtain
\begin{equation}\label{eq:Ybound}
|Y| \, \le \, \frac{|Z| \cdot \eps(t)^2 k^{\ell/(\ell-1)}}{\eps(t) k n^{1/\ell}} \, \le \, \eps(t) k^{(\ell - t + 1)/(\ell-1)} n^{(t-1)/\ell}.
\end{equation}
Moreover, by the definition of $Y$ and our bound on the branching number of $\P$, at most $C^{t-1} (\eps(t) k n^{1/\ell})^t$ paths in~$\P(Z)$ have their penultimate vertex in~$A_{t-1} \setminus Y$ and final vertex in~$Z$. Hence, if more than $2 C^{t-1} \big( \eps(t)kn^{1/\ell} \big)^t$ paths were destroyed via the removal of vertices in $Z$, then there exists a set of $C^{t-1}  \big( \eps(t) k n^{1/\ell} \big)^{t}$ paths in $\P(Z)$ whose penultimate vertex is in $Y$, and hence (again using our bound on the branching number) there exists a set of at least
\begin{equation}\label{eq:numberofpathsthroughY}
\frac{C^{t-1}  \big( \eps(t) k n^{1/\ell} \big)^{t}}{C \eps(t)  k n^{1/\ell} }  \, \ge \, 2C t \cdot \eps(t-1) (k n^{1/\ell})^{t-1}
\end{equation} 
paths of length $t-1$ in~$G$ from $x$ to~$Y$, since $\eps(t-1) = \eps(t)^t$ and $2Ct \cdot \eps(t) \le 1$. If $t \ge 3$, then by removing the final edge from each of these paths, we obtain a collection of paths of length $t-1 \ge 2$ from $x$ to $Y$ that, by~\eqref{eq:Ybound} and~\eqref{eq:numberofpathsthroughY}, satisfies the conditions of Lemma~\ref{lem:paths_to_concentrated_nbd}. This implies $t(x) \le t - 1$, which contradicts our assumption that $t(x) = t$. On the other hand, if $t = 2$ then~\eqref{eq:Ybound} states that $|Y| \le \eps(t)  k n^{1/\ell}$, and hence, by our bound on the branching number, there are at most $C \big( \eps(t) k n^{1/\ell} \big)^2$ paths in $\P$ whose penultimate vertex is in $Y$. Thus, in either case, at most $2 C^{t-1} \big( \eps(t)kn^{1/\ell} \big)^t$ paths are destroyed in Step~2, as claimed.

Putting the pieces together, and recalling that $C \ge 10\ell$, it follows that at most
$$\Big( (t + 2) C^{t-1} + 1 \Big)  \big( \eps(t) k n^{1/\ell} \big)^t \, \le \, \frac{C^t}{4} \big( \eps(t) k n^{1/\ell} \big)^t$$
paths were removed in Steps~1,~2 and~3 combined, and hence~\eqref{eq:ref:numberofpathsinP:repeat} holds. By the comments above, it follows that $(\B,\Q)$ is a refined $t$-neighbourhood of~$x$, as required. 
\end{proof}

\subsection{Finding cycles in refined \texorpdfstring{$t$}{t}-neighbourhoods}\label{sec:finding:cycles}

We are now ready to complete the proof of the key proposition.

\begin{proof}[Proof of Proposition~\ref{prop:finding:cycles}]
Fix an arbitrary $x \in V(G)$ with $t(x) = t(G) = t$. By Lemma~\ref{prop:RNF:exists}, there exists a balanced $t$-neighbourhood $(\A,\P)$ of~$x \in V(G)$, avoiding $\F=\F(\HH)$. Applying Lemma~\ref{lem:balanced_to_refined}, we obtain a refined $t$-neighbourhood $(\B, \Q)$ with $\B \prec \A$ and $\Q \subset \P$. 

To find a copy $H$ of $C_{2\ell}$ in~$G$ that is not already in~$\HH$ and with $\HH \cup \{H\}$ good, we will find a large collection of $2\ell$-cycles, each cycle containing an edge between~$x$ and~$B_1$ and avoiding all saturated sets of edges. Note that, since $|B_1| \le kn^{1/\ell}$ by property~$(i)$ of Definition~\ref{def:balanced:nbhd}, it is sufficient to find such a collection of cycles~$\C$ with
\begin{equation}
 \label{eq:size_of_C}
 |\C| \, \ge \, \eps(1)^{3\ell-1} k^{2\ell} n.
\end{equation}
Indeed, by the pigeonhole principle and our bound~\eqref{eq:deg1bound} on the degree of a single edge in $\HH$, it follows from~\eqref{eq:size_of_C} that at least one of these cycles will not already be in~$\HH$. Since (we will ensure that) no cycle in $\C$ contains a saturated set of edges, we will be able to add this cycle to~$\HH$ and obtain a hypergraph that is still good, as required.

As noted earlier, each cycle in $\C$ will be formed by paths~$P$ and~$Q$, constructed as follows: $P$, of length $2\ell - t$, goes from~$x$ to~$B_t$, and then alternates between $B_t$ and $B_{t-1}$, finishing at some vertex $v_{2\ell-t} \in B_t$, and $Q \in \Q[x \to v_{2\ell-t}]$ is chosen so that $P \cup Q$ does not contain any forbidden set. For technical reasons (see Claim~\ref{claim:m_of_j}, below), it is important that there are not too many choices for the path $P$. Therefore, let us first choose sets
$$X_i(u) \subset N(u) \cap B_{i+1} \quad \text{with} \quad |X_i(u)| = \eps(t) k n^{1/\ell}$$ 
for each $i \in \{0,\ldots,t-1\}$ and each $u \in B_i$, and 
$$X_t(u) \subset N(u) \cap B_{t-1}\quad \text{with} \quad |X_t(u)| = \eps(t)^2 k^{\ell/(\ell-1)}$$ 
for each $u \in B_t$. (These exist by properties~$(i)$ and~$(ii)$ of Definition~\ref{def:refined:nbhd}.) We will choose the vertices of the path $P$ from the sets $X_i(u)$.  

To be precise, we perform the following algorithm:

\begin{Alg}
Let~$\C$ be the set of cycles obtained via the following process.
\begin{enumerate}
\item[0.] Set $\C = \emptyset$. Now repeat the following two steps until STOP.\smallskip

 \item[1.] If possible, generate a path $(v_0, v_1, \ldots, v_{2\ell-t})$ not previously generated in this step as follows. Let $v_0 = x$, and define
 \[
  s(i) = \begin{cases}
           i &\mbox{if } i \in \{0, 1, \ldots, t\}, \\
           t-1 &\mbox{if } i \in \{t+1, t+3, \ldots, 2\ell-t-1\}, \\
           t &\mbox{if } i \in \{t+2, t+4, \ldots, 2\ell-t\}.
          \end{cases}
 \]
Now, for $i = 0,\ldots, 2\ell-t-1$, select $v_{i+1}$ from $X_{s(i)}(v_{i}) \subset N(v_{i}) \cap B_{s(i+1)}$ such that
 \begin{equation}
 \label{eq:path_extn_condn}
  v_{i+1} \not \in \big\{ v_0, \ldots, v_i \big\}  \qquad \mbox{and} \qquad  v_iv_{i+1} \not\in L^{(1)}_\F\big( E(P_i) \big),
 \end{equation}
where $P_i$ is the path $(v_0, \ldots, v_i)$. Set $P = P_{2\ell-t}$.  \\
Otherwise (that is, if no such path~$P$ exists that has not already been generated), then STOP.\smallskip

 \item[2.] Let $\Q(P) \subset \Q[x \to v_{2\ell-t}]$ be an arbitrary set of exactly $\eps(t)^t k^{(t-1)\ell/(\ell-1)}$ paths ending at $v_{2\ell-t}$, and let $\Q'(P) \subset \Q(P)$ denote the collection of paths $Q \in \Q(P)$ which use no vertex of $\{ v_1, \ldots, v_{2\ell-t-1} \}$ and avoid $L_\F ( E(P) )$.\\
\noindent For each path $Q \in \Q'(P)$, join the paths~$P$ and~$Q$ to form a cycle and add this to~$\C$.
\end{enumerate}
\end{Alg}

Since $v_iv_{i+1} \not\in L^{(1)}_\F\big( E(P_i) \big)$ for each $0 \le i \le 2\ell-t-1$, since the paths in $\Q(P)$ avoid $L_\F ( E(P) )$, and since $\Q \subset \P$ avoids $\F$, it follows that no member of $\C$ contains any saturated set of edges. In order to complete the proof of of Proposition~\ref{prop:finding:cycles}, it will therefore suffice to show that~\eqref{eq:size_of_C} holds.

\newcounter{ClaimsInCycleFinding}

\medskip
\refstepcounter{ClaimsInCycleFinding}
\noindent \textbf{Claim \arabic{ClaimsInCycleFinding}:}\label{claim:many_choices_for_P} There are at least 
$$\frac{1}{2} \cdot \big( \eps(t)k n^{1/\ell} \big)^\ell \cdot \big( \eps(t)^2 k^{\ell/(\ell-1)} \big)^{\ell-t}$$ 
choices for the path~$P$ in Step~1.

\begin{proof}[Proof of Claim~\ref{claim:many_choices_for_P}]
We will show that at most $2\ell + 2^{5\ell} \delta k^{\ell/(\ell-1)}$ choices are excluded for each vertex. To see this, note first that at most $2\ell$ choices are excluded by the condition that $v_{i+1} \not \in \big\{ v_0, \ldots, v_i \big\}$. Moreover, by Lemma~\ref{lem:size_of_link} we have
\[
 \big| L_\F^{(1)}\big( E(P_i) \big) \big| \, \le \, 2^{2\ell + e(P_i) + 1} \cdot \delta k^{\ell/(\ell-1)} \, \le \, 2^{5\ell} \delta k^{\ell/(\ell-1)},
\]
as required. Hence, if $i \in \{1,\ldots,t-1\} \cup \{t+1, t+3, \ldots, 2\ell-t-1\}$, then there are at least
\[
 \eps(t) kn^{1/\ell} - \Big( 2\ell + 2^{5\ell} \delta k^{\ell/(\ell-1)} \Big) \, \ge \, \frac{1}{2^{1/2\ell}} \cdot \eps(t) kn^{1/\ell}
\]
choices for $v_{i+1}$, where the last inequality follows since $k \le n^{(\ell-1)/\ell}$ and $\delta \ll \eps(t)^2$. Similarly, if $i \in \{t, t+2, \ldots, 2\ell-t-2\}$, then there are at least
\[
 \eps(t)^2 k^{\ell/(\ell-1)} - \Big( 2\ell + 2^{5\ell} \delta k^{\ell/(\ell-1)} \Big) \, \ge \, \frac{1}{2^{1/2\ell}} \cdot \eps(t)^2 k^{\ell/(\ell-1)}
\]
choices for $v_{i+1}$, again using the fact that~$\delta \ll \eps(t)^2$. It follows immediately that the total number of choices for~$P$ is at least
$$\frac{1}{2} \cdot \big( \eps(t)k n^{1/\ell} \big)^\ell \cdot \big( \eps(t)^2 k^{\ell/(\ell-1)} \big)^{\ell-t}$$ 
as claimed.
\end{proof}

Let $\D$ denote the collection of paths $P$ generated in Step~1 for which there are at least 
$$\frac{1}{4} \cdot \eps(t)^t k^{(t-1)\ell/(\ell-1)}$$ 
paths $Q \in \Q(P)$ with $E(Q) \in L_\F(E(P))$. We will show later that $|\D|$ is not too large. 

The next claim shows that if the path chosen in Step~1 satisfies $P \not\in \D$, then we have many choices for the path $Q$ in Step~2. 

\medskip
\refstepcounter{ClaimsInCycleFinding}
\label{claim:Q_prime_is_large}
\noindent \textbf{Claim \arabic{ClaimsInCycleFinding}:} Let $P$ be a path chosen in Step~1. If $P \not\in \D$, then 
$$|\Q'(P)| \, \ge \, \frac{1}{2} \cdot \eps(t)^t k^{(t-1)\ell/(\ell-1)}.$$ 

\begin{proof}[Proof of Claim~\ref{claim:Q_prime_is_large}]
Since $|\Q(P)| = \eps(t)^t k^{(t-1)\ell/(\ell-1)}$, we are required to bound $|\Q(P) \setminus \Q'(P)|$ from above, that is, to bound the number of paths in $\Q(P)$ that either contain a vertex of $P$, or fail to avoid $L_\F(E(P))$.

To do so, note first that, by Lemma~\ref{lem:counting_paths_through_vertices}, the number of paths in $\Q(P)$ which contain some vertex of $P$ is at most
$$2\ell^2 \cdot k^{(t-2)\ell/(\ell-1)} \, \le \, \eps(t)^{t+1} k^{(t-1)\ell/(\ell-1)},$$
 where the last inequality follows since $k \ge k_0 = \eps(1)^{-3\ell}$. So let $\sigma \in L_\F(E(P))$, and consider the paths in $\Q(P)$ which contain $\sigma$. Suppose first that $1 \le | \sigma | \le t - 1$. Then, by Lemma~\ref{lem:counting_paths_through_sets}, the number of paths in~$\Q(P)$ containing~$\sigma$ is at most
 \[
t^t k^{(t-|\sigma|-1)\ell/(\ell-1)}, 
\]
and recall that, by Lemma~\ref{lem:size_of_link}, we have
 \[
  \big| L_\F^{(|\sigma|)}( E(P) ) \big| \, \le \, 2^{5\ell} \big( \delta k^{\ell/(\ell-1)} \big)^{|\sigma|}.
 \]
Therefore, multiplying the above expressions, it follows that the number of paths in $\Q(P)$ which contain some $\sigma \in L_\F(E(P))$ with $1 \le |\sigma| \le t - 1$ is at most 
$$(2\ell)^{5\ell} \cdot \delta k^{(t-1)\ell/(\ell-1)} \, \le \, \eps(t)^{t+1} k^{(t-1)\ell/(\ell-1)},$$
where the last inequality holds since $\delta = \eps(1)^{3\ell}$. On the other hand, since $P \not\in \D$, there are at most 
$$\frac{1}{4} \cdot \eps(t)^t k^{(t-1)\ell/(\ell-1)}$$ 
paths in $\Q(P)$ which contain (and are therefore equal to) some $\sigma \in L_\F(E(P))$ with $|\sigma| = t$. Summing these three bounds, it follows that 
$$|\Q(P) \setminus \Q'(P)| \, \le \, \frac{1}{2} \cdot \eps(t)^t k^{(t-1)\ell/(\ell-1)},$$
as claimed. 
\end{proof}

Finally, we need to show that $\D$ is not too large. In order to do so, we will need two easy but technical claims. The first bounds the number of paths in $\D$ ending at a given vertex and containing a given set of edges.

\medskip
\refstepcounter{ClaimsInCycleFinding}
\label{claim:m_of_j}
\noindent \textbf{Claim \arabic{ClaimsInCycleFinding}:} Given a set of edges $J \subset E(G)$ of size $|J| = j$, and a vertex $v \in B_t$, there are at most
\begin{equation*}
 m(j) \,:=\, \begin{cases}
  (2\ell)^{2\ell} \cdot \big( \eps(t) kn^{1/\ell} \big)^{\ell-1} \cdot \big( \eps(t)^2 k^{\ell/(\ell-1)} \big)^{\ell-t-j} &\mbox{if } 1 \le j \le \ell-t, \\
  (2\ell)^{2\ell} \cdot \big( \eps(t) k n^{1/\ell} \big)^{2\ell-t-j-1} &\mbox{if } \ell - t < j < 2\ell - t
 \end{cases}
\end{equation*}
paths $P \in \D$ ending at $v$ with $J \subset E(P)$.

\begin{proof}[Proof of Claim~\ref{claim:m_of_j}]
Note that we have at most $(2\ell)^{2\ell}$ choices for the positions of the edges of $J$ in the path $P$. So let's fix such a choice, and count the corresponding paths.

To do so, it suffices to observe that at least $j+2$ of the vertices of $P$ are fixed (these are the endpoints of edges in $J$, and the endpoints $x$ and $v$ of $P$), and that $kn^{1/\ell} \ge k^{\ell/(\ell-1)}$. The bound in the case $j > \ell - t$ now follows immediately, since we have at most $\eps(t) k n^{1/\ell}$ choices for each not-yet-chosen vertex $v_i$. Moreover, the bound in the case $j \le \ell - t$ also follows easily, we have $\eps(t) k n^{1/\ell}$ choices for (at most) $\ell - 1$ of the not-yet-chosen vertices, and at most $\eps(t)^2 k^{\ell/(\ell-1)}$ choices for each of the remaining vertices. 
\end{proof}

The key property of the bound $m(j)$ is that it satisfies
\begin{equation}\label{eq:mj:bound}
m(j) \cdot \big( \delta k^{\ell / (\ell - 1)} \big)^j \, \le \, \big( \eps(t) kn^{1/\ell} \big)^{\ell-1} \cdot \big( \eps(t)^2 k^{\ell/(\ell-1)} \big)^{\ell-t}
\end{equation}
for every $1 \le j < 2\ell - t$. We shall use the inequality~\eqref{eq:mj:bound} in the proof of Claim~\ref{claim:D_is_small}, below. Our second technical claim gives a bound on the number of pairs of edge sets $(J,Q)$, where $Q \in \Q(P)$ for some $P \in \D$, and $E(Q) \cup J \in \F$.

\medskip
\refstepcounter{ClaimsInCycleFinding}
\label{claim:Jexists}
\noindent \textbf{Claim \arabic{ClaimsInCycleFinding}:} For some $1 \le j < 2\ell - t$, there exist at least
\begin{equation}\label{eq:countingRJ:lower}
2^{-3\ell} \eps(t)^t k^{(t-1)\ell/(\ell-1)} \cdot \frac{|\D|}{2\ell \cdot m(j)}
\end{equation}
distinct pairs $(J,Q)$ with the following properties:
\begin{itemize}
\item[$(a)$] $Q \in \Q(P)$ for some path $P \in \D$, 
\item[$(b)$] $J$ is a set of $j$ edges of $G$ disjoint from $E(Q)$,
\item[$(c)$] $E(Q) \cup J \in \F$.
\end{itemize}

\begin{proof}[Proof of Claim~\ref{claim:Jexists}]
Recall that for each path $P \in \D$, there are at least $\frac{1}{4} \cdot \eps(t)^t k^{(t-1)\ell/(\ell-1)}$ paths $Q \in \Q(P)$ with $E(Q) \in L_\F(E(P))$. By the pigeonhole principle, it follows that for each such path $P$, there exists a set $\emptyset \ne f(P) \subset E(P)$ such that there are at least
\begin{equation}\label{eq:countingRs}
 2^{-3\ell} \eps(t)^t k^{(t-1)\ell/(\ell-1)}
\end{equation}
paths $Q \in \Q(P)$, each of which is disjoint from $f(P)$ and such that  $E(Q) \cup f(P) \in \F$. By another application of the pigeonhole principle, there exists $1 \le j < 2\ell - t$ for which there are at least $|\D|/2\ell$ paths $P \in \D$ with $|f(P)| = j$. 

We claim that there are at least
\begin{equation}\label{eq:Dover2ellmj}
\frac{|\D|}{2\ell \cdot m(j)}
\end{equation}
distinct pairs $(J,v)$ such that $v \in B_t$, $|J| = j$ and $f(P) = J$ for some path $P \in \D$ which ends at $v$. Indeed, since $f(P) \subset E(P)$ for each $P \in \D$, it follows from Claim~\ref{claim:m_of_j} that for each pair $(J,v)$ with $v \in B_t$ and $|J| = j$, there are at most $m(j)$ paths $P \in \D$ ending at~$v$ with $f(P) = J$. Thus, by the observation above, there must be at least $|\D| / (2\ell \cdot m(j))$ such pairs with at least one such path, as claimed.

Finally, for each such pair $(J,v)$ choose a path $P \in \D$ which ends at $v$ with $f(P) = J$, and recall that there are~\eqref{eq:countingRs} paths $Q \in \Q(P)$ with $E(Q) \cup J \in \F$, each of which is disjoint from $J$. Since each $Q \in \Q(P)$ has the same endpoint as $P$, it follows that the pairs $(J,Q)$ thus obtained are all different, and hence the claim follows. 
\end{proof}

We are finally ready to bound $|\D|$. We shall do so by showing that if $\D$ were too large, then some edge of~$G$ (more precisely, some edge between $x$ and $B_1$) would be contained in more than $\Delta^{(1)}(k, n)$ members of $\HH$, contradicting our assumption that~$\HH$ is good. 

\medskip
\refstepcounter{ClaimsInCycleFinding}
\label{claim:D_is_small}
\noindent \textbf{Claim \arabic{ClaimsInCycleFinding}:} $|\D| \le \ds\frac{1}{4} \cdot \big( \eps(t)k n^{1/\ell} \big)^\ell \cdot \big( \eps(t)^2 k^{\ell/(\ell-1)} \big)^{\ell-t}$.

\begin{proof}[Proof of Claim~\ref{claim:D_is_small}]
In order to deduce that some edge between $x$ and $B_1$ is already contained in too many cycles, recall first that if $|E(Q) \cup J| = t + j$ and $E(Q) \cup J \in \F$ then
\[
 d_\HH\big( E(Q) \cup J \big) \, \ge \, \lfloor \Delta^{(t+j)}(k, n) \rfloor \, \ge \, \frac{1}{2} \cdot \frac{\Delta^{(1)}(k, n)}{\big( \delta k^{\ell / (\ell-1)} \big)^{t+j-1}}.
\]
On the other hand, every path~$Q$ with $Q \in \Q(P)$ for some $P \in \D$ contains an edge of~$G$ between~$x$ and $B_1$. Thus, noting that each member of~$\HH$ contains $E(Q) \cup J$ for at most $2^{4\ell}$ pairs $(J,Q)$, it follows from Claim~\ref{claim:Jexists} that $\HH$ contains at least
\[
2^{-3\ell} \eps(t)^t k^{(t-1)\ell/(\ell-1)} \cdot \frac{|\D|}{2\ell \cdot m(j)} \cdot \frac{\Delta^{(1)}(k, n)}{2^{4\ell+1} \big( \delta k^{\ell / (\ell-1)} \big)^{t+j-1}}
\]
cycles, each containing some edge between $x$ and $B_1$. Since $t \ge 2$ and $\delta = \eps(1)^{3\ell}$, this is at least
\[
\frac{4}{\eps(t)} \cdot \frac{|\D| \cdot \Delta^{(1)}(k, n)}{m(j) \cdot \big( \delta k^{\ell / (\ell-1)} \big)^j} \, \ge \, \frac{4}{\eps(t)} \cdot \frac{|\D| \cdot \Delta^{(1)}(k, n)}{\big( \eps(t) kn^{1/\ell} \big)^{\ell-1} \cdot \big( \eps(t)^2 k^{\ell/(\ell-1)} \big)^{\ell-t}},
\]
where the second inequality follows from~\eqref{eq:mj:bound}. Now, since $|B_1| \le kn^{1/\ell}$, by an application of the pigeonhole principle there is an edge $e = xu \in E(G)$ with $u \in B_1$ such that  
\[
d_\HH(e) \, \ge \, 4 \cdot \frac{|\D| \cdot \Delta^{(1)}(k, n)}{\big( \eps(t) kn^{1/\ell} \big)^{\ell} \cdot \big( \eps(t)^2 k^{\ell/(\ell-1)} \big)^{\ell-t}} \, > \, \Delta^{(1)}(k, n)
\]
if $|\D| > \frac{1}{4} \cdot \big( \eps(t)k n^{1/\ell} \big)^\ell \cdot \big( \eps(t)^2 k^{\ell/(\ell-1)} \big)^{\ell-t}$, contradicting our assumption that $\HH$ is good.
\end{proof}

It is now easy to deduce~\eqref{eq:size_of_C}. Indeed, multiplying the number of choices in Step~1 for a path~$P \not\in \D$, by the number of choices in Step~2 for the return path $Q$, assuming $P \not\in \D$, it follows from Claims~\ref{claim:many_choices_for_P},~\ref{claim:Q_prime_is_large} and~\ref{claim:D_is_small} that 
\[
|\C| \, \ge \, \frac{1}{2} \cdot \bigg( \frac{1}{4} \cdot \big( \eps(t)k n^{1/\ell} \big)^\ell \cdot \big( \eps(t)^2 k^{\ell/(\ell-1)} \big)^{\ell-t} \bigg) \bigg( \frac{1}{2} \cdot \eps(t)^t k^{(t-1)\ell/(\ell-1)} \bigg)
  \, \ge \, \eps(t)^{3\ell-1} k^{2\ell} n,
\]
where the initial factor of $1/2$ is because we may have counted each cycle twice. In particular, this implies~\eqref{eq:size_of_C}, and thus completes the proof of Proposition~\ref{prop:finding:cycles}.
\end{proof}

\subsection{Proof of Theorem~\ref{thm:cycle:hypergraph}}\label{ESconj:proofSec}

Finally, let us deduce Theorem~\ref{thm:cycle:hypergraph} from Proposition~\ref{prop:finding:cycles}. 

\begin{proof}[Proof of Theorem~\ref{thm:cycle:hypergraph}]
Starting with~$\HH = \emptyset$, we repeatedly apply Proposition~\ref{prop:finding:cycles} to find a copy $H \not\in \HH$ of $C_{2\ell}$ in $G$ such that $\HH \cup \{H\}$ is good, and add this to~$\HH$. We may continue in this fashion until $e(\HH) \ge \delta k^{2\ell} n^2$, giving a hypergraph that satisfies condition~$(a)$ of the theorem. Since $\HH$ is good with respect to $(\delta,k,\ell,n)$, and
$$\Delta^{(j)}(k,n) \, = \, \frac{k^{2\ell-1} n^{1 - 1  / \ell}}{ \big( \delta k^{\ell / (\ell-1)} \big)^{j-1}} \, \le \, \frac{1}{\delta^{2\ell}} \cdot k^{2\ell - j - \frac{j - 1}{\ell-1}} n^{1 - 1/\ell}$$
for every $j \in [2\ell-1]$ and $k \le n^{1 - 1/\ell}$, it follows that~$\HH$ also satisfies condition~$(b)$ of the theorem, as required.
\end{proof}

\section{Hypergraph containers}\label{sec:containers}

In this section we recall the basic definitions relating to hypergraph containers, and state the theorem from~\cite{BMS,ST} that we will use in order to deduce Theorems~\ref{thm:main} and~\ref{thm:cycle:containers} from Theorem~\ref{thm:cycle:hypergraph}. In order to do so, we will need to define a parameter $\delta(\HH,\tau)$ which (roughly speaking) measures the `uniformity' of the hypergraph $\HH$. 

\begin{defn}\label{def:tau}
Given an $r$-uniform hypergraph $\HH$, define the \emph{co-degree function} of $\HH$ 
$$\delta(\HH,\tau) \, = \, \frac{1}{e(\HH)} \,\sum_{j=2}^r \,\frac{1}{\tau^{j-1}} \sum_{v \in V(\HH)} d^{(j)}(v),$$
where
$$d^{(j)}(v) \, = \, \max\big\{ d_\HH(\sigma) \, : \, v \in \sigma \subset V(\HH) \textup{ and }|\sigma| = j \big\}$$
denotes the maximum degree in $\HH$ of a $j$-set containing $v$. 
\end{defn}

We remark that we have removed some extraneous constants from the definition in~\cite{ST}, since these do not affect the formulation of the theorem below. 

The following theorem is an easy consequence of~\cite[Theorem~6.2]{ST}, see also~\cite[Proposition~3.1]{BMS}. We remark that we may take $\delta_0(r)$ to be roughly $r^{-2r}$. 

\begin{thm}\label{thm:coveroff}
Let $r \ge 2$ and let $0 < \delta < \delta_0(r)$ be sufficiently small. Let $\HH$ be an $r$-graph with~$N$ vertices, and suppose that $\delta(\HH,\tau) \le \delta$ for some $0 < \tau < 1/2$. Then there exists a collection $\C$ of at most
  $$\exp\bigg( \frac{\tau \log( 1/\tau ) N}{\delta} \bigg)$$
 subsets of $V(\HH)$ such that
 \begin{itemize}
 \item[$(a)$] for every independent set $I$ there exists a set $C \in \C$ with $I \subset C$,\smallskip
 \item[$(b)$] $e\big( \HH[C] \big) \le \big(1 - \delta \big) e(\HH)$ for every $C \in \C$.
 \end{itemize}
\end{thm}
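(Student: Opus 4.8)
The plan is to deduce Theorem~\ref{thm:coveroff} directly from the container theorem of Saxton and Thomason~\cite[Theorem~5.2]{ST} (equivalently, from~\cite[Proposition~3.1]{BMS}); essentially no new idea is required, and the work is entirely one of reconciling normalizations and packaging the statement in the iterated form stated here. First I would recall the single-step container lemma underlying~\cite{ST}: given an $r$-uniform hypergraph $\HH$ on $N$ vertices whose co-degree function $\delta(\HH,\tau)$ is suitably small, there is a map assigning to each independent set $I$ a ``fingerprint'' $S(I) \subseteq I$ of size at most (a constant times) $\tau N$, together with a container $C(S) \supseteq I$ depending only on $S(I)$, such that either $e\big(\HH[C]\big) \le (1-\delta) e(\HH)$ or the induced hypergraph is still dense but $C$ can be refined further. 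Since the fingerprints are subsets of $V(\HH)$ of bounded size, the number of containers produced in one step is at most $\sum_{i \le c_r \tau N} \binom{N}{i} \le \exp\big( O_r\big(\tau \log(1/\tau) N \big) \big)$.

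Next I would iterate the lemma: begin with $\HH$ and the containers it produces, and whenever a container $C$ still has $e\big(\HH[C]\big) > (1-\delta) e(\HH)$, apply the lemma again to $\HH[C]$, replacing $C$ by the (smaller) containers it yields. Each round in which a container survives destroys a $\delta$-fraction of the edges, so after $O_\delta(1)$ rounds every container $C$ in the final collection satisfies part~$(b)$. Part~$(a)$ is maintained automatically, since the container assigned to an independent set $I$ is only ever refined to a smaller set still containing $I$. Multiplying the per-round bounds over the $O_\delta(1)$ rounds keeps the total number of containers at $\exp\big( O_{r,\delta}\big( \tau \log(1/\tau) N \big) \big)$, and absorbing the constant depending on $r$ and $\delta$ into the factor $1/\delta$ in the exponent — which is legitimate because we assume $\delta < \delta_0(r)$ with $\delta_0(r) \approx r^{-2r}$ and $\tau < \delta$ — gives the claimed bound $\exp\big( \tau \log(1/\tau) N / \delta \big)$.

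The only mild obstacle is bookkeeping. The definition of $\delta(\HH,\tau)$ used in this paper has had the constants of~\cite{ST} stripped out, so one must verify that the hypothesis $\delta(\HH,\tau) \le \delta$ here implies the corresponding hypothesis of~\cite[Theorem~5.2]{ST} after replacing $\tau$ by a bounded multiple of itself; this is harmless since it changes $\tau \log(1/\tau)$ only by a constant factor, again absorbed into $1/\delta$. One should also check that the max-degree formulation of $d^{(j)}(v)$ matches the co-degree quantities appearing in~\cite{BMS,ST}. Beyond these routine verifications, the theorem is an immediate transcription of the existing container results.
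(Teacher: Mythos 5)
Your proposal is correct and matches the paper exactly: the paper offers no proof of Theorem~\ref{thm:coveroff} beyond the remark that it is ``an easy consequence of~\cite[Theorem~5.2]{ST}, see also~\cite[Proposition~3.1]{BMS}'', which is precisely the deduction (citation plus normalization and counting bookkeeping) you carry out. The only cosmetic difference is that your iteration-until-sparse loop is not needed, since the cited single-step results already guarantee $e\big(\HH[C]\big) \le (1-\delta)e(\HH)$ for every container in one application; this does not affect correctness.
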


We will refer to the collection $\C$ as the \emph{containers} of $\HH$, since, by~$(a)$, every independent set is contained in some member of $\C$. The reader should think of $V(\HH)$ as being the \emph{edge} set of some underlying graph $G$, and $E(\HH)$ as encoding (some subset of) the copies of a forbidden graph $H$ in $G$. Thus every $H$-free subgraph of $G$ is an independent set of $\HH$. 

\subsection{How to apply the container theorem}

In order to motivate the (slightly technical) details of the proof below, let us first give a brief outline of how Theorem~\ref{thm:coveroff} can be used, in conjunction with a balanced supersaturation theorem like Theorem~\ref{thm:cycle:hypergraph}, to count $H$-free graphs on $n$ vertices. Starting with a single container (the complete graph), we repeat the following two steps until all containers (each of which is just a graph on $n$ vertices) have sufficiently few edges. First, we choose a container $C$ in the current collection, and use the balanced supersaturation theorem to bound the co-degree function $\delta(\HH,\tau)$ inside $C$. Theorem~\ref{thm:coveroff} then allows us to replace $C$ by a (not too large) family of (slightly smaller) containers. Using conditions~$(a)$ and~$(b)$ of the theorem above, we can bound the total number of containers produced in this process.

To be a little more precise, let $H = C_{2\ell}$ and assume that the container we are currently considering corresponds to a graph $G$ with $k n^{1 + 1/\ell}$ edges. Theorem~\ref{thm:cycle:hypergraph} provides us with a family of cycles $\HH$ that can be thought of as a $2\ell$-uniform hypergraph on vertex set $E(G)$. We will show (see Proposition~\ref{prop:containers_for_graphs}, below), using our bounds on the size and degrees of $\HH$, that if $\tau = k^{- (1 + 2\eps)}$, then $\delta(\HH,\tau) = o(1)$ as $k \to \infty$. Applying Theorem~\ref{thm:coveroff} to $\HH$, we obtain a collection of at most $\exp\big( k^{-\eps} n^{1 + 1/\ell} \big)$ containers for the $C_{2\ell}$-free subgraphs of $G$, each containing slightly fewer members of $\HH$ than $G$, and hence (using the uniformity of $\HH$), with slightly fewer edges than $G$. It follows that, after about $\log n$ steps, our containers will have only $O(n^{1 + 1/\ell})$ edges and the process will stop. Crucially, because the number of containers produced in a given step is a (rapidly) decreasing function of $k$, the last $O(1)$ steps dominate the count, see the proofs of Theorems~\ref{thm:cycle:containers:turan} and~\ref{thm:cycle:containers:turan:refined}, below.

\section{Counting \texorpdfstring{$H$}{H}-free graphs}\label{sec:proof}

In this section we will prove Theorem~\ref{thm:cycle:containers}, deduce Theorem~\ref{thm:main} and Theorem~\ref{cor:fewwithfewedges}, and prove Proposition~\ref{prop:conj:implies:thm}. In fact, we will prove the following generalization of Theorem~\ref{thm:cycle:containers}, which will also prove useful in studying the Tur\'an problem on the random graph.

\begin{thm}\label{thm:cycle:containers:turan}
For each $\ell \ge 2$, there exists a constant $C = C(\ell)$ such that the following holds for all sufficiently large $n,k \in \N$ with $k \le n^{(\ell-1)^2 / \ell(2\ell-1)} / (\log n)^{\ell-1}$. There exists a collection~$\G_\ell(n,k)$ of at most
$$\exp\Big( C k^{-1/(\ell-1)} n^{1 + 1/\ell} \log k \Big)$$
graphs on vertex set~$[n]$ such that
$$e(G) \le k n^{1+1/\ell}$$ 
for every $G \in \G_\ell(n,k)$, and every $C_{2\ell}$-free graph is a subgraph of some~$G \in \G_\ell(n,k)$. 
\end{thm}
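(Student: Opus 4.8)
The plan is to iterate the hypergraph container theorem (Theorem~\ref{thm:coveroff}), feeding it at each stage the highly uniform hypergraph supplied by Theorem~\ref{thm:cycle:hypergraph}. We build a rooted tree of graphs on $[n]$: the root is $K_n$, and the children of a graph $G$ with more than $kn^{1+1/\ell}$ edges are the containers obtained by applying Theorem~\ref{thm:coveroff} to a $2\ell$-uniform hypergraph $\HH = \HH(G)$ on vertex set $E(G)$ whose edges are copies of $C_{2\ell}$. Since every $C_{2\ell}$-free subgraph of $G$ is an independent set of $\HH$, part~$(a)$ of Theorem~\ref{thm:coveroff} ensures that the leaves of the tree — the graphs with at most $kn^{1+1/\ell}$ edges, at which point we stop branching (this is legitimate as $k$, hence every internal edge count, is at least $k_0$, so Theorem~\ref{thm:cycle:hypergraph} applies) — together cover all $C_{2\ell}$-free graphs. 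It remains to verify the hypotheses of Theorem~\ref{thm:coveroff} at each step and to count the leaves.

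Fix an internal node $G$ with $e(G) = k' n^{1+1/\ell}$, where $k_0 \le k' \le \binom{n}{2}/n^{1+1/\ell}$ (passing to a spanning subgraph if $k'$ is not an integer), and let $\HH = \HH(G)$ be the hypergraph given by Theorem~\ref{thm:cycle:hypergraph} with parameter $k'$; write $\delta$ for the constant of that theorem. Two estimates are needed. The first is a \emph{codegree bound}: setting $\tau = \tau(k') \asymp \max\{\, k'^{-\ell/(\ell-1)},\; k'^{-1} n^{-(\ell-1)/\ell(2\ell-1)} \,\}$ with a large enough implied constant, I claim $\delta(\HH,\tau) < \delta_0(2\ell)$. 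To see this one substitutes the bounds of Theorem~\ref{thm:cycle:hypergraph} into Definition~\ref{def:tau}: for $2 \le j \le 2\ell-1$, part~$(b)$ gives $d^{(j)}(v) \le C k'^{\,2\ell-j-(j-1)/(\ell-1)} n^{1-1/\ell}$ and part~$(a)$ gives $e(\HH) \ge \delta k'^{2\ell} n^2$, so the $j$-th term of $\delta(\HH,\tau)$ is $O\big( (\tau\, k'^{\ell/(\ell-1)})^{-(j-1)} \big)$, which is controlled precisely because the bound in part~$(b)$ improves by more than a factor $k'^{\ell/(\ell-1)}$ each time $|\sigma|$ grows; the remaining $j = 2\ell$ term equals $O\big( (\tau k')^{-(2\ell-1)} n^{-(1-1/\ell)} \big)$ since $d^{(2\ell)}(v) \le 1$, and it is this term that forces the second branch of $\tau$ once $k' \gtrsim n^{(\ell-1)^2/\ell(2\ell-1)}$. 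The second estimate is an \emph{edge-reduction} step: each edge of $G$ lies in at most $C k'^{2\ell-1} n^{1-1/\ell}$ edges of $\HH$ (part~$(b)$ with $|\sigma|=1$) while $e(\HH) \ge \delta k'^{2\ell} n^2$, so any container $C$ output by Theorem~\ref{thm:coveroff}, which by part~$(b)$ of that theorem omits a constant fraction of the edges of $\HH$, must have $e(C) \le (1-c)\, e(G)$ for an absolute constant $c > 0$.

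Given these two facts the remainder is bookkeeping. Theorem~\ref{thm:coveroff} yields at most $\exp\!\big( O(\tau \log(1/\tau)\, e(G)) \big)$ children of $G$, and for our choice of $\tau$ one computes $\tau\, e(G) \asymp k'^{-1/(\ell-1)} n^{1+1/\ell}$ when $k' \le n^{(\ell-1)^2/\ell(2\ell-1)}$ (this is decreasing in $k'$) and $\tau\, e(G) \asymp n^{1+1/(2\ell-1)}$ for larger $k'$, while $\log(1/\tau) = O(\log k')$ in the first regime and $O(\log n)$ in the second. By the edge-reduction step the edge counts decrease by a constant factor along every root-to-leaf path, so the tree has depth $O(\log n)$, and the number of leaves is at most the product over levels of the maximal per-node branching — i.e.\ $\exp$ of a sum of the above quantities. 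Since $k'^{-1/(\ell-1)}\log k'$ is summable over a geometrically decreasing sequence of values of $k'$ terminating at $k$, this sum telescopes to $O\big( k^{-1/(\ell-1)} n^{1+1/\ell} \log k \,+\, n^{1+1/(2\ell-1)} (\log n)^2 \big)$, and the hypothesis $k \le n^{(\ell-1)^2/\ell(2\ell-1)}/(\log n)^{\ell-1}$ is exactly the condition under which the first term absorbs the second, giving the stated count.

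The main obstacle is the codegree estimate together with the choice of $\tau$: one must extract from Theorem~\ref{thm:cycle:hypergraph} a single value of $\tau$ that keeps $\delta(\HH,\tau)$ below the absolute threshold $\delta_0(2\ell)$ \emph{and} keeps $\tau\, e(G)$ as small as the target demands, and the constraint coming from the terms $2 \le j \le 2\ell-1$ pulls in the opposite direction to the one coming from $j = 2\ell$; reconciling them is what produces the exponent $(\ell-1)^2/\ell(2\ell-1)$ and the $(\log n)^{\ell-1}$ loss in the admissible range of $k$. Once these per-step estimates are secured, the tree-counting argument — including the routine checks that each container is again a graph on $[n]$ with enough edges for Theorem~\ref{thm:cycle:hypergraph} to apply while the process continues, and that $\tau < \delta_0(2\ell)$ throughout since $k$ is sufficiently large — is straightforward.
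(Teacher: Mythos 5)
Your proposal is correct and follows essentially the same route as the paper: the paper's Proposition~\ref{prop:containers_for_graphs} is exactly your per-step estimate (with the same $\tau \asymp \max\{k'^{-\ell/(\ell-1)}, k'^{-1}n^{-(\ell-1)/\ell(2\ell-1)}\}$, the same codegree computation splitting $2 \le j \le 2\ell-1$ from $j = 2\ell$, and the same degree-counting argument showing each container loses a constant fraction of the edges of $G$), and the iteration with the geometric sum dominated by the final value $k$ is the paper's proof of Theorem~\ref{thm:cycle:containers:turan}. The only difference is presentational (a rooted tree of containers versus successive families $\C_t$).
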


As we will see below, the bounds in the statement above are probably close to best possible, cf. Section~\ref{sec:lowerbounds}. Note that Theorem~\ref{thm:cycle:containers} follows immediately from Theorem~\ref{thm:cycle:containers:turan} by choosing $k$ to be a sufficiently large constant so that $k^{-1/(\ell-1)} \log_2 k < \delta / C$. 

We begin the proof of Theorem~\ref{thm:cycle:containers:turan} by using Theorems~\ref{thm:cycle:hypergraph} and~\ref{thm:coveroff}, as outlined in the previous section, to prove the following container result for $C_{2\ell}$-free graphs.

\begin{prop} \label{prop:containers_for_graphs}
For every $\ell \ge 2$, there exist $k_0 \in \N$ and $\eps > 0$ such that the following holds for every $k \ge k_0$ and every $n \in \N$. Given a graph~$G$ with~$n$ vertices and $k n^{1+1/\ell}$ edges, there exists a collection~$\C$ of at most
$$\exp\bigg( \frac{n^{1 + 1/\ell}}{\eps}  \cdot \max \Big\{ k^{-1/(\ell-1)} \log k, \, n^{-(\ell-1) / \ell(2\ell-1)} \log n \Big\}  \bigg)$$
subgraphs of~$G$ such that:
\begin{itemize}
 \item[$(a)$] Every $C_{2\ell}$-free subgraph of~$G$ is a subgraph of some $C \in \C$, and\smallskip
 \item[$(b)$] $e(C) \le (1 - \eps) e(G)$ for every $C \in \C$.
 \end{itemize}
\end{prop}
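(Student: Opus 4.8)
The proof of Proposition~\ref{prop:containers_for_graphs} is a direct application of the container theorem (Theorem~\ref{thm:coveroff}) to the hypergraph produced by Theorem~\ref{thm:cycle:hypergraph}. First I would apply Theorem~\ref{thm:cycle:hypergraph} to $G$ to obtain a $2\ell$-uniform hypergraph $\HH$ on vertex set $E(G)$, with $e(\HH) \ge \delta k^{2\ell} n^2$ and $d_\HH(\sigma) \le C' k^{2\ell - |\sigma| - (|\sigma|-1)/(\ell-1)} n^{1-1/\ell}$ for each $\sigma$ with $1 \le |\sigma| \le 2\ell - 1$, whose edges are copies of $C_{2\ell}$ in $G$. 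Every $C_{2\ell}$-free subgraph of $G$ is then an independent set of $\HH$, so the containers furnished by Theorem~\ref{thm:coveroff} will be the desired family $\C$. The main work is to choose $\tau$ and check the hypothesis $\delta(\HH,\tau) \le \delta$ with the right bound.

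**Estimating the co-degree function.** For the $j$-th term in $\delta(\HH,\tau)$ I would use $d^{(j)}(v) \le \Delta^{(j)} := C' k^{2\ell - j - (j-1)/(\ell-1)} n^{1-1/\ell}$, so that $\sum_{v} d^{(j)}(v) \le N \Delta^{(j)}$ where $N = e(G) = k n^{1+1/\ell}$. Dividing by $e(\HH) \ge \delta k^{2\ell} n^2$, the $j$-th term of $\delta(\HH,\tau)$ is at most a constant times
$$\frac{1}{\tau^{j-1}} \cdot \frac{k n^{1+1/\ell} \cdot k^{2\ell - j - (j-1)/(\ell-1)} n^{1-1/\ell}}{k^{2\ell} n^2} \, = \, \frac{1}{\tau^{j-1}} \cdot k^{1 - j - (j-1)/(\ell-1)} \, = \, \Big( \tau k^{1 + 1/(\ell-1)} \Big)^{-(j-1)}.$$
Hence, up to constants, $\delta(\HH,\tau) \lesssim \sum_{j=2}^{2\ell} \big(\tau k^{\ell/(\ell-1)}\big)^{-(j-1)}$, which is $o(1)$ as long as $\tau k^{\ell/(\ell-1)} \to \infty$. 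So the natural choice is $\tau = k^{-\ell/(\ell-1) + \eps'}$ for a small $\eps' > 0$ — equivalently $\tau = k^{-(1+\eps)}$ with $\eps$ slightly below $1/(\ell-1)$ — which makes $\delta(\HH,\tau)$ smaller than any prescribed $\delta = \delta(\ell)$ once $k \ge k_0(\ell)$. This is the step where Theorem~\ref{thm:cycle:hypergraph}$(b)$ is essential: the improvement by a factor $k^{\ell/(\ell-1)} > k$ each time $|\sigma|$ grows by one is exactly what makes the geometric series converge for $\tau$ a small negative power of $k$.

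**Reading off the conclusion.** With this $\tau$, Theorem~\ref{thm:coveroff} gives a collection $\C$ of at most $\exp\!\big( \tau \log(1/\tau) N / \delta \big)$ subsets of $V(\HH) = E(G)$; these are the subgraphs of $G$ in part~$(a)$, and independence of $C_{2\ell}$-free subgraphs gives~$(a)$. For the count, $\tau \log(1/\tau) N \lesssim k^{-\ell/(\ell-1)+\eps'} \cdot \log k \cdot k n^{1+1/\ell} = k^{-1/(\ell-1)+\eps'} n^{1+1/\ell} \log k$, which absorbs into the first branch of the stated maximum. One subtlety: for $k$ very large relative to $n$ the bound $\tau < \delta$ in Theorem~\ref{thm:coveroff} could fail, or $\Delta^{(j)}$ could drop below $1$ (recall $\Delta^{(j)}(k,n) \ge 1$ needs $k \le n^{(\ell-1)/\ell}$); this is precisely what forces the second term $n^{-(\ell-1)/\ell(2\ell-1)} \log n$ in the maximum — essentially one caps $\tau$ from below by a suitable power of $n$, and rechecks that $\delta(\HH,\tau)$ stays small and the container count stays in the claimed range. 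Part~$(b)$, $e(C) \le (1-\eps)e(G)$, is immediate from part~$(b)$ of Theorem~\ref{thm:coveroff} applied with $\delta$: each container spans at most $(1-\delta)e(\HH)$ edges of $\HH$, and one converts this into a bound on $e(C)$ using that $\HH$ encodes enough copies of $C_{2\ell}$ uniformly — concretely, if $e(C) > (1-\eps)e(G)$ then by goodness/uniformity $C$ would have to contain more than $(1-\delta)e(\HH)$ edges of $\HH$. The main obstacle is bookkeeping the two regimes of $k$ cleanly so that the single $\max\{\cdot,\cdot\}$ expression comes out, together with verifying that the value of $\eps$ can be chosen uniformly (dependent only on $\ell$); the core inequality, however, is the geometric-series estimate above, which is essentially forced by Theorem~\ref{thm:cycle:hypergraph}.
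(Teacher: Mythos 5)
Your plan is the paper's: apply Theorem~\ref{thm:cycle:hypergraph} to get $\HH$, verify $\delta(\HH,\tau)\le\delta$, invoke Theorem~\ref{thm:coveroff}, and convert the deficit $e(\HH[C])\le(1-\delta)e(\HH)$ into $e(C)\le(1-\eps)e(G)$ via the maximum degree of a single vertex of $\HH$ together with the lower bound on $e(\HH)$. The geometric-series computation and the argument for part~$(b)$ are both right in substance. However, there is a genuine quantitative error in your choice of $\tau$. You take $\tau=k^{-\ell/(\ell-1)+\eps'}$ so that $\tau k^{\ell/(\ell-1)}\to\infty$, and then assert that the resulting container count $\exp\big(k^{-1/(\ell-1)+\eps'}n^{1+1/\ell}\log k\big)$ ``absorbs into the first branch of the stated maximum.'' It does not: the proposition claims $\exp\big(\eps^{-1}k^{-1/(\ell-1)}n^{1+1/\ell}\log k\big)$ with $\eps=\eps(\ell)$ a \emph{constant}, and your bound exceeds this by a factor of $k^{\eps'}$ in the exponent, which is unbounded as $k\to\infty$. (The weaker bound would still yield Theorem~\ref{thm:main}, where $k$ is a constant, but not Theorem~\ref{thm:cycle:containers:turan} or the random Tur\'an results.) The fix is immediate from your own estimate: the series $\sum_{j}\big(\tau k^{\ell/(\ell-1)}\big)^{-(j-1)}$, times the constant you suppressed, is already at most $\delta$ as soon as $\tau k^{\ell/(\ell-1)}$ is a sufficiently large \emph{constant}; it need not tend to infinity. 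The paper accordingly sets $1/\tau=\delta^4k\cdot\min\big\{k^{1/(\ell-1)},\,n^{(\ell-1)/\ell(2\ell-1)}\big\}$, so that in the main regime $\tau$ is a constant multiple of $k^{-\ell/(\ell-1)}$, which gives exactly the claimed count.

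A secondary point: your sum runs up to $j=2\ell$ using the extrapolated degree formula, but Theorem~\ref{thm:cycle:hypergraph}$(b)$ only covers $1\le|\sigma|\le 2\ell-1$; for $j=2\ell$ one must use the trivial bound $d^{(2\ell)}(v)\le 1$, and it is precisely this term that blows up once $k\gtrsim n^{(\ell-1)^2/\ell(2\ell-1)}$ and forces the cap $1/\tau\le\delta^4kn^{(\ell-1)/\ell(2\ell-1)}$, i.e., the second branch of the maximum. You correctly sense that something must be done for large $k$, but the obstruction is this $j=2\ell$ term (the top co-degree versus the trivial degree bound), not merely the condition $\tau<\delta$; making that explicit is what produces the clean two-regime $\max\{\cdot,\cdot\}$ in the statement.
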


\begin{proof}
Given $\ell \ge 2$, let $\delta > 0$ and $k_0 \in \N$ be the constants given by Theorem~\ref{thm:cycle:hypergraph}, and assume that $\delta$ is sufficiently small so that Theorem~\ref{thm:coveroff} holds with $r = 2\ell$. 
Let $G$ be a graph as described in the proposition, and apply Theorem~\ref{thm:cycle:hypergraph} to~$G$. We obtain a $2\ell$-uniform hypergraph~$\HH$ on vertex set~$E(G)$, satisfying\footnote{We recall that the function $\Delta^{(j)}(k,n)$ was defined in~\eqref{def:Delta}.}:
\begin{itemize}
 \item[$(i)$] $e(\HH) \ge \delta k^{2\ell} n^2$,\smallskip
 \item[$(ii)$] $d_\HH(\sigma) \le \Delta^{(|\sigma|)}(k,n)$ 
 for every $\sigma \subset V(\HH)$ with $1 \le |\sigma| \le 2 \ell-1$,
\end{itemize}
such that each of the edges of $\HH$ corresponds to a copy of~$C_{2\ell}$ in~$G$. We will show that if 
$$\frac{1}{\tau} = \delta^4 k \cdot \min\big\{ k^{1/(\ell-1)}, n^{(\ell-1) / \ell ( 2\ell-1)} \big\},$$ 
then it follows from~$(i)$ and~$(ii)$ that $\delta(\HH,\tau) \le \delta$. Indeed, since $v(\HH) = e(G) = kn^{1+1/\ell}$, we have
\begin{align*}
\delta(\HH,\tau) & \, = \, \frac{1}{e(\HH)} \,\sum_{j=2}^{2\ell} \,\frac{1}{\tau^{j-1}} \sum_{v \in V(\HH)} d^{(j)}(v) \\
& \, \le \, \frac{v(\HH)}{e(\HH)} \Bigg[ \sum_{j=2}^{2\ell-1} \Big( \delta^4 k^{\ell/(\ell-1)} \Big)^{j-1} \Delta^{(j)}(k,n) + \Big( \delta^4 k \cdot n^{(\ell-1) / \ell ( 2\ell-1)} \Big)^{2\ell-1} \Bigg]\\
& \, \le \, \frac{1}{\delta k^{2\ell-1} n^{(\ell - 1)/\ell}} \Bigg[ \sum_{j=2}^{2\ell-1}  \frac{\big( \delta^4 k^{\ell/(\ell-1)} \big)^{j-1} \cdot k^{2\ell-1} n^{(\ell - 1)/\ell}}{\big( \delta k^{\ell/(\ell-1)}\big)^{j-1}} + \big( \delta^4 k \big)^{2\ell-1} n^{(\ell-1) / \ell} \Bigg]\\
& \, \le \, \sum_{j=2}^{2\ell-1} \delta^{3j-4} + \delta^3 \, \le \, \delta,
\end{align*}
as required, where we used the bounds~$d_\HH(\sigma) \le \Delta^{(|\sigma|)}(k,n)$ and $1/\tau \le \delta^4 k^{\ell/(\ell-1)}$ when $2 \le |\sigma| \le 2\ell-1$, and the bounds $d_\HH(\sigma) \le 1$ and $1/\tau \le \delta^4 k n^{(\ell-1) / \ell ( 2\ell-1)}$ when $|\sigma| = 2\ell$. Thus, applying Theorem~\ref{thm:coveroff}, and setting $\eps = \delta^6$, we obtain a collection~$\C$ of at most  
$$\exp\bigg( \frac{\tau \log( 1/\tau ) N}{\delta} \bigg) \, \le \, \exp\bigg( \frac{n^{1 + 1/\ell}}{\eps}  \cdot \max \Big\{ k^{-1/(\ell-1)} \log k, \, n^{-(\ell-1) / \ell(2\ell-1)} \log n \Big\} \bigg)$$
 subsets of $V(\HH) = E(G)$ such that:
 \begin{itemize}
 \item[$(a)$] Every $C_{2\ell}$-free subgraph of~$G$ is a subgraph of some $C \in \C$, and \smallskip
 \item[$(b')$] $e\big( \HH[C] \big) \le \big(1 - \delta \big) e(\HH)$ for all $C \in \C$.
 \end{itemize}
It only remains to show that condition~$(b')$ implies that $e(C) \le (1-\eps) e(G)$ for every $C \in \C$. To prove this, for each $C \in \C$ set 
\[
 \D(C) \, = \, E(\HH) \setminus E(\HH[C]) \, = \, \big\{ e \in E(\HH) \,:\, v \in e \mbox{ for some } v \in V(\HH) \setminus C \big\},
\]
and recall that $d_\HH(v) \le e(\HH) / \big( \delta k n^{1 + 1/\ell} \big)$ for every $v \in V(\HH)$, by~$(i)$ and~$(ii)$. Therefore,
\[
 |\D(C)| \, \le \, \frac{e(\HH)}{\delta k n^{1 + 1/\ell}} \cdot |E(G) \setminus C|.
\]
On the other hand, we have $|\D(C)| = e(\HH) - e(\HH[C]) \ge \delta e(\HH)$, by condition~$(b')$, and so
$$ |E(G) \setminus C| \, \ge \, \delta^2 k n^{1+1/\ell},$$
as required. Hence the proposition follows with $\eps = \delta^6$, as claimed.
\end{proof}

It is straightforward to deduce Theorem~\ref{thm:cycle:containers:turan} from Proposition~\ref{prop:containers_for_graphs}.

\begin{proof}[Proof of Theorem~\ref{thm:cycle:containers:turan}]
We apply Proposition~\ref{prop:containers_for_graphs} repeatedly, each time refining the set of containers obtained at the previous step. More precisely, suppose that after $t$ steps we have constructed a family $\C_t$ such that
$$|\C_t| \, \le \, \exp\bigg( \frac{n^{1 + 1/\ell}}{\eps}  \sum_{i=1}^t \max \Big\{ k(i)^{-1/(\ell-1)} \log k(i), \, n^{-(\ell-1) / \ell(2\ell-1)} \log n \Big\} \bigg),$$
$e(G) \le k(t) n^{1+1/\ell}$ for every $G \in \C_t$, and every $C_{2\ell}$-free graph is a subgraph of some $G \in \C_t$, where 
$$k(i) = \max\big\{ (1 - \eps)^i n^{1 - 1/\ell}, k_0 \big\}$$ 
and $k_0$ and $\eps$ are the constants given by Proposition~\ref{prop:containers_for_graphs}. We will construct a family $\C_{t+1}$ by applying Proposition~\ref{prop:containers_for_graphs} to each graph $G \in \C_t$ with more than $k(t+1) n^{1+1/\ell}$ edges. Finally, we will show that the family $\G_\ell(k) := \C_m$ obtained after $m$ iterations of this process has the required properties, for some suitably chosen $m \in \N$. 

Set $\C_0 = \{K_n\}$, and observe it satisfies the conditions above. Now, given such a family $\C_t$, for each $G \in \C_t$ we will define a collection of containers $\C(G)$ as follows: if $e(G) \le k(t+1) n^{1+1/\ell}$ then set $\C(G) = \{G\}$; otherwise apply Proposition~\ref{prop:containers_for_graphs} to $G$. We obtain a collection~$\C(G)$ of at most
 $$\exp\bigg( \frac{n^{1 + 1/\ell}}{\eps} \max \Big\{ k(t+1)^{-1/(\ell-1)} \log k(t+1), \, n^{-(\ell-1) / \ell(2\ell-1)} \log n \Big\} \bigg)$$
 subgraphs of~$G$ such that:
 \begin{itemize}
  \item[$(a)$] Every $C_{2\ell}$-free subgraph of~$G$ is a subgraph of some $C \in \C(G)$, and\smallskip
  \item[$(b)$] $e(C) \le (1 - \eps) e(G) \le k(t+1) n^{1+1/\ell}$ for every $C \in \C(G)$.
 \end{itemize}
Now simply set $\C_{t+1} = \bigcup_{G \in \C_t} \C(G)$, and observe that $\C_{t+1}$ satisfies the required conditions.

Finally, let us show that if $k \le n^{(\ell-1)^2 / \ell(2\ell-1)} / (\log n)^{\ell-1}$ and $m$ is chosen to be minimal so that $k(m) \le k$, then
$$|\C_m| \, \le \, \exp\Big( O(1) \cdot k^{-1/(\ell-1)} n^{1 + 1/\ell} \log k \Big)$$
as required. To see this, note first that $m = O(\log n)$, and that
$$n^{- (\ell-1) / \ell(2\ell-1)} (\log n)^2 \, = \, O(1) \cdot k^{-1/(\ell-1)} \log k,$$
by our upper bound on $k$. Since $k(i)$ decreases exponentially in $i$, it follows that
$$\sum_{i=1}^m \max \Big\{ k(i)^{-1/(\ell-1)} \log k(i), \, n^{-(\ell-1) / \ell(2\ell-1)} \log n \Big\} \, = \, O(1) \cdot k^{-1/(\ell-1)} \log k,$$
as claimed, and so the theorem follows.
\end{proof}

As noted above, Theorem~\ref{thm:cycle:containers} follows immediately from Theorem~\ref{thm:cycle:containers:turan} by choosing $k$ to be a suitably large constant. Moreover, Theorem~\ref{thm:main} and Theorem~\ref{cor:fewwithfewedges} are immediate consequences of Theorem~\ref{thm:cycle:containers}.

\begin{proof}[Proof of Theorem~\ref{thm:main}]
Let~$\G$ be the collection given by Theorem~\ref{thm:cycle:containers}. Since every $C_{2\ell}$-free graph is a subgraph of some~$G \in \G$, it follows that the number of $C_{2\ell}$-free graphs on $n$ vertices is at most
$$\sum_{G \in \G} 2^{e(G)} \, \le \, 2^{\delta n^{1 + 1/\ell}} \cdot 2^{C n^{1+1/\ell}} \, = \, 2^{O(n^{1+1/\ell})},$$
as required.
\end{proof}

\begin{proof}[Proof of Theorem~\ref{cor:fewwithfewedges}]
Given $\eps > 0$, let~$\G$ be the collection of graphs given by Theorem~\ref{thm:cycle:containers}, applied with $\delta = \eps/2$. Now, for any function $m = m(n)$ with $m = o\big( n^{1 + 1/\ell} \big)$, the number of $C_{2\ell}$-free graphs with $n$ vertices and at most $m$ edges is at most
$$\sum_{G \in \G} \sum_{s = 0}^{m} \binom{e(G)}{s} \, \le \, n^{1 + 1/\ell} \cdot 2^{\delta n^{1 + 1/\ell}} \binom{C n^{1+1/\ell}}{m} \, \le \, 2^{\eps n^{1+1/\ell}}$$
if $n$ is sufficiently large. Since $\eps > 0$ was arbitrary, the claimed bound follows.
\end{proof}

Moreover, it is easy to deduce the following theorem, which is only slightly weaker than Theorem~\ref{thm:randomturan}, from Theorem~\ref{thm:cycle:containers:turan} and Markov's inequality. The lower bound on $p(n)$ is best possible up to the polylog-factor, see Remark~\ref{rmk:randomturan:weak}, below.

\begin{thm}\label{thm:randomturan:weak}
For every $\ell \ge 2$, and every function $p = p(n) \gg n^{-(\ell-1) / (2\ell-1)} (\log n)^{\ell+1}$, 
$$\ex \big( G(n,p), C_{2\ell} \big) \, \le \, p^{1/\ell} n^{1+1/\ell} \log n$$
with high probability as $n \to \infty$. 
\end{thm}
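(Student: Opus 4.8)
The plan is to derive Theorem~\ref{thm:randomturan:weak} from the container statement Theorem~\ref{thm:cycle:containers:turan} by a simple first-moment (Markov) argument over the containers. First I would dispose of the easy range: if $p \ge (\log n)^{-\ell}$ (say), then $\ex\big(G(n,p),C_{2\ell}\big) \le \ex(n,C_{2\ell}) = O\big(n^{1+1/\ell}\big) \le p^{1/\ell}n^{1+1/\ell}\log n$ already, so we may assume $p = o\big((\log n)^{-\ell}\big)$. Next I would set $k = \max\big\{\lceil p^{-(\ell-1)/\ell}\rceil, k_0\big\}$, where $k_0 = k_0(\ell)$ is the constant of Theorem~\ref{thm:cycle:containers:turan}, and check that the hypothesis $p \gg n^{-(\ell-1)/(2\ell-1)}(\log n)^{\ell+1}$ puts this $k$ comfortably inside the admissible range $k \le n^{(\ell-1)^2/\ell(2\ell-1)}/(\log n)^{\ell-1}$. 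Applying that theorem yields a family $\G_\ell(k)$ of at most $\exp\big(C k^{-1/(\ell-1)}n^{1+1/\ell}\log k\big)$ graphs on $[n]$, each with at most $k n^{1+1/\ell}$ edges, covering every $C_{2\ell}$-free graph. Since these containers are deterministic, every $C_{2\ell}$-free subgraph of $G(n,p)$ is contained in $G \cap G(n,p)$ for some $G \in \G_\ell(k)$, and hence $\ex\big(G(n,p),C_{2\ell}\big) \le \max_{G \in \G_\ell(k)} e\big(G \cap G(n,p)\big)$. So it suffices to prove that, writing $T := p^{1/\ell}n^{1+1/\ell}\log n$, with high probability no container satisfies $e\big(G \cap G(n,p)\big) \ge T$.

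For a fixed container $G$, I would bound $\Pr\big(e(G\cap G(n,p)) \ge T\big)$ by the expected number of $T$-element subsets of $E(G)$ that survive in $G(n,p)$ (Markov's inequality applied to this count), which is at most $\binom{e(G)}{T}p^T \le \binom{k n^{1+1/\ell}}{T}p^T \le \big(e k p\, n^{1+1/\ell}/T\big)^T$. The whole point of the choice of $k$ is that $e k p\, n^{1+1/\ell}/T = O\big(k p^{(\ell-1)/\ell}/\log n\big) = O(1/\log n)$, so this probability is at most $\exp\big(-(1+o(1))\,T\log\log n\big)$. Then I would take a union bound over all $|\G_\ell(k)|$ containers: using $k^{-1/(\ell-1)} = O\big(p^{1/\ell}\big)$ and $\log k = O\big(\log(1/p)\big) = O(\log n)$ (the last step since $p \gg n^{-1}$), the probability that some container $G$ has $e(G\cap G(n,p)) \ge T$ is at most $\exp\big(O(p^{1/\ell}n^{1+1/\ell}\log n) - (1+o(1))\,p^{1/\ell}n^{1+1/\ell}\log n\log\log n\big)$, which tends to $0$ because $\log n\log\log n$ swamps the $O(\log n)$ coming from $\log k$, and because $p^{1/\ell}n^{1+1/\ell}\to\infty$ (the hypothesis on $p$ forces $p^{1/\ell}n^{1+1/\ell}\ge n^{\Omega(1)}$). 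This gives the theorem.

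The only step I expect to require real care — and the main obstacle — is the choice of the parameter $k$. One must take $k$ just large enough that the first-moment exponent $T\log\log n \approx p^{1/\ell}n^{1+1/\ell}\log n\log\log n$ beats $\log|\G_\ell(k)| \approx C k^{-1/(\ell-1)}n^{1+1/\ell}\log k$, and this is possible precisely because Theorem~\ref{thm:cycle:containers:turan} stays valid up to $k$ of order roughly $n^{(\ell-1)^2/\ell(2\ell-1)}$, which under the hypothesis on $p$ includes $k \approx p^{-(\ell-1)/\ell}$ with room to spare. Note that it is the extra $\log n$ factor in the conclusion (relative to Theorem~\ref{thm:randomturan}) that supplies the $\log\log n$ of slack making the union bound go through; recovering the sharp bound requires instead the more delicate structural argument of Section~\ref{sec:Turan}.
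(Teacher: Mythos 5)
Your proposal is correct and follows essentially the same route as the paper: apply Theorem~\ref{thm:cycle:containers:turan} with $k$ chosen so that $k^{-\ell/(\ell-1)}\approx p$ (the paper takes $p=k^{-\ell/(\ell-1)}\log k$, you take $k\approx p^{-(\ell-1)/\ell}$, which differs only by logarithmic factors and works equally well), then combine a first-moment bound $\binom{kn^{1+1/\ell}}{m}p^m$ per container with a union bound over the $\exp\big(Ck^{-1/(\ell-1)}n^{1+1/\ell}\log k\big)$ containers, the extra $\log n$ in the target bound supplying the $\log\log n$ of slack exactly as you describe. The parameter checks (that $k$ lies in the admissible range and that the container count is beaten) match the paper's.
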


\begin{proof}
Choose $k$ so that $p = k^{-\ell/(\ell-1)} \log k$, and let~$\G_\ell(k)$ be the collection of graphs given by Theorem~\ref{thm:cycle:containers:turan}. Observe that, if there exists a $C_{2\ell}$-free subgraph of $G(n,p)$ with $m$ edges, then some graph in $\G_\ell(k)$ must contain at least $m$ edges of $G(n,p)$. By Theorem~\ref{thm:cycle:containers:turan}, the expected number of such graphs is at most
$$\exp\Big( C k^{-1/(\ell-1)} n^{1 + 1/\ell} \log k \Big) \cdot {k n^{1+1/\ell} \choose m} \cdot p^m \, \le \, \left( \frac{O(1) \cdot p k n^{1+1/\ell}}{m} \right)^m \, \to \, 0$$ 
as $n \to \infty$ if $k \le n^{(\ell-1)^2 / \ell(2\ell-1)} / (\log n)^{\ell-1}$ and
$$m \, \gg \, \max\Big\{ pk  n^{1+1/\ell} , \, k^{-1/(\ell-1)} n^{1 + 1/\ell} \log k \Big\}.$$
Since one can check that these inequalities hold if 
$$p \, \gg \, n^{-(\ell-1) / (2\ell-1)} (\log n)^{\ell+1} \qquad \text{and} \qquad m \, \ge \, p^{1/\ell} n^{1+1/\ell} \log n,$$ 
the result follows.
\end{proof}

\begin{rmk}\label{rmk:randomturan:weak}
Note that, since $\ex \big( G(n,p), C_{2\ell} \big)$ is increasing in $p$, the bound in Theorem~\ref{thm:randomturan:weak} implies that, for every $p(n) \le n^{-(\ell-1) / (2\ell-1)} (\log n)^{2\ell}$, we have
$$\ex \big( G(n,p), C_{2\ell} \big) \, \le \, n^{1+1/(2\ell-1)} (\log n)^3$$
with high probability as $n \to \infty$. This bound is sharp up to the polylog-factor, cf.~\eqref{eq:KKSthm}. 
\end{rmk}

To finish this section, let us prove Proposition~\ref{prop:conj:implies:thm}. In fact, since the lower bound on $\ex(n,H)$ implicit in that statement is irrelevant to our counting argument (and moreover is usually unknown), we shall in fact prove the following rephrased version of the proposition.

\begin{defn}\label{def:ESgood}
Let us say that a bipartite graph $H$ is \emph{Erd\H{o}s-Simonovits good for a function $m = m(n)$} if there exist constants $C > 0$, $\eps > 0$ and $k_0 \in \N$ such that the following holds. Let $k \ge k_0$, and suppose that~$G$ is a graph with~$n$ vertices and~$k \cdot m(n)$ edges. Then there exists a (non-empty) collection~$\HH$ of copies of $H$ in $G$, satisfying
$$d_\HH(\sigma) \le \ds\frac{C \cdot |\HH|}{k^{(1 + \eps)(|\sigma| - 1)} e(G)} \quad \textup{for every $\sigma \subset V(\HH)$ with $1 \le |\sigma| \le e(H)$.}$$
\end{defn}

In this language, Conjecture~\ref{conj:balancedES} states that every bipartite graph $H$ is Erd\H{o}s-Simonovits good for the function $\ex(n,H)$.

\begin{prop}\label{prop:generalH}
Let $H$ be a bipartite graph and let $m \colon \N \to \N$ be a function. If $H$ is Erd\H{o}s-Simonovits good for $m$, then there are at most $2^{O(m(n))}$ $H$-free graphs on $n$ vertices. 
\end{prop}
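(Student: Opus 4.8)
The plan is to run the iterated-container argument sketched in Section~\ref{sec:containers}, exactly as in the proofs of Proposition~\ref{prop:containers_for_graphs} and Theorem~\ref{thm:cycle:containers:turan}: starting from $K_n$, we repeatedly apply the hypergraph container theorem (Theorem~\ref{thm:coveroff}) to the hypergraph supplied by the Erd\H{o}s--Simonovits good property of $H$, producing a tree of subgraphs of $K_n$ whose leaves cover every $H$-free graph on $[n]$ and each have $O(m(n))$ edges; counting the leaves then gives the bound. The general case is in fact marginally cleaner than the $C_{2\ell}$ one, since the degree hypothesis in Definition~\ref{def:ESgood} controls sets of \emph{all} sizes $1 \le |\sigma| \le e(H)$ uniformly, so no separate term for the top layer is needed.

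\emph{Single step.} Write $r = e(H)$, fix $\delta < \delta_0(r)$ small enough for Theorem~\ref{thm:coveroff}, and let $C,\eps,k_0$ be the constants of Definition~\ref{def:ESgood}, enlarging $k_0$ as needed below. Suppose $G$ is a graph on $[n]$ with $e(G) = k\cdot m(n)$ and $k \ge k_0$. The good property gives a non-empty $r$-uniform hypergraph $\HH$ on $V(\HH) = E(G)$ with $d_\HH(\sigma) \le \frac{C\,e(\HH)}{k^{(1+\eps)(|\sigma|-1)}e(G)}$; since $|V(\HH)| = e(G)$ this yields $\sum_{v} d^{(j)}(v) \le C\,e(\HH)\,k^{-(1+\eps)(j-1)}$, so with $\tau = \tau(k) := k^{-(1+\eps/2)}$ (whence $\tau k^{1+\eps} = k^{\eps/2}$) we obtain
$$\delta(\HH,\tau) \, \le \, \sum_{j=2}^{r}\frac{C}{\big(\tau k^{1+\eps}\big)^{j-1}} \, = \, C\sum_{i=1}^{r-1}k^{-i\eps/2} \, \le \, \delta$$
once $k_0$ is large, and $\tau \le k_0^{-(1+\eps/2)} < \delta$ similarly. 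Theorem~\ref{thm:coveroff} then produces at most $\exp\!\big(\tau\log(1/\tau)\,e(G)/\delta\big) = \exp\!\big(O(1)\cdot k^{-\eps/2}\log k\cdot m(n)\big)$ subgraphs of $G$ which cover the independent sets of $\HH$ --- hence all $H$-free subgraphs of $G$ --- and satisfy $e(\HH[W]) \le (1-\delta)e(\HH)$ for each container $W$. Just as in the proof of Proposition~\ref{prop:containers_for_graphs}, the $|\sigma|=1$ case of the degree bound, $d_\HH(v) \le C\,e(\HH)/e(G)$, converts this into $e(W) \le (1-\delta/C)\,e(G)$.

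\emph{Iteration.} Set $\C_0 = \{K_n\}$ (we may assume $\binom{n}{2} > k_0\,m(n)$, else there is nothing to prove), put $a = \binom{n}{2}/m(n) \le n^2$ and $\beta = \delta/C$, and at round $i$ apply the single step to every member of $\C_{i-1}$ with more than $k_i\,m(n)$ edges, where $k_i = \max\{(1-\beta)^i a,\,k_0\}$. An easy check (using $(1-\beta)k_{i-1}\le k_i$) shows every member of $\C_i$ then has at most $k_i\,m(n)$ edges, and since $x\mapsto x^{-\eps/2}\log x$ is decreasing for $x \ge k_0$ we may bound the branching at round $i$ by its value at $k = k_i$, giving $|\C_t| \le \exp\!\big(O(m(n))\sum_{i=1}^{t}k_i^{-\eps/2}\log k_i\big)$. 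Choosing $m$ minimal with $k_m = k_0$ we have $m = O(\log n)$, and the key estimate is that the sum stays bounded: using $\log x \le x^{\eps/4}$ for $x \ge k_0$ and the geometric decay of the $k_i$,
$$\sum_{i=1}^{m}k_i^{-\eps/2}\log k_i \, \le \, \sum_{i=1}^{m}k_i^{-\eps/4} \, \le \, a^{-\eps/4}\sum_{i=1}^{m}(1-\beta)^{-i\eps/4} \, = \, O\!\big(a^{-\eps/4}(1-\beta)^{-m\eps/4}\big) \, = \, O(1),$$
since $(1-\beta)^{-m} = O(a)$ by minimality of $m$. Hence $\C_m$ has $\exp(O(m(n)))$ members, each a graph on $[n]$ with at most $k_0\,m(n) = O(m(n))$ edges; as every $H$-free graph on $[n]$ is a subgraph of one of them, summing $2^{e(G)}$ over $G\in\C_m$ gives at most $2^{O(m(n))}$ $H$-free graphs, as required.

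The step I expect to be the main obstacle is the last display --- checking that the per-round container bounds multiply to $\exp(O(m(n)))$ rather than $\exp(O(m(n)\log n))$. This works only because $\tau k^{1+\eps} = k^{\eps/2}\to\infty$, i.e.\ because the exponent $1+\eps$ in Definition~\ref{def:ESgood} is \emph{strictly} larger than $1$: this is simultaneously what makes $\delta(\HH,\tau)$ summable in the single step and what forces the $k_i$ to shrink geometrically, so that $\sum_i k_i^{-\eps/2}\log k_i$ is dominated by its last, constant-order term. (With $\eps = 0$ the scheme collapses, in keeping with the failure of the method for $C_4$.) The remaining ingredients --- the degree-to-edge conversion, the bookkeeping $m = O(\log n)$, and the final summation --- are routine and follow the $C_{2\ell}$ argument of Section~\ref{sec:proof} essentially line for line.
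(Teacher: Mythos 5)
Your proposal is correct and follows essentially the same route as the paper's (sketched) proof: a single application of Theorem~\ref{thm:coveroff} with $\tau$ of the form $k^{-(1+\eps')}$ for some $0<\eps'<\eps$, which makes $\delta(\HH,\tau)$ a convergent geometric sum in $k^{-\eps'}$, followed by the same geometric iteration as in Theorem~\ref{thm:cycle:containers:turan}. The only (immaterial) difference is your choice $\tau = k^{-(1+\eps/2)}$ versus the paper's $1/\tau = \delta^2 k^{1+\eps}$, and you fill in the iteration bookkeeping that the paper leaves to the reader.
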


\begin{proof}[Sketch proof]
The proof is almost identical to (and actually slightly simpler than) that of Theorem~\ref{thm:main}, so let us emphasize only the differences in the general case. We first prove a statement analogous to Proposition~\ref{prop:containers_for_graphs}, except that the collection $\C$ consists of at most
$$\exp\Big( k^{-\alpha} n^{1+1/\ell} \Big)$$
subgraphs of~$G$ which cover the $H$-free graphs on $n$ vertices, where $\alpha = \eps^2$, say. To do so, set $1/\tau = \delta^2 k^{1+\eps}$ and observe that, if $\delta < 1/C^2$, then
\begin{align*}
\delta(\HH,\tau) & \, = \, \frac{1}{e(\HH)} \,\sum_{j=2}^{e(H)} \,\frac{1}{\tau^{j-1}} \sum_{v \in V(\HH)} d^{(j)}(v) \\
& \, \le \, \frac{1}{e(\HH)} \sum_{j=2}^{e(H)}  \delta^{2(j-1)} k^{(1+\eps)(j-1)} \sum_{e \in E(G)} \frac{C \cdot e(\HH)}{k^{(1 + \eps)(j - 1)} e(G)} \, \le \, \delta.
\end{align*}
where $\HH$ denotes the $e(H)$-uniform hypergraph that encodes copies of $H$ in $G$.

The rest of the proof is exactly the same as above, except that our family $\G_\ell(k)$ of containers (as in Theorem~\ref{thm:cycle:containers:turan}) might consist of as many as $\exp\big( k^{-\alpha/2} n^{1 + 1/\ell} \big)$ graphs, each with at most $k n^{1 + 1/\ell}$ edges. We leave the details to the reader.
\end{proof}

\section{The Tur\'an problem on the Erd\H{o}s-R\'enyi random graph}\label{sec:Turan}

In this section we will show how to use the hypergraph container method in a slightly more complicated way in order to remove the unwanted factor of $\log n$ from the bound in Theorem~\ref{thm:randomturan:weak}, and hence to deduce Theorem~\ref{thm:randomturan}. 

Let us introduce some notation to simplify the statements which follow. First, let $\I = \I(n)$ denote the collection of $C_{2\ell}$-free graphs with $n$ vertices, and let $\G = \G(n,k)$ denote the collection of all graphs with $n$ vertices and at most $k n^{1+1/\ell}$ edges. By a \emph{coloured graph}, we mean a graph together with an arbitrary labelled partition of its edge set. 

The following structural result turns out to be exactly what we need.

\begin{thm}\label{thm:cycle:containers:turan:refined}
For each $\ell \ge 2$, there exists a constant $C = C(\ell)$ such that the following holds for all sufficiently large $n,k \in \N$ with $k \le n^{(\ell-1)^2 / \ell(2\ell-1)} / (\log n)^{2\ell-2}$. There exists a collection $\S$ of coloured graphs with $n$ vertices and at most $C k^{-1/(\ell-1)} n^{1 + 1/\ell}$ edges, and functions 
$$g \colon \I \to \S \qquad \text{and} \qquad h \colon \S \to \G(n,k)$$
with the following properties:
\begin{itemize}
\item[$(a)$] 
For every $s \ge 0$, the number of coloured graphs in $\S$ with $s$ edges is at most
$$\bigg( \frac{C n^{1+1/\ell}}{s} \bigg)^{\ell s}  \cdot \exp\Big( C k^{-1/(\ell-1)} n^{1 + 1/\ell} \Big).$$
\item[$(b)$] $g(I) \subset I \subset g(I) \cup h(g(I))$ for every $I \in \I$.
\end{itemize}
\end{thm}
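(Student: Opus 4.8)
The plan is to redo the container iteration of Theorem~\ref{thm:cycle:containers:turan}, but keeping track of more information: rather than just recording which container a $C_{2\ell}$-free graph $I$ lands in, we record \emph{how the edges of $I$ were removed} along the branch of the container tree leading to that container. At each application of Theorem~\ref{thm:coveroff} to a graph $G$ (via the hypergraph $\HH$ produced by Theorem~\ref{thm:cycle:hypergraph}), an independent set $I\subset E(G)$ is placed inside a container $C\in\C(G)$ with $e(C)\le(1-\eps)e(G)$. The edges of $G\setminus C$ that actually lie in $I$ will be called the \emph{fingerprint} of $I$ at this step; by the standard container analysis (or directly from Theorem~\ref{thm:coveroff}), the number of possible fingerprints is small — roughly $\exp(O(\tau\log(1/\tau)N))$ — since the container is determined by a bounded ``certificate'' of size $O(\tau N)$ inside $I$ and this certificate may be taken to consist of a bounded number of edges of $I$ at each of $O(\log n)$ rounds. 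Concatenating the fingerprints along the branch gives a subgraph $g(I)\subset I$; colouring its edges by the round at which they were deleted gives the coloured graph in $\S$. The map $h$ sends a coloured graph to the final container reached by following the recorded deletions. Property~$(b)$, namely $g(I)\subset I\subset g(I)\cup h(g(I))$, is then immediate: $g(I)$ consists of edges of $I$, and every edge of $I$ not in $g(I)$ survived into the final container $h(g(I))$.

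For the size bound in part~$(a)$, I would argue as follows. A coloured graph $S\in\S$ with $s$ edges is specified by (i) the colour classes $S_1,\dots,S_m$ with $\sum|S_i| = s$ and $m=O(\log n)$, and (ii) for each round $i$, the choice of which edges of the current graph form the fingerprint $S_i$. The point is that the fingerprint at round $i$ is a set of at most $s$ edges chosen from a graph with at most $k(i)n^{1+1/\ell}$ edges, where $k(i)\approx(1-\eps)^i n^{1-1/\ell}$; summing $\binom{k(i)n^{1+1/\ell}}{|S_i|}$ over a branch and using $\sum_i |S_i|=s$ gives a factor of order $(Cn^{1+1/\ell}/s)^{\ell s}$ (the exponent $\ell s$, rather than $s$, coming from the fact that $k(i)n^{1+1/\ell}\le n^{2}= (n^{1+1/\ell})^{\ell/(\ell+1)\cdot(\ell+1)/\ell}$, i.e.\ the ground set has size at most roughly $(n^{1+1/\ell})^{\ell}$ up to polylog, so each chosen edge contributes a polynomial-in-$n^{1+1/\ell}$ factor whose logarithm is $\le \ell\log(n^{1+1/\ell})$). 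The remaining freedom — the number of distinct branches, and the number of ways to partition $s$ edges into $m=O(\log n)$ colour classes — contributes a factor $\exp(O(k^{-1/(\ell-1)}n^{1+1/\ell}))$, exactly as in the proof of Theorem~\ref{thm:cycle:containers:turan} (the bound on $k$ is what makes the $k^{-1/(\ell-1)}\log k$ term dominate the $n^{-(\ell-1)/\ell(2\ell-1)}\log n$ term and absorb the $O(\log n)$ factors). Finally the bound $e(S)\le C k^{-1/(\ell-1)}n^{1+1/\ell}$ on the total number of edges of a coloured graph in $\S$ is obtained exactly as the bound on $|\C_m|$ was: the total fingerprint size along a branch is, up to the constant, $\sum_i \tau(i) N(i) = O(k^{-1/(\ell-1)}n^{1+1/\ell})$.

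The main obstacle, I expect, is making precise the claim that the container at each step can be certified by a small set of edges \emph{that lie in $I$} — i.e.\ extracting a genuine ``fingerprint'' rather than merely an abstract container index — and controlling the number of such fingerprints per step by a clean bound of the form $(\text{poly}(n^{1+1/\ell}))^{|S_i|}$ with the correct exponent $\ell$. This requires looking inside the proof of Theorem~\ref{thm:coveroff} (the algorithm of~\cite{BMS,ST}), where the container of an independent set is produced by a deterministic ``scythe'' procedure whose output is determined by a set of at most $O(\tau N)$ vertices of $\HH$ selected from $I$; one must check that this selected set can be taken to be small \emph{per round} and that its contribution to the counting is as claimed. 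A secondary technical point is bookkeeping the dependence of the per-round ground-set size $k(i)n^{1+1/\ell}$ on $i$ so that the product over the $O(\log n)$ rounds telescopes to the stated $(Cn^{1+1/\ell}/s)^{\ell s}$; this is a routine but slightly delicate optimisation, handled by splitting the sum over rounds according to whether $|S_i|$ is large or small and using convexity of $\log\binom{N}{\cdot}$. Everything else is a direct reprise of the iteration already carried out in the proof of Theorem~\ref{thm:cycle:containers:turan}.
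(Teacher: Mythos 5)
Your proposal matches the paper's proof in all essentials: the paper iterates the fingerprint form of the container lemma (stated as Theorem~\ref{thm:containers:turan}, imported from~\cite{BMS,ST}) via Proposition~\ref{prop:refined_containers_for_graph}, sets $g(I)=(T_1,\dots,T_m)$ and $h(g(I))=G_m$ where $G_j=f_{G_{j-1}}(T_j)\setminus T_j$, and handles the counting by exactly the per-round binomial-coefficient optimisation you describe (Observation~\ref{obs:optimize}), with the rounds governed by the $n^{-(\ell-1)/\ell(2\ell-1)}$ term absorbed into the $\exp(Ck^{-1/(\ell-1)}n^{1+1/\ell})$ factor using the upper bound on $k$. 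The only slip is your opening definition of the fingerprint as the edges of $I$ in $G\setminus C$, which is empty since $I\subset C$; the correct object, which you use thereafter, is the certificate $T\subset I$ itself, deleted from the container before recursing so that property~$(b)$ holds.
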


Note that Theorem~\ref{thm:cycle:containers:turan:refined} implies Theorem~\ref{thm:cycle:containers:turan}. In order to prove Theorem~\ref{thm:cycle:containers:turan:refined}, we will need the following slight improvement of Theorem~\ref{thm:coveroff}, which was also proved by Balogh, Morris and Samotij~\cite[Proposition~3.1]{BMS} and by Saxton and Thomason~\cite[Theorem~6.2]{ST}.\footnote{To be precise, Theorem~6.2 in~\cite{ST} is stated where~$T$ is a tuple of vertex sets rather than a single vertex set, but it is straightforward to deduce this form from the methods of~\cite{ST}.}

\begin{thm}\label{thm:containers:turan}
Let $r \ge 2$ and let $0 < \delta < \delta_0(r)$ be sufficiently small. Let $\HH$ be an $r$-graph with~$N$ vertices, and suppose that $\delta(\HH,\tau) \le \delta$ for some $\tau > 0$. Then there exists a collection $\C$ of subsets of $V(\HH)$, and a function $f \colon V(\HH)^{(\le \tau N / \delta)} \to \C$ such that:
 \begin{itemize}
 \item[$(a)$] for every independent set $I$ there exists $T \subset I$ with $|T| \le \tau N / \delta$ and $I \subset f(T)$,\smallskip
 \item[$(b)$] $e\big( \HH[C] \big) \le \big(1 - \delta \big) e(\HH)$ for every $C \in \C$.
 \end{itemize}
\end{thm}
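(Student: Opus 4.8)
The plan is to derive this statement directly from the container algorithm of Balogh, Morris and Samotij~\cite{BMS} and Saxton and Thomason~\cite{ST}: indeed, Theorem~\ref{thm:containers:turan} is, up to the cosmetic reformulation mentioned in the footnote, exactly~\cite[Proposition~3.1]{BMS} and~\cite[Theorem~5.2]{ST}, so the only work is to put their proof of Theorem~\ref{thm:coveroff} into the present form. Recall that the proof of Theorem~\ref{thm:coveroff} in either of those references fixes an arbitrary linear order on $V(\HH)$ and defines a deterministic procedure --- the \emph{container algorithm} --- which, on input an independent set $I$, returns a pair of vertex sets $\big(S(I),C(I)\big)$ with $S(I)\subset I\subset C(I)$, with $e\big(\HH[C(I)]\big)\le(1-\delta)e(\HH)$, and with $|S(I)|\le\tau N/\delta$; here the bound on the size of the \emph{fingerprint} $S(I)$ is exactly where the hypothesis $\delta(\HH,\tau)\le\delta$ enters, through a short potential/counting estimate on the number of edges of the residual hypergraph. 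The family $\C$ in Theorem~\ref{thm:coveroff} is precisely $\{C(I):I\text{ independent}\}$.

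The one extra ingredient needed here is the observation that $C(I)$ is a function of $S(I)$ alone (given $\HH$ and the fixed order). Granting this, I would set $\C=\{C(I):I\text{ independent}\}$, fix any $C_0\in\C$, and define $f\colon V(\HH)^{(\le\tau N/\delta)}\to\C$ by $f(T)=C(I)$ if $T=S(I)$ for some independent $I$ (well defined by the observation) and $f(T)=C_0$ otherwise. Then part~$(b)$ is immediate since every element of $\C$ has the form $C(I)$, and for part~$(a)$ one takes $T=S(I)$, which lies in the domain of $f$ as $|S(I)|\le\tau N/\delta$, satisfies $T\subset I$, and has $f(T)=C(I)\supset I$.

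To see why $C(I)$ depends only on $S(I)$, one writes out the algorithm: it runs in rounds, each round selecting a \emph{pivot} --- the maximum-degree vertex of the current residual hypergraph, ties broken by the order --- and then either (if the pivot lies in $I$ and a co-degree threshold is met) adding the pivot to the fingerprint and deleting from the residual hypergraph a set of vertices determined by that hypergraph and the pivot, or else deleting the pivot and proceeding; the algorithm halts once the residual hypergraph has at most $(1-\delta)e(\HH)$ edges, and $C(I)$ is its vertex set. The key point is that \emph{every} pivot lying in $I$ is added to the fingerprint, so that for any pivot $v$ one has $v\in I$ if and only if $v\in S(I)$; hence a straightforward induction on the round number shows that the residual hypergraph at the start of each round, the pivot chosen, and the action taken are all functions of $\HH$, the order and the fingerprint built so far, so that re-running the algorithm with the oracle ``$v\in I$?'' replaced by ``$v\in S(I)$?'' reproduces the computation step for step and in particular returns the same container. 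I expect this determinacy claim to be the only genuine obstacle: it is routine but requires the precise form of the algorithm, and in the version of~\cite{ST} one must additionally note that in the $(r-1)$-fold iterated construction there the same argument applies level by level (each level's pivots lying in $I$ are recorded in that level's part of the fingerprint), so the whole iterated run, and hence $C(I)$, is a function of the single set obtained by taking the union of the parts --- which is the single-set form asserted here.
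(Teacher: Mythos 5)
Your proposal is correct and matches the paper's treatment: the paper offers no proof of Theorem~\ref{thm:containers:turan} beyond citing~\cite[Proposition~3.1]{BMS} and~\cite[Theorem~5.2]{ST}, with a footnote noting that the single-set form of $T$ follows straightforwardly from the methods of~\cite{ST}. Your elaboration --- that the container algorithm's output $C(I)$ is determined by the fingerprint $S(I)$ alone (since every pivot in $I$ is recorded), and that in the iterated construction of~\cite{ST} one takes the union of the per-level fingerprints --- is precisely the routine verification the footnote alludes to.
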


Note that Theorem~\ref{thm:containers:turan} implies Theorem~\ref{thm:coveroff}. The next step in the proof of Theorem~\ref{thm:cycle:containers:turan:refined} is the following strengthened version of Proposition~\ref{prop:containers_for_graphs}.

\begin{prop} \label{prop:refined_containers_for_graph}
For every $\ell \ge 2$, there exists $k_0 \in \N$ and $\eps > 0$ such that the following holds for every $k \ge k_0$ and every $n \in \N$. Set
\begin{equation}\label{def:mu}
\mu \, = \, \frac{1}{\eps} \cdot \max\Big\{ k^{-1/(\ell-1)}, \, n^{-(\ell-1) / \ell(2\ell-1)} \Big\}.
\end{equation}
Given a graph~$G$ with~$n$ vertices and $k n^{1+1/\ell}$ edges, there exists a function $f_G$ that maps subgraphs of $G$ to subgraphs of $G$, such that, for every $C_{2\ell}$-free subgraph~$I \subset G$, \begin{itemize}
  \item[$(a)$] There exists a subgraph $T=T(I) \subset I$ with $e(T) \le \mu n^{1+1/\ell}$ and $I \subset f_G(T)$, and\smallskip
  \item[$(b)$] $e\big( f_G(T(I)) \big) \le (1 - \eps) e(G)$.
 \end{itemize}
\end{prop}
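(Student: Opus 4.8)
The proof is a direct adaptation of that of Proposition~\ref{prop:containers_for_graphs}, with Theorem~\ref{thm:coveroff} replaced by its strengthening Theorem~\ref{thm:containers:turan}. As there, fix $\delta > 0$ and $k_0 \in \N$ so that Theorems~\ref{thm:cycle:hypergraph} and~\ref{thm:containers:turan} hold (the latter with $r = 2\ell$), and apply Theorem~\ref{thm:cycle:hypergraph} to $G$ to obtain a $2\ell$-uniform hypergraph $\HH$ on vertex set $E(G)$ with $e(\HH) \ge \delta k^{2\ell} n^2$ and $d_\HH(\sigma) \le \Delta^{(|\sigma|)}(k,n)$ for every $\sigma$ with $1 \le |\sigma| \le 2\ell - 1$, each edge of which corresponds to a copy of $C_{2\ell}$ in $G$. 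Setting
$$\frac{1}{\tau} \, = \, \delta^4 k \cdot \min\big\{ k^{1/(\ell-1)}, \, n^{(\ell-1)/\ell(2\ell-1)} \big\},$$
the identical computation to the one in the proof of Proposition~\ref{prop:containers_for_graphs} shows that $\delta(\HH,\tau) \le \delta$ (and $\tau < \delta$ provided $k_0$ is large enough).

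Now apply Theorem~\ref{thm:containers:turan} with $r = 2\ell$ and $N = v(\HH) = e(G) = k n^{1+1/\ell}$. This yields a collection $\C$ of subsets of $V(\HH) = E(G)$ and a function $f \colon E(G)^{(\le \tau N/\delta)} \to \C$ such that every independent set $I$ of $\HH$ admits a subset $T \subset I$ with $|T| \le \tau N / \delta$ and $I \subset f(T)$, and with $e\big( \HH[C] \big) \le (1-\delta) e(\HH)$ for every $C \in \C$. Since a $C_{2\ell}$-free subgraph of $G$ contains no edge of $\HH$, it is an independent set of $\HH$; moreover
$$\frac{\tau N}{\delta} \, = \, \frac{k n^{1+1/\ell}}{\delta \cdot \delta^4 k \cdot \min\{ k^{1/(\ell-1)}, n^{(\ell-1)/\ell(2\ell-1)} \}} \, = \, \frac{n^{1+1/\ell}}{\delta^5} \cdot \max\big\{ k^{-1/(\ell-1)}, \, n^{-(\ell-1)/\ell(2\ell-1)} \big\},$$
so with $\eps = \delta^6$ (say) we have $\tau N / \delta \le \mu n^{1+1/\ell}$. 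We therefore define $f_G \colon E(G)^{(\le \mu n^{1+1/\ell})} \to \C$ by $f_G(T) = f(T)$ when $|T| \le \tau N/\delta$, and letting $f_G$ be an arbitrary fixed element of $\C$ otherwise. For each $C_{2\ell}$-free $I \subset G$, the set $T = T(I)$ supplied by Theorem~\ref{thm:containers:turan} then satisfies $e(T(I)) = |T(I)| \le \tau N/\delta \le \mu n^{1+1/\ell}$ and $I \subset f(T(I)) = f_G(T(I))$, which gives part~$(a)$.

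For part~$(b)$ the argument is verbatim the last paragraph of the proof of Proposition~\ref{prop:containers_for_graphs}: writing $C = f_G(T(I))$ and $\D(C) = E(\HH) \setminus E(\HH[C])$, the bound $d_\HH(v) \le e(\HH)/\big( \delta k n^{1+1/\ell} \big)$ (immediate from the two displayed properties of $\HH$) gives $|\D(C)| \le \frac{e(\HH)}{\delta k n^{1+1/\ell}} \cdot |E(G) \setminus C|$, while $|\D(C)| = e(\HH) - e(\HH[C]) \ge \delta e(\HH)$ by the second conclusion of Theorem~\ref{thm:containers:turan}; hence $|E(G) \setminus C| \ge \delta^2 k n^{1+1/\ell}$ and so $e(C) \le (1 - \delta^2) e(G) \le (1-\eps) e(G)$. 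Taking $\eps = \delta^6$ and $k_0$ the larger of the thresholds coming from Theorems~\ref{thm:cycle:hypergraph} and~\ref{thm:containers:turan} completes the proof. The only genuinely new point — and the one (modest) place requiring care — is the arithmetic confirming that the bound $\tau N/\delta$ on the size of the fingerprint $T$ agrees, up to the factor $1/\eps$, with the target quantity $\mu n^{1+1/\ell}$ in~\eqref{def:mu}; the rest is a transcription of the earlier argument, and in particular no bound on $|\C|$ is needed here (that will be recovered later by counting fingerprints).
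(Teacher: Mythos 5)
Your proof is correct and follows exactly the route the paper intends: the paper itself states that the deduction of Proposition~\ref{prop:refined_containers_for_graph} from Theorem~\ref{thm:containers:turan} is identical to that of Proposition~\ref{prop:containers_for_graphs} from Theorem~\ref{thm:coveroff} and leaves the details to the reader, and you have supplied precisely those details, including the one genuinely new check that $\tau N/\delta \le \mu n^{1+1/\ell}$.
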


The deduction of Proposition~\ref{prop:refined_containers_for_graph} from Theorem~\ref{thm:containers:turan} is identical to that of Proposition~\ref{prop:containers_for_graphs}  from Theorem~\ref{thm:coveroff}, and so we leave the details to the reader. 

In the proof of Theorem~\ref{thm:cycle:containers:turan:refined}, we will need the following straightforward lemma (see, for example,~\cite[Lemma~4.3]{CM}).

\begin{lemma}\label{obs:optimize}
Let $M > 0$, $s > 0$ and $0 < \delta < 1$. If $a_1,\ldots,a_m \in \RR$ satisfy $s = \sum_j a_j$ and $1 \le a_j \le (1-\delta)^j M$ for each $j \in [m]$, then
$$s \log s \, \le \, \sum_{j=1}^m a_j \log a_j + O(M).$$
\end{lemma} 

We can now deduce Theorem~\ref{thm:cycle:containers:turan:refined}. 

\begin{proof}[Proof of Theorem~\ref{thm:cycle:containers:turan:refined}]
We construct the functions $g$ and $h$ and the family $\S$ as follows. Given a $C_{2\ell}$-free graph $I \in \I$, we repeatedly apply Proposition~\ref{prop:refined_containers_for_graph}, first to the complete graph $G_0 = K_n$, then to the graph $G_1 = f_{G_0}(T_1) \setminus T_1$, where $T_1 \subset I$ is the set guaranteed to exist by part~$(a)$, then to the graph $G_2 = f_{G_1}(T_2) \setminus T_2$, where $T_2 \subset I \cap G_1 = I \setminus T_1$, and so on. We continue until we arrive at a graph $G_m$ with at most $k n^{1+1/\ell}$ edges, and set 
$$g(I) = (T_1, \ldots, T_m) \qquad \text{and} \qquad h\big( g(I) \big) = G_m.$$ 
Since $G_m$ depends only on the sequence $(T_1,\ldots,T_m)$, the function $h$ is well-defined.

It remains to bound the number of coloured graphs in $\S$ with $s$ edges. To do so, it suffices to count the number of choices for the sequence of graphs $(T_1,\ldots,T_m)$ with $\sum_j e(T_j) = s$. For each $j \ge 1$, define $k(j)$ and $\mu(j)$ as follows:
$$e\big( G_{m-j} \big) = k(j) n^{1+1/\ell} \quad \text{and} \quad \mu(j) = \frac{1}{\eps} \cdot \max\Big\{ k(j)^{-1/(\ell-1)}, \, n^{-(\ell-1) / \ell(2\ell-1)} \Big\},$$
and note that
$$k(j) \ge (1 - \eps)^{-j+1} k, \qquad T_{j+1} \subset G_j \qquad \text{and} \qquad e(T_{m-j}) \le \mu(j) n^{1+1/\ell}.$$
Thus, fixing $k$, $\eps$ and $s$ as above, and writing
$$\K(m) \, = \, \Big\{ \k = (k(1),\ldots,k(m)) \, : \, (1 - \eps)^{-j+1} k \le k(j) \le n^{1 - 1/\ell} \Big\}$$
for each $m \in \N$, and
$$\A(\k) \, = \, \Big\{ \a = (a(1),\ldots,a(m)) \, : \, a(j) \le \mu(j) n^{1 + 1/\ell} \text{ and } \sum_j a(j) = s \Big\},$$
for each $\k \in \K(m)$, it follows that the number of coloured graphs in $\S$ with $s$ edges is at most
$$\sum_{m = 1}^\infty \sum_{\k \in \K(m)} \sum_{\a \in \A(\k)} \prod_{j=1}^m { k(j) n^{1+1/\ell} \choose a(j)}.$$
Given $m \in \N$, $\k \in \K(m)$ and $\a \in \A(\k)$, let us partition the product over $j$ according to whether or not $\mu(j) = \frac{1}{\eps} \cdot n^{-(\ell-1) / \ell(2\ell-1)}$. Since $\K(m) = \emptyset$ for all $m \gg \log n$, the product of the terms for which this is the case is at most
$$\big( n^2 \big)^{\sum_j a(j)} \, \le \, \exp\Big( O(1) \cdot n^{1 + 1/\ell - (\ell-1) / \ell(2\ell-1)} (\log n)^2 \Big) \, \le \, \exp\Big( O(1) \cdot k^{-1/(\ell-1)} n^{1 + 1/\ell} \Big),$$
where in the last step we used the fact that $k \le n^{(\ell-1)^2 / \ell(2\ell-1)} / (\log n)^{2\ell-2}$. On the other hand, if $a(j) \le \frac{1}{\eps} \cdot k(j)^{-1/(\ell - 1)} n^{1 + 1/\ell}$, then
$${ k(j) n^{1+1/\ell} \choose a(j)} \, \le \, \bigg( \frac{e k(j) n^{1+1/\ell}}{a(j)} \bigg)^{a(j)} \, \le \, \bigg( \frac{n^{1+1/\ell}}{\eps a(j)} \bigg)^{\ell a(j)},$$
and hence, by Lemma~\ref{obs:optimize}, the product over the remaining $j$ is at most 
$$\bigg( \frac{C n^{1+1/\ell}}{s} \bigg)^{\ell s} \cdot \exp\Big( C k^{-1/(\ell-1)} n^{1 + 1/\ell} \Big)$$
for some $C = C(\ell)$. Noting that $\sum_{m = 1}^\infty \sum_{\k \in \K(m)} |\A(\k)| = n^{O(\log n)}$, the theorem follows.
\end{proof}

We can now easily deduce Theorem~\ref{thm:randomturan}.

\begin{proof}[Proof of Theorem~\ref{thm:randomturan}] 
Let $\ell \ge 2$ and note that, since $\ex( G, C_{2\ell} )$ is an increasing function of $E(G)$, it suffices to prove the claimed bound in the case $p \ge n^{-(\ell - 1) / (2\ell-1)} (\log n)^{2\ell}$. Given such a function $p = p(n)$, define $k = p^{-(\ell-1)/\ell}$ and (noting that $k \le n^{(\ell-1)^2 / \ell(2\ell-1)} / (\log n)^{2\ell-2}$) let $g$ and $h$ be the functions given by Theorem~\ref{thm:cycle:containers:turan:refined}. Suppose that there exists a $C_{2\ell}$-free subgraph $I \subset G(n,p)$ with $m$ edges, and observe that $g(I) \subset G(n,p)$, and that $G(n,p)$ contains at least $m - e\big( g(I) \big)$ elements of $h(g(I))$. The probability of this event is therefore at most 
\begin{align*}
\sum_{S \in \S} {k n^{1+1/\ell} \choose m - e(S)} p^m & \, \le \sum_{s = 0}^{C k^{-1/(\ell-1)} n^{1 + 1/\ell}} \bigg( \frac{C p^{1/\ell} n^{1+1/\ell}}{s} \bigg)^{\ell s} \exp\Big( C k^{-1/(\ell-1)} n^{1 + 1/\ell} \Big) \bigg( \frac{3pk n^{1+1/\ell}}{m - s} \bigg)^{m - s} \nonumber\\
& \, \le \, \exp\bigg[ O(1) \cdot \Big( p^{1/\ell} n^{1+1/\ell} + k^{-1/(\ell-1)} n^{1 + 1/\ell} \Big) \bigg] \bigg( \frac{4pk n^{1+1/\ell}}{m} \bigg)^{m/2} \to 0\label{eq:finalcounttozero}
\end{align*}
as $n \to \infty$, as long as
$$m \, \gg \, \max\Big\{ pk  n^{1+1/\ell} , \, k^{-1/(\ell-1)} n^{1 + 1/\ell} \Big\}.$$
Since one can check that these inequalities hold if 
$$p \, \ge \, n^{-(\ell-1) / (2\ell-1)} (\log n)^{2\ell} \qquad \text{and} \qquad m \, \gg \, p^{1/\ell} n^{1+1/\ell},$$ 
the theorem follows.
\end{proof}

\section{Complete bipartite graphs}\label{sec:Kst}

In this section we will prove Conjecture~\ref{conj:balancedES} for the complete bipartite graph $H = K_{s,t}$, under the  assumption that $\ex(n, K_{s,t}) = \Omega(n^{2-1/s})$, which is known to be the case when $t = t(s)$ is sufficiently large (see~\cite{ARS,BBK,Bukh,KRS}). The bound is generally believed to hold for every $2 \le s \le t$, and was conjectured already in 1954 by K\"ov\'ari, S\'os and Tur\'an~\cite{KST}. 

\begin{thm}\label{thm:Kst:ESgood}
For every $2 \le s \le t$, the graph $K_{s,t}$ is Erd\H{o}s-Simonovits good for $n^{2-1/s}$.
\end{thm}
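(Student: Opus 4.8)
The plan is to prove, for every $n$-vertex graph $G$ with $kn^{2-1/s}$ edges and $k\ge k_0$, the existence of an $st$-uniform hypergraph $\HH$ on $E(G)$ whose edges are copies of $K_{s,t}$ in $G$ and which satisfies $d_\HH(\sigma)\le C e(\HH)/\big(k^{(1+\eps)(|\sigma|-1)}e(G)\big)$ for all $1\le|\sigma|\le st$. As with even cycles, Theorem~\ref{thm:Kst:ESgood} itself does not mention $\ex(n,K_{s,t})$ — the lower bound $\ex(n,K_{s,t})=\Omega(n^{2-1/s})$ is needed only to deduce Conjecture~\ref{conj:refinedES} from it. Since $\HH$ will have $e(\HH)$ of order $k^{st}n^s$ and $e(G)=kn^{2-1/s}$, the target degree bound simplifies to
\[
 d_\HH(\sigma)\,\lesssim\, k^{\,st-1-(1+\eps)(|\sigma|-1)}\, n^{\,s-2+1/s}.
\]

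First I would regularize $G$: pass to a subgraph in which all degrees have order $kn^{1-1/s}$ and, crucially, every set $W$ of at most $s$ vertices satisfies $|N(W)|=O\big(k^{|W|}n^{1-|W|/s}\big)$. The first requirement is the standard almost-regular-subgraph lemma (applicable since $e(G)\ge n^{1+(1-1/s)}$), and the second is forced \emph{on average} by a first-moment bound, $\sum_{|W|=b}|N(W)|=\sum_v\binom{d(v)}{b}=O\big(k^b n^{\,1+b(1-1/s)}\big)$, which shows sets of excessive codegree are globally rare. Then I would take $\HH$ to be the hypergraph of \emph{all} copies of $K_{s,t}$ in the regularized $G$. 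A copy is an $s$-set $A$ together with a $t$-subset of $N(A)$, so the number of copies is $\sum_{|A|=s}\binom{|N(A)|}{t}$; convexity applied to $\{|N(A)|\}$ (after discarding the negligibly few $A$ with $|N(A)|<t$, using $\sum_{|A|=s}|N(A)|=\Theta(k^s n^s)$ and $k\ge k_0$) gives $e(\HH)=\Omega(k^{st}n^s)$, while $|N(A)|=O(k^s)$ gives $e(\HH)=O(k^{st}n^s)$.

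For the uniformity, consider $\sigma\subset E(G)$ of size $j$; inside any copy it spans $a$ vertices on one side and $b$ on the other, with $1\le j\le ab$, $a\le s$, $b\le t$. Extending $\sigma$ to a copy of $K_{s,t}$ amounts to choosing $s-a$ further vertices in $N(B_\sigma)$ — of size $O\big(k^{b}n^{1-b/s}\big)$ when $b\le s$, and $O(k^s)$ otherwise — and $t-b$ further vertices in $N(A)$, of size $O(k^s)$. Hence at most $O\big(k^{\,st-ab}n^{(s-a)(s-b)/s}\big)$ (respectively $O(k^{\,s(s+t-a-b)})$) copies contain $\sigma$. Since a graph on $n$ vertices with $kn^{2-1/s}$ edges has $k=O(n^{1/s})$, one checks that both bounds are $O\big(k^{\,st-1-(1+\eps)(j-1)}n^{\,s-2+1/s}\big)$ precisely when $(1+\eps)(ab-1)\le (a+b-2)s$ (plus the analogous inequality for $\sigma$ a full copy), and this holds with room to spare in every nontrivial case since $ab-1\le a(a+b-2)\le s(a+b-2)$, using $a\le s$ and $s\ge 2$. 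Choosing $\eps>0$ below the (finite) minimum slack then yields the required degree bound, and hence Erdős–Simonovits goodness with that $\eps$ and $C$ the implied constant.

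The step I expect to be the real obstacle is the refined regularization — guaranteeing the codegree bounds $|N(W)|=O(k^{|W|}n^{1-|W|/s})$ for all small $W$ while discarding only a constant fraction of $E(G)$, so that the clean count above is available. This is the $K_{s,t}$-analogue of the bulk of Section~\ref{Sec:ES}: if a direct deletion does not suffice (the difficulty being that a single set of large codegree can lie in many copies), one instead builds $\HH$ greedily one copy at a time, maintaining a `good hypergraph' invariant (no $j$-set of edges in more than $\Delta^{(j)}\asymp k^{st-1-(1+\eps)(j-1)}n^{s-2+1/s}$ chosen copies) exactly as in Proposition~\ref{prop:finding:cycles}, and at each step producing a fresh copy avoiding all saturated sets via the link bound of Lemma~\ref{lem:size_of_link} together with the count above. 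Either way the argument is far shorter than for even cycles, because the rigid structure of $K_{s,t}$ (choose $A$, then a $t$-subset of $N(A)$) renders the `balanced' and `refined' neighbourhood machinery unnecessary; I would therefore present this part in outline, referring to Sections~\ref{sec:definitions}–\ref{Sec:ES} for the analogous details.
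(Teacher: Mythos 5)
Your primary plan (regularize $G$ so that $|N(W)|=O(k^{|W|}n^{1-|W|/s})$ for every small $W$, then let $\HH$ be \emph{all} copies of $K_{s,t}$) cannot work, for the reason you yourself flag: the first-moment bound only shows that sets of excessive common degree are rare, and rarity does not help, because a single bad set can carry almost all copies of $K_{s,t}$ (if $G$ contains a $K_{s,n/2}$, one edge of it lies in $\binom{n/2}{t-1}$ copies, and no edge-deletion restores $|N(A)|=O(k^s)$ for that $s$-set $A$; note that even your $j=1$ bound is an equality, $(s-1)^2/s=s-2+1/s$, so there is no slack to absorb such irregularities). Everything therefore rests on your fallback, and there the proposal has a genuine gap. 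You propose to run the greedy construction ``exactly as in Proposition~\ref{prop:finding:cycles}'', maintaining the invariant that no $j$-set of \emph{edges} lies in more than $\Delta^{(j)}\asymp k^{st-1-(1+\eps)(j-1)}n^{s-2+1/s}$ chosen copies. This flat, single-ratio invariant is incompatible with the greedy step: when you extend a partial copy by one vertex on the $t$-side you add $s$ new edges of $G$ simultaneously, the saturated sets you must dodge are controlled by the $s$-link, and the analogue of Lemma~\ref{lem:size_of_link} with threshold ratio $\Delta^{(j)}/\Delta^{(j+1)}\asymp k^{1+\eps}$ only bounds $|L^{(s)}_\F(\cdot)|$ by $O\big(k^{s(1+\eps)}\big)$ --- which exceeds the $\Theta(k^s)$ available choices for the new vertex (the common neighbourhood of the $s$ left vertices already fixed). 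So you cannot guarantee a legal extension, and the invariant cannot be maintained.

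The paper's proof fixes exactly this by enriching the hypergraph: the edges of $\HH$ are ordered pairs $(S,T)$ recording the bipartition, saturation is defined for pairs of \emph{vertex sets} $(A,B)$ rather than for sets of edges, and the thresholds $D^{(|A|,|B|)}(k,n)=(\delta kn^{(s-1)/s})^{s-|A|}(\delta k^s)^{t-|B|}$ have consecutive ratios $\delta kn^{(s-1)/s}$ and $\delta k^s$ matching the two branching factors of the greedy construction (a new left vertex is chosen from a neighbourhood of size $\Theta(kn^{1-1/s})$, a new right vertex from a pool of size $\Omega(k^s)$). The corresponding links $X(A,B)$ and $Y(A,B)$ are then sets of \emph{vertices} of size $O(\delta kn^{1-1/s})$ and $O(\delta k^s)$, comfortably avoidable thanks to the factor $\delta$. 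The edge-set degree bound needed for Erd\H{o}s--Simonovits goodness is recovered only at the very end via $d_\HH(\sigma)=O(1)\cdot\max_{ij\ge|\sigma|}D^{(i,j)}(k,n)$, and your verification that these quantities obey the required bound whenever $(1+\eps)(ab-1)\le(a+b-2)s$ is essentially the computation the paper performs. In short: your arithmetic is correct and your instinct about where the difficulty lies is correct, but the construction as you describe it would stall; the two-parameter bookkeeping by sides of the bipartition is the missing idea.
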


Combining Theorem~\ref{thm:Kst:ESgood} with Proposition~\ref{prop:generalH}, we obtain a second proof\footnote{The proof of Corollary~\ref{cor:Kst} by Balogh and Samotij~\cite{BSmm,BSst} played an important role in the development of the hypergraph container method in~\cite{BMS}, so it is perhaps unsurprising that the method of this paper can be applied to $K_{s,t}$-free graphs. Nevertheless, the proof in~\cite{BSmm,BSst} is somewhat different to that presented here.} of the following breakthrough result of Balogh and Samotij~\cite{BSmm,BSst}.

\begin{cor}\label{cor:Kst}
For every $2 \le s \le t$, there are $2^{O(n^{2-1/s})}$ $K_{s,t}$-free graphs on $n$ vertices.
\end{cor}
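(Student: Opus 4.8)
The corollary is immediate from Theorem~\ref{thm:Kst:ESgood} together with Proposition~\ref{prop:generalH} (the general form of Proposition~\ref{prop:conj:implies:thm}): since $K_{s,t}$ is Erd\H{o}s--Simonovits good for $m(n)=n^{2-1/s}$, Proposition~\ref{prop:generalH} gives at most $2^{O(n^{2-1/s})}$ $K_{s,t}$-free graphs on $n$ vertices. Note that neither this deduction nor Theorem~\ref{thm:Kst:ESgood} itself needs the lower bound $\ex(n,K_{s,t})=\Omega(n^{2-1/s})$; that hypothesis enters only when one wishes to identify ``good for $n^{2-1/s}$'' with ``good for $\ex(n,K_{s,t})$'', i.e.\ with Conjecture~\ref{conj:refinedES}. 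So the real content is Theorem~\ref{thm:Kst:ESgood}, and the plan is to prove it by mirroring the proof of Theorem~\ref{thm:cycle:hypergraph} in Section~\ref{Sec:ES}, building the required $st$-uniform hypergraph $\HH$ on $E(G)$ one copy of $K_{s,t}$ at a time; the argument is considerably simpler in this setting because $K_{s,t}$ has only two ``levels''.

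The setup runs parallel to Sections~\ref{sec:definitions}--\ref{Sec:ES}. Fix constants $0<\delta\ll\eps\ll 1\ll C_0$ (with $\eps$ a fixed but small positive number, to be chosen so that the thresholds below dominate the codegrees of a ``random-like'' graph), let $G$ have $n$ vertices and $m:=k\,n^{2-1/s}$ edges with $k\ge k_0(\delta)$, and for $1\le j\le st$ put $\Delta^{(j)}(k,n)=C_0\delta k^{st}n^s\big/\big(k^{(1+\eps)(j-1)}m\big)$. Call an $st$-uniform hypergraph $\HH$ on vertex set $E(G)$, each of whose edges is the edge set of a copy of $K_{s,t}$ in $G$, \emph{good} if $d_\HH(\sigma)\le\Delta^{(|\sigma|)}(k,n)$ whenever $1\le|\sigma|\le st-1$, and define \emph{saturated} sets and the links $L^{(j)}_\F(S)$ verbatim as in Section~\ref{sec:definitions}. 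Since here consecutive thresholds drop by a constant factor $k^{1+\eps}$, the proof of Lemma~\ref{lem:size_of_link} yields $\big|L^{(j)}_\F(S)\big|\le 2^{O(st+|S|)}k^{(1+\eps)j}$ for good $\HH$, which is the only property of $\F(\HH)$ we shall use. Exactly as in Section~\ref{ESconj:proofSec}, Theorem~\ref{thm:Kst:ESgood} --- with this $\eps$ and $C=2C_0$, noting that the codegree of a full copy is trivially at most $1$ --- follows once we prove the analogue of Proposition~\ref{prop:finding:cycles}: \emph{if $\HH$ is good and $e(\HH)<\delta k^{st}n^s$, then $G$ contains a copy $K$ of $K_{s,t}$ with $K\notin\HH$ and $\HH\cup\{K\}$ good.}

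To prove this I would argue as in Section~\ref{Sec:ES}. First, exactly as around~\eqref{eq:Delta1_bounded}, pass to a subgraph of $G$ keeping at least $\tfrac34 m$ edges on which every single edge $e$ satisfies $d_\HH(e)\le 4st\,\delta\,k^{st-1}n^{s-2+1/s}$ --- so no single edge is saturated and every member of $\F(\HH)$ has at least two edges --- then delete vertices of degree below $\tfrac12kn^{1-1/s}$, and fix a vertex $a_1$ of degree $O(kn^{1-1/s})$ lying in $\Omega(k^{st}n^{s-1})$ copies of $K_{s,t}$ (such a vertex exists by a standard K\"ov\'ari--S\'os--Tur\'an convexity count). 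The core of the argument --- playing the role of Lemmas~\ref{prop:RNF:exists} and~\ref{lem:balanced_to_refined} --- is to produce around $a_1$ a \emph{balanced, refined} structure: a large, $\F(\HH)$-avoiding family $\Q$ of ``partial $K_{s,t}$'s rooted at $a_1$'' together with sets $\A$ whose sizes, and whose codegree profiles, are controlled (so that the common neighbourhoods of the vertices involved are close to their expected values, and in particular \emph{not too large}). One then forms a family $\C$ of copies of $K_{s,t}$ through $a_1$, each obtained by combining an ``initial piece'' grown one edge at a time (at each step only finitely many extensions are forbidden by $\F$, since only one new edge is created, and by balancedness there is ample room) with a completion drawn from $\Q$ that avoids $L_\F$ of the initial piece; as in the $C_{2\ell}$ argument, forbidding each new edge from completing a member of $\F$ ensures the finished copy avoids $\F$ entirely. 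The number of ``bad'' completions is controlled by a global counting argument of exactly the flavour of Claims~\ref{claim:m_of_j}--\ref{claim:D_is_small} in Section~\ref{sec:finding:cycles}: if too many of the copies in $\C$ required a saturated set, then some single edge of $G$ would be forced into more than $\Delta^{(1)}$ copies, contradicting goodness. One thereby gets $|\C|=\Omega(k^{st}n^{s-1})$, and since all copies in $\C$ use one of the at most $O(kn^{1-1/s})$ edges of $G$ at $a_1$, and $\HH$ contains only $O(\delta k^{st}n^{s-1})$ copies through such edges, the pigeonhole principle (as in~\eqref{eq:size_of_C}) produces a copy in $\C$ not already in $\HH$, which is then safe to add.

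The main obstacle is precisely the construction of the balanced, refined structure $\Q$ around $a_1$ together with the global count bounding the bad completions --- i.e.\ the analogues of the delicate arguments in Sections~\ref{sec:balanced}--\ref{sec:finding:cycles}. The crucial point, as in Section~\ref{Sec:ES}, is the upper control on the relevant common neighbourhoods: one must prune to a ``uniform'' sub-structure before one can afford to avoid $\F$, because if a common neighbourhood were much larger than expected then too many of its sub-stars could be saturated and block the final extension step. This difficulty is genuinely milder here, since $K_{s,t}$ has only two levels, so there is no need for a ``choose the depth $t$ minimal'' case analysis and hence no analogue of Lemma~\ref{lem:paths_to_concentrated_nbd}. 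A secondary point requiring care, exactly as for $C_{2\ell}$, is checking the uniform thresholds $\Delta^{(j)}$ against the densest sub-bipartite graphs $K_{a,b}\subseteq K_{s,t}$ (which carry $ab$ edges on only $a+b$ vertices): this is where one uses that $k=O(n^{1/s})$ throughout and fixes $\eps$ small.
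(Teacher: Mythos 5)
Your first paragraph is exactly the paper's proof of the corollary: the paper derives it by combining Theorem~\ref{thm:Kst:ESgood} with Proposition~\ref{prop:generalH} (and, equivalently, notes that it also follows from Theorem~\ref{thm:Kst:containers:turan}), and you are right that the lower bound $\ex(n,K_{s,t})=\Omega(n^{2-1/s})$ plays no role in this deduction --- that hypothesis is only needed to identify ``good for $n^{2-1/s}$'' with Conjecture~\ref{conj:refinedES}. So at the level of the stated corollary the proposal is correct and matches the paper.

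Where you diverge is in the sketch of Theorem~\ref{thm:Kst:ESgood} itself. You propose to transplant the full machinery of Section~\ref{Sec:ES} --- balanced and refined neighbourhoods around a root vertex, an initial piece grown edge by edge, a completion family $\Q$, and analogues of Claims~\ref{claim:m_of_j}--\ref{claim:D_is_small} --- and you explicitly flag the construction of the balanced/refined structure as the unresolved ``main obstacle''. The paper's actual argument (Theorem~\ref{thm:Kst:hypergraph} and Proposition~\ref{prop:finding:kst}) sidesteps all of this by enriching the hypergraph to remember the bipartition: edges are ordered pairs $(S,T)$, and the degree thresholds $D^{(i,j)}(k,n)$ are indexed by the sizes of \emph{vertex} sets $(A,B)$ rather than by the number of edges in $\sigma$. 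With that bookkeeping, the construction is a plain two-stage greedy procedure: for each $v$, build $S\subset N_G(v)$ one vertex at a time avoiding the link $X(\cdot,\{v\})$, then extend to $T$ one vertex at a time avoiding $Y(S,\cdot)$, with convexity supplying the count $\Omega(k^{st}n^s)$. No pruning to a uniform substructure, no analogue of Lemma~\ref{lem:paths_to_concentrated_nbd}, and no counterpart of the $\D$-counting claims is needed, precisely because adding a single vertex creates only one new pair $(A',B')$ per level to protect against saturation. The translation back to edge-set codegrees, $d_\HH(\sigma)=O(1)\cdot\max_{ij\ge|\sigma|}D^{(i,j)}(k,n)$, is then where the exponent check you describe (using $k=O(n^{1/s})$ and $\eps$ small) takes place, and your verification of that check is sound. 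In short: your deduction of the corollary is the paper's; your plan for the supporting supersaturation theorem is workable in outline but harder than necessary, and the part you identify as the main difficulty is exactly the part the paper's enriched-hypergraph formulation dissolves.
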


Moreover, repeating the argument of Sections~\ref{sec:proof} and~\ref{sec:Turan}, we also obtain the following bounds for the Tur\'an problem on $G(n,p)$, which are likely to be close to best possible.

\begin{thm}\label{thm:Kst:Turan}
For every $2 \le s \le t$, there exists a constant $C  = C(s,t) > 0$ such that 
$$\ex \big( G(n,p), K_{s,t} \big) \, \le \,  \left\{
\begin{array} {c@{\quad}l} 
C n^{2 - (s+t-2) /(st-1)} (\log n)^2 & \textup{if } \; p \le n^{-(s - 1) / (st-1)} (\log n)^{2s/(s-1)} \\[+1ex]
C p^{(s-1)/s} n^{2-1/s} & \textup{otherwise}
\end{array}\right.$$
with high probability as $n \to \infty$. 
\end{thm}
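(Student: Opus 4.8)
The plan is to run the argument of Sections~\ref{sec:proof} and~\ref{sec:Turan} essentially verbatim, with the refined supersaturation theorem for even cycles (Theorem~\ref{thm:cycle:hypergraph}) replaced throughout by its analogue for $K_{s,t}$. That analogue is Theorem~\ref{thm:Kst:ESgood} in the strengthened form actually proved in this section, which --- just as Theorem~\ref{thm:cycle:hypergraph} strengthens Conjecture~\ref{conj:refinedES} for $C_{2\ell}$ --- provides, for every graph $G$ with $n$ vertices and $kn^{2-1/s}$ edges, an $st$-uniform hypergraph $\HH$ on $E(G)$ whose edges are copies of $K_{s,t}$, satisfying both the degree bounds of Definition~\ref{def:ESgood} with improvement exponent $\eps = s-1$ (so that the bound on $d_\HH(\sigma)$ improves by a factor $k^s$ each time $|\sigma|$ grows by one) and the matching lower bound $e(\HH) = \Omega(k^{st}n^s)$ on the number of copies captured. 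This last bound, which matches the count of $K_{s,t}$'s in $G(n,kn^{2-1/s})$, is where the hypothesis $\ex(n,K_{s,t}) = \Omega(n^{2-1/s})$ enters. These are exactly the inputs consumed by the container method.

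First I would establish the $K_{s,t}$-analogue of Proposition~\ref{prop:refined_containers_for_graph}. Exactly as in the proofs of Propositions~\ref{prop:containers_for_graphs} and~\ref{prop:generalH}, one feeds $\HH$ into Theorem~\ref{thm:containers:turan} with $1/\tau = \delta^2 k^{1+\eps} = \delta^2 k^{s}$ when $k$ is large, and with $1/\tau$ of order $k\, n^{(s-1)^2/(s(st-1))}$ when $k$ is small (the latter being the threshold at which the contribution of the top level $j = st$, where $d_\HH(\sigma)$ is bounded trivially by $1$, becomes negligible after division by $e(\HH) = \Omega(k^{st}n^s)$); the inequality $\delta(\HH,\tau) \le \delta$ then follows from the same telescoping computation. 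This yields, for $G$ with $kn^{2-1/s}$ edges, a family of subgraphs with at most $(1-c)e(G)$ edges together with a fingerprint function defined on edge-sets of size at most $\mu n^{2-1/s}$, where
$$\mu \;=\; \frac{1}{c}\,\max\big\{\, k^{-(s-1)},\; n^{-(s-1)^2/(s(st-1))} \,\big\}$$
for an absolute constant $c > 0$.

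Next I would iterate this exactly as in the proof of Theorem~\ref{thm:cycle:containers:turan:refined}: starting from $K_n$ (so the initial density parameter is $\approx n^{1/s}$), repeatedly peel off a fingerprint and pass to a container until the current graph has at most $kn^{2-1/s}$ edges, which takes $O(\log n)$ steps; the analogue of Observation~\ref{obs:optimize} then bounds the number of resulting coloured graphs with $s$ edges by $(Cn^{2-1/s}/s)^{O(s)} \exp\big(C k^{-(s-1)} n^{2-1/s}\big)$, valid whenever $k$ is at most roughly $n^{(s-1)/(s(st-1))}$ (up to a polylogarithmic factor), this constraint being exactly what forces the $k^{-(s-1)}$ term to dominate $\mu$ throughout the iteration. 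This produces functions $g \colon \I \to \S$ and $h \colon \S \to \G$ with $g(I) \subset I \subset g(I) \cup h(g(I))$ --- the $K_{s,t}$-version of Theorem~\ref{thm:cycle:containers:turan:refined}. Finally, exactly as Theorem~\ref{thm:randomturan} is deduced from that structural result: if $I \subset G(n,p)$ is $K_{s,t}$-free with $m$ edges then $g(I) \subset G(n,p)$ and $G(n,p)$ contains at least $m - e(g(I))$ edges of $h(g(I))$, so a union bound over $\S$ together with the binomial estimates has probability $\ll 1$ provided $p = k^{-s}$ and $m \gg \max\big\{ pk\, n^{2-1/s},\, k^{-(s-1)} n^{2-1/s} \big\}$. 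Since $p = k^{-s}$ makes both terms of the maximum equal to $p^{(s-1)/s} n^{2-1/s}$, this is the second bound; and since the constraint $k \lesssim n^{(s-1)/(s(st-1))}$ translates via $k = p^{-1/s}$ into $p \gtrsim n^{-(s-1)/(st-1)}$, it is valid above the stated threshold. The first bound then follows from monotonicity of $\ex(G(n,p),K_{s,t})$ in $p$: substituting the threshold value of $p$ gives $\ex(G(n,p),K_{s,t}) \le C n^{2 - (s+t-2)/(st-1)}(\log n)^2$, and this persists for all smaller $p$.

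The hard part is none of the above --- the container bookkeeping is a genuine repetition of Sections~\ref{sec:proof}--\ref{sec:Turan} --- but rather the strengthened form of Theorem~\ref{thm:Kst:ESgood}: building the hypergraph $\HH$ with the matching lower bound $e(\HH) = \Omega(k^{st}n^s)$ while retaining the degree bounds with the correct exponent $\eps = s-1$. This requires a refined-supersaturation argument in the spirit of Section~\ref{Sec:ES}, selecting copies of $K_{s,t}$ one at a time so as to avoid edge-sets that are already at their maximal degree. It should nonetheless be appreciably simpler than the even-cycle case, since $K_{s,t}$ has no analogue of the ``zig-zag'' path: the relevant ``neighbourhoods'' are simply common neighbourhoods of $s$-sets of vertices, which can be counted directly from the hypothesis $\ex(n,K_{s,t}) = \Omega(n^{2-1/s})$ together with convexity.
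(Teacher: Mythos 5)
Your overall route is the paper's: prove a refined supersaturation theorem for $K_{s,t}$, feed it into Theorem~\ref{thm:containers:turan}, iterate to obtain the fingerprint/container structure of Theorem~\ref{thm:cycle:containers:turan:refined}, and finish with the union bound of Section~\ref{sec:Turan}. Your numerology is also correct: $\mu \asymp \max\{k^{-(s-1)}, n^{-(s-1)^2/(s(st-1))}\}$, the crossover at $k \approx n^{(s-1)/(s(st-1))}$, the choice $p = k^{-s}$ equalising the two terms at $p^{(s-1)/s}n^{2-1/s}$, and the first regime by monotonicity. (One transposition: $1/\tau = \delta^2 k^{s}$ is the right choice for $k$ \emph{below} the crossover, and $1/\tau \asymp k\, n^{(s-1)^2/(s(st-1))}$ for $k$ \emph{above} it, not the other way around --- for small $k$ the intermediate co-degree levels are binding, for large $k$ the top level is.)

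The genuine gap is in the supersaturation input you invoke. A hypergraph with $e(\HH) = \Omega(k^{st}n^s)$ cannot satisfy Definition~\ref{def:ESgood} with $\eps = s-1$ once $k \gg n^{(s-1)/(s(st-1))}$: already for $\sigma$ a full copy of $K_{s,t}$ the claimed bound $C e(\HH)/\big(k^{s(st-1)}e(G)\big) \asymp k^{-(s-1)(st-1)}n^{(s-1)^2/s}$ drops below $1$, while the true degree is $1$; and for $s \ge 3$ the same failure occurs at intermediate levels, e.g.\ for $\sigma$ spanning a copy of $K_{i,t}$ with $2 \le i \le s-1$ the true degree can be as large as $\big(\delta k n^{(s-1)/s}\big)^{s-i}$, which likewise exceeds the uniform bound in that range of $k$ and is not rescued by the trivial bound $d_\HH(\sigma)\le 1$. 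Since the container iteration starts from $K_n$, i.e.\ $k \approx n^{1/s}$, the degree bounds you feed into the co-degree computation are false exactly where you first need them. This is why Section~\ref{sec:Kst} proves the \emph{two-parameter} bounds $d_\HH(A,B) \le D^{(|A|,|B|)}(k,n)$ of Theorem~\ref{thm:Kst:hypergraph}, which record how $\sigma$ sits inside the bipartition; the resulting estimate $d_\HH(\sigma) = O(1)\max_{ij \ge |\sigma|}D^{(i,j)}(k,n)$ is what makes $\delta(\HH,\tau)\le\delta$ verifiable at every level for large $k$. Your final value of $\tau$ happens to survive because, above the crossover, the top level $a = st$ dominates the levels $a = it$ with $i < s$, but the uniform ``gain $k^{s}$ per edge'' formulation cannot be the statement you prove; the paper warns explicitly that such a weaker supersaturation statement suffices for Corollary~\ref{cor:Kst} but not for Theorem~\ref{thm:Kst:Turan}.
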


Using the construction described in Section~\ref{sec:outline} (taking a blow-up and intersecting it with $G(n,p)$), one can easily show that, for each $2 \le s \le t$ such that $\ex(n, K_{s,t}) = \Omega(n^{2-1/s})$, we have $\ex \big( G(n,p), K_{s,t} \big) = \Omega\big( p^{(s-1)/s} n^{2-1/s}  \big)$ with high probability as $n \to \infty$. Moreover, another standard construction (remove one edge from each copy of $K_{s,t}$) shows that $\ex \big( G(n,p), K_{s,t} \big) = \Omega\big( n^{2 - (s+t-2) /(st-1)} \big)$ for every $p \ge n^{-(s+t-2)/(st-1)}$. We remark also that somewhat weaker upper bounds were obtained in~\cite{BSst}.

Let us fix $2 \le s \le t$. In order to prove Theorem~\ref{thm:Kst:ESgood} and Corollary~\ref{cor:Kst}, it is enough to prove a relatively weak balanced supersaturation theorem; however, to obtain the bounds in Theorem~\ref{thm:Kst:Turan} we need close to best possible bounds on $d_\HH(\sigma)$ for every $\sigma \subset E(G)$ with $\sigma \subset K_{s,t}$, where $G$ is a graph with $n$ vertices and $k n^{2 - 1/s}$ edges. In order to prove such a theorem, it will be useful to enrich the collection~$\HH$ (of Conjecture~\ref{conj:balancedES}) slightly, by recording the vertex partition of each copy of~$K_{s,t}$. In this section, therefore, a collection $\HH$ of copies of~$K_{s,t}$ in a graph~$G$ will be a collection of ordered pairs $(S,T)$ with $S \in V(G)^{(s)}$, $T \in V(G)^{(t)}$, and with $G[S,T]$ a complete bipartite graph.

When $\sigma \subset E(G)$ is an unlabelled set of edges, then $d_\HH(\sigma)$ retains the usual definition of the number of copies of~$K_{s,t}$ whose edge set contains~$\sigma$. However, in the proof below we will also need to define $d_\HH(A,B)$, where $A, B \subset V(G)$ with $1 \le |A| \le s$, $1 \le |B| \le t$ and $G[A,B]$ is a complete bipartite graph, to be the number of members of~$\HH$ for which the left vertex class contains~$A$ and the right vertex class contains~$B$, that is,
\[
 d_\HH(A,B) \, = \, \big| \big\{ (S,T) \in \HH : A \subset S \mbox{ and } B \subset T \big\} \big|.
\]
Moreover, for each $1 \le i \le s$ and $1 \le j \le t$, define
$$D^{(i,j)}(k,n) \, = \, \big( \delta k n^{(s-1)/s} \big)^{s-i} (\delta k^s)^{t-j},$$
for some sufficiently small constant $\delta > 0$. 

We will prove the following balanced supersaturation theorem.

\begin{thm}\label{thm:Kst:hypergraph}
For every $2 \le s \le t$, there exist constants $\delta > 0$ and $k_0 \in \N$ such that the following holds for every $k \ge k_0$ and every $n \in \N$. Given a graph~$G$ with~$n$ vertices and $k n^{2-1/s}$ edges, there exists a collection~$\HH$ of copies of~$K_{s,t}$ in~$G$, satisfying:
\begin{itemize}
 \item[$(a)$] $|\HH| = \Omega\big( k^{st} n^s \big)$, and\smallskip
 \item[$(b)$] $d_\HH(A,B) \le D^{(|A|,|B|)}(k,n)$ for every $A, B \subset V(G)$.
\end{itemize}
\end{thm}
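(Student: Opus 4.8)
The plan is to mirror the proof of Theorem~\ref{thm:cycle:hypergraph}: build the enriched hypergraph $\HH$ one copy of $K_{s,t}$ at a time, maintaining at every stage the property that $d_\HH(A,B) \le D^{(|A|,|B|)}(k,n)$ for all relevant pairs (call such an $\HH$ \emph{good}), and show that whenever $\HH$ is good with $e(\HH) < \delta k^{st}n^s$ a further copy of $K_{s,t}$ can be added while preserving goodness. Iterating until $e(\HH) \ge \delta k^{st}n^s$ then gives part~$(a)$, and goodness throughout gives part~$(b)$, so the entire content is an analogue of Proposition~\ref{prop:finding:cycles}. The deduction of Theorem~\ref{thm:Kst:hypergraph} from this proposition, and the verification that $D^{(i,j)}(k,n)$ is the correct (and essentially best possible) codegree profile --- it matches that of the Erd\H{o}s--R\'enyi random graph of the same edge density --- are routine, exactly as in Section~\ref{ESconj:proofSec}.

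First I would clean up $G$ as in Section~\ref{sec:balanced}: pass to a subgraph, losing only a constant factor in the edge count, with minimum degree $\ge c_1 k n^{1-1/s}$ and maximum degree $\le C_1 k n^{1-1/s}$ --- the latter because at most $O(n^{1-1/s}) \ll k n^{1-1/s}$ vertices have larger degree --- and also assume, by averaging against $e(\HH) < \delta k^{st}n^s$, that every edge of $G$ lies in few edges of $\HH$ (the analogue of~\eqref{eq:Delta1_bounded}). The supersaturation count behind part~$(a)$ is then the classical K\H{o}v\'ari--S\'os--Tur\'an convexity estimate: since $\sum_{S \in V(G)^{(s)}} |N(S)| = \sum_v \binom{d(v)}{s} = \Theta(k^s n^s)$ by convexity and the degree bounds, the average of $|N(S)|$ is $\Theta(k^s)$, and a second application of convexity gives $\sum_S \binom{|N(S)|}{t} = \Omega(k^{st}n^s)$; crucially, the bounded-degree cleanup lets this estimate survive after $S$ is restricted to a suitably `concentrated' family.

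To add one copy $(S,T)$ while staying good, I would pick its vertices greedily, keeping every sub-pair away from $\F(\HH) = \{(A,B): d_\HH(A,B) \ge \lfloor D^{(|A|,|B|)}(k,n)\rfloor\}$. One builds $S = \{a_1,\dots,a_s\}$ by choosing each $a_{i+1}$ in the current common neighbourhood $W_i = N(a_1) \cap \cdots \cap N(a_i)$ so that $|N(a_{i+1}) \cap W_i|$ stays close to what it `should' be --- a \emph{concentrated} extension, in the spirit of Definition~\ref{defn:concentrated_nbd} --- and then builds $T = \{b_1,\dots,b_t\} \subseteq W_s = N(S)$ vertex by vertex, at each step discarding vertices that would create a saturated pair. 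By an analogue of Lemma~\ref{lem:size_of_link} the number of choices excluded at each step is $O(\delta)$ times the relevant power of $k$ and $n$, negligible against the $\Omega(\eps(\cdot))$ times that power of available choices since $\delta \ll \eps(\cdot)$. So the process runs to completion and --- the supersaturation count being robust to the few excluded choices --- produces $\Omega(k^{st}n^s) > e(\HH)$ copies of $K_{s,t}$, one of which is therefore not already an edge of $\HH$; adding it preserves goodness because each of its sub-pairs had degree below $\lfloor D^{(|A|,|B|)}\rfloor$ beforehand.

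The hard part, exactly as in Section~\ref{Sec:ES}, is guaranteeing enough admissible copies when $k$ is much smaller than $n^{1/s}$: a crude first-moment argument would suggest that copies passing through very dense `degenerate' substructures of $G$ (for instance a $K_{s,n}$) dominate the count, yet those cannot all be used without over-filling the pairs $(A,B)$ with $|A| = s$; and a single `balanced' threshold $|N(S)| \le Ck^s$ is in tension with itself, since a small $C$ leaves too few concentrated copies for the supersaturation count while a large $C$ forces too many vertices to be excluded when avoiding saturated pairs. The resolution is the device used for even cycles: introduce a three-tier hierarchy of \emph{concentrated}, \emph{balanced} and \emph{refined} $s$-tuples, with constants $\delta \ll \eps(1) \ll \cdots \ll \eps(s) \ll 1$ interleaving the thresholds; fix the level at which the common neighbourhood first becomes concentrated to be \emph{as small as possible}; and use that minimality, together with the cleanup and the link bound, to control how many tuples and copies are lost at each refinement --- the analogues of Lemmas~\ref{prop:RNF:exists} and~\ref{lem:balanced_to_refined}. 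Making this precise and carrying out the bookkeeping is the technical heart of the proof; everything else is a direct transcription of Sections~\ref{sec:definitions}--\ref{ESconj:proofSec}.
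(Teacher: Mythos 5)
Your first three paragraphs essentially reproduce the paper's argument: build $\HH$ one copy of $K_{s,t}$ at a time via an analogue of Proposition~\ref{prop:finding:cycles}, choose the vertices of $S$ and then of $T$ greedily while excluding the link sets of the already-chosen vertices (whose sizes are $O(\delta k n^{1-1/s})$ and $O(\delta k^s)$ respectively, by the analogue of Lemma~\ref{lem:size_of_link}), and get the $\Omega(k^{st}n^s)$ count from K\H{o}v\'ari--S\'os--Tur\'an convexity. Two minor quibbles there: the paper does not need a maximum-degree cleanup (and your bound on the number of high-degree vertices is off --- it is $O(n)$, not $O(n^{1-1/s})$); it only deletes the $o(e(G))$ edges whose corresponding pairs $(\{u\},\{v\})$ are already saturated, and then runs the convexity estimate over all vertices, with the vertices of degree $\gg \delta k n^{1-1/s}$ supplying the required $\Omega(k^s n^s)$ pairs $(S,v)$.

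The genuine problem is your final paragraph. You assert that the proof additionally requires a concentrated/balanced/refined hierarchy of $s$-tuples with a minimality argument, call this the ``technical heart'', and then do not carry it out --- so as written the proposal leaves its own hardest step unproven. But this machinery is not needed for $K_{s,t}$, and the tension you describe is illusory. There is no upper-bound threshold on $|N(S)|$ anywhere in the argument: the paper works with $\M(S)$, the set of $v$ for which $(S,\{v\})$ is good, and a large common neighbourhood only helps the count. The number of vertices excluded at each step of the greedy choice of $T$ is $|Y(S,\cdot)| = O(\delta k^s)$, which is controlled by the goodness of $\HH$ alone and does not grow with $|N(S)|$; so any $S$ with $|\M(S)| = \Omega(k^s)$ yields $\Omega(|\M(S)|^t)$ good copies, and convexity over $S$ finishes the count. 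Copies through degenerate substructures are simply capped (each pair $(S,\{v\})$ stops being used once it reaches degree $D^{(s,1)}$) and are never needed for the lower bound. The reason $C_{2\ell}$ requires the concentrated/balanced/refined apparatus is that its copies are built from paths of length up to $\ell$, which can concentrate at intermediate levels in ways that only the minimality of $t(x)$ controls; for $K_{s,t}$ the construction has depth two (a common neighbourhood, then a common neighbourhood of $s$ vertices), and convexity plus the link bounds already close the argument. In short: delete your fourth paragraph, and instead make the convexity bookkeeping of your second and third paragraphs precise --- that \emph{is} the whole proof.
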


Let us fix $2 \le s \le t$ and constants $\delta > 0$ (small) and $k_0 \in \N$ (large), and let $G$ be a graph $G$  with~$n$ vertices and $k n^{2-1/s}$ edges, where $k \ge k_0$. We will say that a pair $(A,B)$ of disjoint vertex sets with $1 \le |A| \le s$ and $1 \le |B| \le t$ is \emph{saturated} if 
$$d_\HH(A,B) \ge \lfloor D^{(|A|,|B|)}(k,n) \rfloor,$$ 
and that $(A,B)$ is \emph{good} if it contains no saturated pair $(A', B')$ with $A' \subset A$ and $B' \subset B$. We say that~$\HH$ is \emph{good} if every $(S,T) \in \HH$ is good. The main step in the proof of Theorem~\ref{thm:Kst:hypergraph} is the following proposition, cf.~Proposition~\ref{prop:finding:cycles}. 

\begin{prop}\label{prop:finding:kst}
Let $\HH$ be a good collection of copies of~$K_{s,t}$ in~$G$, with $e(\HH) = o\big( k^{st} n^s \big)$. There exists a copy $(S,T)$ of $K_{s,t}$, with $(S,T) \not\in \HH$, such that $\HH \cup \{(S,T)\}$ is good.
\end{prop}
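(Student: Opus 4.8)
The plan is to follow the proof of Proposition~\ref{prop:finding:cycles}, in the setting where the forbidden graph $K_{s,t}$ is complete bipartite. As there, I would begin by passing to a subgraph of $G$ of minimum degree at least $c\,kn^{1-1/s}$ (losing at most half of the edges) and, by discarding a few more edges, arranging that $d_\HH(\{u\},\{v\})$ is at most a tiny fraction of $D^{(1,1)}(k,n)$ for every $uv\in E(G)$; this is possible because $e(\HH)=o(k^{st}n^s)$. The gain from this reduction is that it then suffices to exhibit a single copy $(S,T)$ of $K_{s,t}$ in $G$ that is \emph{good}, i.e.\ contains no saturated pair: such a copy is automatically not already an edge of $\HH$ (otherwise the pair $(S,T)$ itself would be saturated, since $D^{(s,t)}(k,n)=1$), and adding it to $\HH$ leaves $d_\HH(A,B)\le D^{(|A|,|B|)}(k,n)$ for every pair $(A,B)$.

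The existence of a good copy will follow from two facts. The first is that $G$ contains at least $c'k^{st}n^s$ copies of $K_{s,t}$; this is a routine K\"ov\'ari--S\'os--Tur\'an-type convexity estimate, using only $e(G)=kn^{2-1/s}$, $k\ge k_0$ and the minimum-degree bound. The second, and the heart of the matter, is that one can extract from these a sub-family of at least $\delta^{O(1)}k^{st}n^s$ copies that are \emph{balanced}, in the sense that for every $1\le i\le s$ and $1\le j\le t$, at most $D^{(i,j)}(k,n)$ balanced copies pass through any fixed pair $(A,B)$ with $|A|=i$, $|B|=j$ and $G[A,B]=K_{i,j}$. This is the $K_{s,t}$-analogue of the concentrated/balanced/refined neighbourhood machinery of Section~\ref{Sec:ES} (Definitions~\ref{def:balanced:nbhd} and~\ref{def:refined:nbhd}): one builds the copies greedily, choosing the left vertices $x_1,\dots,x_s$ and then the right vertices $y_1,\dots,y_t$ from $N(\{x_1,\dots,x_s\})$ one at a time, while keeping the sizes of the intermediate common neighbourhoods inside carefully chosen dyadic ranges, the ``balancedness'' conditions being exactly what makes the extensions of any partial structure within the family suitably few. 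Since a balanced copy lying in $\HH$, or containing a saturated proper sub-pair of type $(i,j)$, is counted (with multiplicity at most $D^{(i,j)}(k,n)$) against some saturated pair, and since the number of saturated pairs of type $(i,j)$ is at most $O\big(e(\HH)/D^{(i,j)}(k,n)\big)$ by the handshaking argument of Lemma~\ref{lem:size_of_link} (see~\eqref{eq:Deltaratio}), the number of ``bad'' balanced copies is at most $O(e(\HH))=o(k^{st}n^s)$. Hence a good balanced copy outside $\HH$ survives, which is what we want.

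The main obstacle I anticipate is precisely the construction of the balanced sub-family, which carries essentially all of the difficulty of Section~\ref{Sec:ES}. One has to choose the dyadic ranges for the intermediate common neighbourhoods so that the greedy process keeps enough admissible choices at \emph{every} step to produce $\delta^{O(1)}k^{st}n^s$ copies --- this is most delicate when $k\ll n^{1/s}$, where these common neighbourhoods are much smaller than $V(G)$ and a naive count is too lossy --- while simultaneously making the ranges restrictive enough (by factors that are powers of $\delta$, playing the role of the constants $\eps(t)$ in the cycle case) that no pair of type $(i,j)$ is overloaded beyond $D^{(i,j)}(k,n)$. As in the proof of Proposition~\ref{prop:finding:cycles}, the hypothesis $e(\HH)=o(k^{st}n^s)$ is exactly what makes the resulting slack affordable, and some of the work will go into the bookkeeping that keeps the number of ``legal'' choices for the later vertices under control given the commitments already made.
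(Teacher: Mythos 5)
Your framing of the problem is right (the preliminary reductions, the observation that a good copy is automatically not in $\HH$ since $D^{(s,t)}(k,n)=1$, and the charging argument that bounds the number of copies containing a saturated pair by $O(e(\HH))$ are all correct), but the step carrying all the weight --- the extraction of a ``balanced'' sub-family of $\delta^{O(1)}k^{st}n^s$ copies of $K_{s,t}$ in which every pair of type $(i,j)$ has multiplicity at most $D^{(i,j)}(k,n)$ --- is left entirely unconstructed, and it is not a detail. Indeed, such a family is precisely a hypergraph satisfying conditions $(a)$ and $(b)$ of Theorem~\ref{thm:Kst:hypergraph}; if you could build it in one shot, Proposition~\ref{prop:finding:kst} and the whole edge-by-edge process would be superfluous. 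You have therefore reduced the easy incremental step to the hard global statement it was designed to avoid, and the construction you gesture at (dyadic ranges on intermediate common neighbourhoods, an analogue of the concentrated/balanced/refined machinery of Section~\ref{Sec:ES}) is exactly the part that would need to be done and is not done. This is a genuine gap.

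Moreover, you have misjudged where the difficulty lies: for $K_{s,t}$ none of the Section~\ref{Sec:ES} machinery is needed, because once the left class $S$ is fixed the entire right class lives in the common neighbourhood of $S$ --- there is no analogue of the ``return path'' problem that makes the cycle case hard. The paper's argument is a direct greedy avoidance: define the link sets $X(A,B)$ and $Y(A,B)$ of vertices whose addition would complete a saturated pair; the handshaking argument of Lemma~\ref{lem:size_of_link} gives $|X(A,B)| = O(\delta k n^{1-1/s})$ and $|Y(A,B)| = O(\delta k^s)$. Choosing $u_1,\dots,u_s \in N_G(v)$ greedily while avoiding these sets shows that $\Omega(d_G(v)^s)$ of the pairs $(S,\{v\})$ are good, so by convexity the family $\M$ of good pairs $(S,v)$ has size $\Omega(k^s n^s)$ and a typical $S$ has $|\M(S)| = \Omega(k^s)$. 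A second greedy choice of $v_1,\dots,v_t \in \M(S)$ avoiding $Y(S,\cdot)$ (an $O(\delta)$-fraction of $\M(S)$) yields $\Omega(|\M(S)|^t)$ good copies $(S,T)$, and a final convexity step gives $\Omega(k^{st}n^s)$ good copies altogether, at least one of which is not yet in $\HH$. Note how the hypothesis $e(\HH)=o(k^{st}n^s)$ is actually used: it guarantees that the links are small relative to the sets being chosen from (and that only $o(e(G))$ edges are saturated), i.e.\ that the greedy always has room --- not that one can subtract bad copies from a pre-built balanced family. If you want to salvage your route, you would have to supply the balanced-family construction in full; the far shorter path is the one above.
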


In order to deduce Theorem~\ref{thm:Kst:hypergraph} from Proposition~\ref{prop:finding:kst}, we simply build up~$\HH$ edge by edge, until it has at least $\Omega( k^{st} n^s )$ edges. 

\begin{proof}[Sketch proof of Proposition~\ref{prop:finding:kst}]
Let~$\F$ to be the collection of saturated sets,
\[
 \F = \big\{ (A,B) \,:\, \emptyset \ne A, B \subset V(G) \mbox{ and } d_\HH(A, B) = \lfloor D^{(|A|, |B|)}(k,n)\rfloor \big\}.
\]
Provided that we do not pick~$S, T \subset V(G)$ with $A \subset S$ and $B \subset T$ for some $(A,B) \in \F$, then $\HH \cup \{(S,T)\}$ will be good.
By choosing a subgraph of~$G$ if necessary, we may assume that~$\F$ does not contain $(\{u\},\{v\})$ for any $\{u, v\} \in E(G)$ (observe that, since $|\HH| = o(k^{st} n^s)$, at most $o(e(G))$ of the edges of~$G$ correspond to saturated pairs, cf.~\eqref{eq:Delta1_bounded}).

For $A, B \subset V(G)$, define (cf.~the definition of $L_\F^{(1)}$ for cycles)
\begin{align*}
 X(A,B) & = \big\{ u \in V(G) : (A' \cup \{u\}, B') \in \F \mbox{ for some non-empty } A' \subset A, B' \subset B \big\}, \\
 Y(A,B) & = \big\{ v \in V(G) : (A', B' \cup \{v\}) \in \F \mbox{ for some non-empty } A' \subset A, B' \subset B \big\}.
\end{align*}
Following the proof of Lemma~\ref{lem:size_of_link}, it can easily be checked that 
\begin{equation}
 \label{eq:size_of_link}
 |X(A,B)| = O(\delta kn^{1-1/s}) \qquad\mbox{and}\qquad |Y(A,B)| = O(\delta k^s).
\end{equation}

Let $\M$ be the collection of all pairs $(S,v)$ with $v \in V(G)$, $S \in N_G(v)^{(s)}$, and such that $(S, \{v\})$ is good. We claim that
\begin{equation}
 \label{eq:size_of_M}
 |\M| = \Omega(k^s n^s).
\end{equation}
Indeed, for each $v \in V(G)$, observe that~$\M$ contains all pairs $(S,v)$ where~$S$ is generated as follows. Select an arbitrary $u_1 \in N_G(v)$. Now for $i=2, \ldots, s$, select
\[
 u_i \in N_G(v) \setminus \Big( \{u_1, \ldots, u_{i-1}\} \cup X\big(\{u_1, \ldots, u_{i-1}\}, \{v\}\big) \Big)
\]
and set $S = \{u_1, \ldots, u_s\}$. Since we choose $u_i \not\in X\big( \{u_1, \ldots, u_{i-1}\}, \{v\} \big)$, and using our assumption that~$\F$ does not contain any saturated pairs $(\{u_i\}, \{v\})$, it follows that the pair $(\{u_1, \ldots, u_i\}, \{v\})$ is good for every~$i \in [s]$, and hence $(S, \{v\})$ is also good. By~\eqref{eq:size_of_link}, the number of choices for each~$u_i$ is at least
\[
 |N_G(v)| - \big( s + O(\delta kn^{1-1/s}) \big).
\]
Thus for~$v$ whose degree is comparable with the average degree of~$G$, that is, $d_G(v) = \Omega(kn^{1-1/s})$, we have that the total number of choices for~$S$ is $\Omega(d_G(v)^s)$. Summing over all $v \in V(G)$ and using convexity, one obtains the bound~\eqref{eq:size_of_M}.

We now claim that there are $\Omega(k^{st}n^s)$ good pairs $(S,T)$ with $G[S,T] = K_{s,t}$. From this, the proposition follows immediately, since at least one of these is not in~$\HH$. Set
$$\M(S) = \big\{ v \in V(G) : (S,v) \in \M \big\}$$
for each $S \in V(G)^{(s)}$, and consider $S \in V(G)^{(s)}$ with $|\M(S)| = \Omega(k^s)$. We claim that there are $\Omega\big(|\M(S)|^t\big)$ sets $T \in \M(S)^{(t)}$ such that $(S,T)$ is good (and, since $T \subset \M(S)$, $G[S,T]$ is a complete bipartite graph). Indeed, for $i=1, \ldots, t$, we can pick an arbitrary vertex
\[
 v_i \in \M(S) \setminus \Big( \{v_1, \ldots, v_{i-1}\} \cup Y\big(S, \{v_1, \ldots, v_{i-1}\} \big) \Big),
\]
and set $T = \{v_1, \ldots, v_t\}$.
Since we chose $v_i \in \M(S)$ and $v_i \not\in Y\big(S, \{v_1, \ldots, v_{i-1}\} \big)$, it follows that $(S, \{v_1, \ldots, v_i\})$ is good for every~$i$ and hence $(S,T)$ is good.
By~\eqref{eq:size_of_link}, the number of choices for each~$v_i$ is at least
\[
 |\M(S)| - \big( t + O(\delta k^s) \big) = \Omega( |\M(S)|)
\]
Thus the total number of choices is $\Omega\big(|\M(S)|^t\big)$ as claimed.

Finally, observe that $\sum_{S \in V(G)^{(s)}} |\M(S)| = |\M|$, and thus for a typical $s$-set~$S$ we have $|\M(S)| \sim \binom{n}{s}^{-1} |\M| = \Omega(k^s)$. Summing over all $S \in V(G)^{(s)}$ with $|\M(S)| = \Omega(k^s)$ and using convexity, it follows easily that the total number of $K_{s,t}$ in~$G$ that do not contain a saturated set of vertices is at least $\Omega(k^{st} n^s)$. Thus there exists a good $K_{s,t}$ not already in~$\HH$, as required.
\end{proof}

Theorems~\ref{thm:Kst:ESgood} and~\ref{thm:Kst:Turan} and Corollary~\ref{cor:Kst} follow easily from Theorem~\ref{thm:Kst:hypergraph} using the method of Sections~\ref{sec:proof} and~\ref{sec:Turan}, and so we shall give only a very rough outline of the proof. Observe first that for every $\sigma \subset E(G)$, we have
$$d_\HH(\sigma) \, \le \, \max_{ij \,\ge\, |\sigma|} D^{(i,j)}(k,n),$$
and therefore the value of $\tau$ we require in order to apply Theorems~\ref{thm:coveroff} and~\ref{thm:containers:turan} is roughly
$$\max_{2 \,\le\, a \,\le\, st} \bigg( \frac{e(G)}{|\HH|} \max_{ij \,\ge\, a} \big( k n^{(s-1)/s} \big)^{s-i} k^{s(t-j)} \bigg)^{1/(a-1)} \approx\, \max\Big\{ k^{-s}, \, k^{-1} n^{-(s-1)^2/s(st-1)} \Big\},$$
where the approximation (which indicates equality up to a constant factor) follows from a short calculation\footnote{More precisely, show that the maximum occurs when $ij = a$ and $j = t$ (since $k \le n^{1/s}$), and note that the function $\big( kn^{2-1/s} \cdot k^{-st} n^{-s} \cdot \big( k n^{(s-1)/s} \big)^{s-i} \big)^{1/(ij-1)}$ is increasing in $i$ if and only if $k \ge n^{(s-1)/s(st-1)}$.}. We thus obtain the following analogue of Theorem~\ref{thm:cycle:containers:turan}. 

\begin{thm}\label{thm:Kst:containers:turan}
For each $2 \le s \le t$, there exists a constant $C = C(s,t)$ such that the following holds for all sufficiently large $n,k \in \N$ with $k \le n^{(s-1) / s(st-1)} / (\log n)^{1/(s-1)}$. There exists a collection~$\G_{s,t}(n,k)$ of at most
$$\exp\Big( C k^{-(s-1)} n^{2-1/s} \log k \Big)$$
graphs on vertex set~$[n]$ such that
$$e(G) \le k n^{2 - 1/s}$$ 
for every $G \in \G_{s,t}(n,k)$, and every $K_{s,t}$-free graph is a subgraph of some~$G \in \G_{s,t}(n,k)$. 
\end{thm}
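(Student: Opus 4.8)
The plan is to follow, almost verbatim, the deduction of Theorem~\ref{thm:cycle:containers:turan} from Theorem~\ref{thm:cycle:hypergraph} carried out in Sections~\ref{sec:proof} and~\ref{sec:Turan}, with the even-cycle supersaturation input replaced by Theorem~\ref{thm:Kst:hypergraph}. The first step is to prove a $K_{s,t}$-analogue of Proposition~\ref{prop:containers_for_graphs}. Given a graph $G$ with $n$ vertices and $k n^{2-1/s}$ edges, apply Theorem~\ref{thm:Kst:hypergraph} to produce an $st$-uniform hypergraph $\HH$ on vertex set $E(G)$ encoding copies of $K_{s,t}$, with $e(\HH) = \Omega(k^{st}n^s)$ and $d_\HH(A,B) \le D^{(|A|,|B|)}(k,n)$. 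Passing to the one-parameter bound $d_\HH(\sigma) = O(1)\cdot\max_{ij\,\ge\,|\sigma|} D^{(i,j)}(k,n)$ for an unlabelled edge set $\sigma$, and carrying out the short optimization sketched just before the theorem, one then checks that the choice
$$\frac{1}{\tau} \;=\; \delta^{O(1)}\cdot\min\Big\{k^{s},\; k\,n^{(s-1)^2/s(st-1)}\Big\}$$
gives $\delta(\HH,\tau) \le \delta$, via exactly the term-by-term estimate of $\sum_{j=2}^{st}\tau^{-(j-1)}\sum_v d^{(j)}(v)$ used in the proof of Proposition~\ref{prop:containers_for_graphs}. Feeding this into Theorem~\ref{thm:coveroff} yields a family of subgraphs of $G$ covering all $K_{s,t}$-free subgraphs, each omitting a fixed proportion of $E(\HH)$ and hence, via the bound $d_\HH(v)\le e(\HH)/(\delta k n^{2-1/s})$ coming from parts $(a)$ and $(b)$ of Theorem~\ref{thm:Kst:hypergraph}, a fixed proportion of $E(G)$ as well.

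The second step is the iteration from the proof of Theorem~\ref{thm:cycle:containers:turan}: apply the container step to $K_n$, then to every resulting container that still has more than $k n^{2-1/s}$ edges, and so on, working with a geometrically decaying sequence of density parameters $k(i)$ that starts at $\Theta(n^{1/s})$ and stops once $k(i)\le k$. After $m = O(\log n)$ rounds this produces a family $\G_{s,t}(k)$ of graphs, each with at most $k n^{2-1/s}$ edges, covering all $K_{s,t}$-free graphs on $[n]$, of total size at most
$$\exp\bigg(O(1)\sum_{i=1}^{m}\max\Big\{k(i)^{-(s-1)}\log k(i),\; n^{-(s-1)^2/s(st-1)}\log n\Big\}\,n^{2-1/s}\bigg).$$
Since $k(i)$ decays geometrically, the sum is dominated by its final term $O\big(k^{-(s-1)}n^{2-1/s}\log k\big)$, provided $k^{-(s-1)}\log k$ beats a polylogarithmic multiple of $n^{-(s-1)^2/s(st-1)}$ — which is exactly what the hypothesis $k\le n^{(s-1)/s(st-1)}/(\log n)^{1/(s-1)}$ guarantees. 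This gives the claimed bound $\exp\big(C k^{-(s-1)}n^{2-1/s}\log k\big)$ on $|\G_{s,t}(k)|$, completing the proof.

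I expect the main obstacle to be the co-degree estimate in the first step. In contrast with the even-cycle case, the degree bound supplied by Theorem~\ref{thm:Kst:hypergraph} is genuinely two-parameter, so one must first reduce to $d_\HH(\sigma)=O(1)\max_{ij\ge|\sigma|}D^{(i,j)}(k,n)$ and then optimize to pin down the correct $\tau$: identifying the extremal pair $(i,j)$ (with $j=t$), and locating the crossover $k = n^{(s-1)/s(st-1)}$ between the two competing contributions $k^{-s}$ and $k^{-1}n^{-(s-1)^2/s(st-1)}$. Once $\tau$ is fixed, the verification of $\delta(\HH,\tau)\le\delta$, the translation between $\HH$-edges and $G$-edges, and the geometric iteration are all routine and proceed exactly as in Sections~\ref{sec:proof} and~\ref{sec:Turan}.
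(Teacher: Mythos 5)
Your proposal is correct and follows essentially the same route as the paper, which itself only sketches this deduction: reduce the two-parameter degree bound of Theorem~\ref{thm:Kst:hypergraph} to $d_\HH(\sigma)=O(1)\max_{ij\ge|\sigma|}D^{(i,j)}(k,n)$, optimize to find $\tau\approx\max\{k^{-s},\,k^{-1}n^{-(s-1)^2/s(st-1)}\}$, apply the container theorem to get the $K_{s,t}$-analogue of Proposition~\ref{prop:containers_for_graphs}, and iterate as in the proof of Theorem~\ref{thm:cycle:containers:turan}. You correctly identify the crossover between the two competing contributions and the role of the hypothesis $k\le n^{(s-1)/s(st-1)}/(\log n)^{1/(s-1)}$ in making the $k^{-(s-1)}\log k$ term dominate.
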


Corollary~\ref{cor:Kst} follows easily from Theorem~\ref{thm:Kst:containers:turan}. To prove Theorem~\ref{thm:Kst:Turan}, we use a similar analogue of Theorem~\ref{thm:cycle:containers:turan:refined}, and repeat the argument of Section~\ref{sec:Turan}.  

\section*{Acknowledgements}

The authors would like to thank J\'ozsef Balogh, Wojciech Samotij and Andrew Thomason for many interesting discussions on independent sets in hypergraphs over the past few years. As the reader might imagine, these have had a significant bearing on the present work.

They would also like to thank Wojciech Samotij, Mikl\'os Simonovits, Xiaochuan Liu and Dhruv Mubayi for helpful comments on the manuscript, and David Conlon for interesting discussions on $C_{2\ell}$-free graphs.


\begin{thebibliography}{99} 

\bibitem{ARS} N.~Alon, L.~R\'onyai and T. Szab\'o, Norm-graphs: variations and applications, \emph{J. Combin. Theory, Ser. B}, \textbf{76} (1999), 280--290.

\bibitem{BBS1} J.~Balogh, B.~Bollob\'as and M.~Simonovits, On the number of graphs without forbidden subgraph, {\em J. Combin. Theory, Ser. B}, \textbf{91} (2004), 1--24.

\bibitem{BBS2} J.~Balogh, B.~Bollob\'as and M.~Simonovits, The typical structure of graphs without given excluded subgraphs, \emph{Random Structures Algorithms}, \textbf{34} (2009), 305--318.

\bibitem{BBS3} J.~Balogh, B.~Bollob\'as and M.~Simonovits, The fine structure of octahedron-free graphs, \emph{J. Combin. Theory, Ser. B}, \textbf{101} (2011), 67--84.

\bibitem{BMS} J.~Balogh, R.~Morris and W.~Samotij, Independent sets in hypergraphs, \emph{J. Amer. Math. Soc.}, \textbf{28} (2015), 669--709.

\bibitem{BSC4} J.~Balogh and W.~Samotij, Almost all $C_4$-free graphs have fewer than $(1 - \eps) \ex(n,C_4)$ edges, \emph{SIAM J. Discrete Math.}, \textbf{24} (2010), 1011--1018.

\bibitem{BSmm} J.~Balogh and W.~Samotij, The number of $K_{m,m}$-free graphs, \emph{Combinatorica}, \textbf{31} (2011), 131--150.

\bibitem{BSst} J.~Balogh and W.~Samotij, The number of $K_{s,t}$-free graphs, \emph{J. London Math. Soc.}, \textbf{83} (2011), 368--388.

\bibitem{Benson} C.~Benson, Minimal regular graphs of girths eight and twelve, \emph{Canad. J. Math.}, \textbf{18} (1966), 1091--1094.

\bibitem{BBK} P.V.M.~Blagojevi\'c, B.~Bukh and R.~Karasev, Tur\'an numbers for $K_{s,t}$-free graphs: topological obstructions and algebraic constructions, \emph{Israel J. Math.}, \textbf{197} (2013), 199--214.

\bibitem{MGT} B.~Bollob\'as, Modern Graph Theory (2nd edition), Springer, 2002.

\bibitem{BS74} J.A.~Bondy and M.~Simonovits, Cycles of even length in graphs, \emph{J. Combin. Theory, Ser. B}, \textbf{16} (1974), 97--105.

\bibitem{Brown} W.G.~Brown, On graphs that do not contain a Thomsen graph, \emph{Canad. Math. Bull.}, \textbf{9} (1966), 281--285.

\bibitem{Bukh} B.~Bukh, Random algebraic construction of extremal graphs, to appear in \emph{Bull. London Math. Soc.}

\bibitem{BZ} B. Bukh and Z. Jiang, A bound on the number of edges in graphs without an even cycle, to appear in \emph{Combin. Probab. Computing}.

\bibitem{CM} M.~Collares Neto and R.~Morris, Maximum-size antichains in random sets, to appear in \emph{Random Structures Algorithms}.

\bibitem{CG} D.~Conlon and W.T.~Gowers, Combinatorial theorems in sparse random sets, submitted.

\bibitem{DKLRS1} D. Dellamonica, Y. Kohayakawa, S. Lee, V. R\"odl and W. Samotij, On the number of $B_h$-sets, to appear in \emph{Combin. Prob. Computing}. 

\bibitem{DKLRS2} D. Dellamonica, Y. Kohayakawa, S. Lee, V. R\"odl and W. Samotij, The number of $B_3$-sets of a given cardinality, to appear in \emph{J. Combin. Theory, Ser. A}.

\bibitem{DKLRS3} D. Dellamonica, Y. Kohayakawa, S. Lee, V. R\"odl and W. Samotij, The number of $B_h$-sets of a given cardinality, submitted.


\bibitem{E38} P.~Erd\H{o}s, On sequences of integers no one of which divides the product of two others and on some related problems, \emph{Mitt. Forsch.-Inst. Math. Mech. Univ. Tomsk}, \textbf{2} (1938), 74--82.

\bibitem{E64} P.~Erd\H{o}s, Extremal problems in graph theory, M Fiedler (Ed.), Theory of Graphs and Its Applications, Proc. Symp. Smolenice, 1963 (2nd ed.), Academic Press, New York (1965), pp. 29--36.

\bibitem{EKR} P.~Erd\H{o}s, D.J.~Kleitman and B.L.~Rothschild, Asymptotic enumeration of $K_n$-free graphs, in {\em  Colloquio Internazionale sulle Teorie Combinatorie} (Rome, 1973), Vol. II, pp. 19--27. {\em Atti dei Convegni Lincei}, {\bf 17}, Accad. Naz. Lincei, Rome, 1976.

\bibitem{EFR} P.~Erd\H{o}s, P.~Frankl and V.~R\"odl, The asymptotic number of graphs not containing a fixed subgraph and a problem for hypergraphs having no exponent, \emph{Graphs and Combinatorics}, \textbf{2} (1986), 113--121.

\bibitem{ERS} P.~Erd\H{o}s, A.~R\'enyi and~V.T.~S\'os, On a problem of graph theory, \emph{Studia Sci. Math. Hungar.}, \textbf{1} (1966), 215--235.

\bibitem{ES66} P.~Erd\H{o}s and M.~Simonovits, A limit theorem in graph theory, \emph{Studia Sci. Math. Hungar.}, \textbf{1} (1966), 51--57.

\bibitem{ES82} P.~Erd\H{o}s and M.~Simonovits, Compactness results in extremal graph theory, \emph{Combinatorica}, \textbf{2} (1982), 275--288.

\bibitem{ES84} P.~Erd\H{o}s and M.~Simonovits, Cube-supersaturated graphs and related problems, Progress in graph theory (Waterloo, Ont., 1982), pp. 203--218, Academic Press, Toronto, ON, 1984.

\bibitem{ES46} P.~Erd\H{o}s and A.~Stone, On the structure of linear graphs, \emph{Bull. Amer. Math. Soc.}, \textbf{52} (1946), 1087--1091.

\bibitem{FS} R.~Faudree and M.~Simonovits, Cycle-supersaturated graphs, in preparation.

\bibitem{FuS} Z.~F\"uredi and M.~Simonovits, The history of degenerate (bipartite) extremal graph problems, Erd\H{o}s Centennial, Bolyai Soc Math Studies, {\bf 25} (eds L.~Lov\'asz, I.~Ruzsa and V.~T.-S\'os) pp.~169--264.

\bibitem{F96} Z.~F\"uredi, New asymptotics for bipartite Tura\'n numbers, \emph{J. Combin. Theory, Ser. A}, \textbf{75} (1996), 141--144.

\bibitem{FNV} Z.~F\"uredi, A.~Naor and J.~Verstra\"ete,  On the Tur\'an number for the hexagon, \emph{Adv. Math.}, \textbf{203} (2006), 476--496. 

\bibitem{HKL} P.E.~Haxell, Y.~Kohayakawa and T.~\L uczak, Tur\'an's Extremal Problem in Random Graphs: Forbidding Even Cycles, \emph{J. Combin. Theory, Ser. B}, \textbf{64} (1995), 273--287.

\bibitem{KW96} D.~Kleitman and D.~Wilson, On the number of graphs which lack small cycles, manuscript, 1996.

\bibitem{KW82} D.~Kleitman and K.~Winston, On the number of graphs without 4-cycles, \emph{Discrete Math.}, \textbf{41} (1982), 167--172.

\bibitem{KKS} Y. Kohayakawa, B. Kreuter and A. Steger, An extremal problem for random graphs and the number of graphs with large even-girth, \emph{Combinatorica}, \textbf{18} (1998), 101--120. 

\bibitem{KLRS} Y.~Kohayakawa, S.~Lee, V.~R\"odl and W.~Samotij, The number of Sidon sets and the maximum size of Sidon sets contained in a sparse random set of integers, \emph{Random Structures Algorithms}, \textbf{46} (2015), 1--25. 

\bibitem{KPR} Ph.G.~Kolaitis, H.J.~Pr\"omel and B.L.~Rothschild, $K_{\ell+1}$-free graphs: asymptotic structure and a $0$-$1$ law, \emph{Trans. Amer. Math. Soc.}, \textbf{303} (1987), 637--671.

\bibitem{KRS} J.~Koll\'ar, L.~R\'onyai and T.~Szab\'o, Norm-graphs and bipartite Tur\'an numbers, \emph{Combinatorica}, \textbf{16} (1996), 399--406.

\bibitem{KST} T.~K\H{o}v\'ari, V.~S\'os and P.~Tur\'an, On a problem of K.~Zarankiewicz, \emph{Colloq. Math.}, \textbf{3} (1954), 50--57.

\bibitem{Kreuter} B.~Kreuter, Extremale und Asymptotische Graphentheorie f\"ur verbotene bipartite Untergraphen, Diplomarbeit, Forschungsinstitut f\"ur Diskrete Mathematik, Universit\"at Bonn, 1994.

\bibitem{LUW} F.~Lazebnik, V.A.~Ustimenko and A.J.~Woldar, Properties of certain families of $2k$-cycle free graphs, \emph{J. Combin. Theory, Ser. B}, \textbf{60} (1994), 

\bibitem{NRS} B.~Nagle, V.~R\"odl and M.~Schacht, Extremal hypergraph problems and the regularity method, Topics in Discrete Mathematics, \emph{Algorithms Combin.}, \textbf{26} (2006), 247--278.

\bibitem{P12} O. Pikhurko, A note on the Tur\'an function of even cycles, \emph{Proc. Amer. Math Soc.}, \textbf{140} (2012), 3687--3992.

\bibitem{PS} H.J.~Pr\"omel and A.~Steger, The asymptotic number of graphs not containing a fixed color-critical subgraph, \emph{Combinatorica}, \textbf{12} (1992), 463--473.

\bibitem{RS} V. R\"odl and M.~Schacht, Extremal results in random graphs, to appear in the Erd\H{o}s Centennial volume of \emph{Bolyai Soc. Math. Stud.}

\bibitem{Sax} D.~Saxton, Independent sets, hereditary properties and point systems, PhD thesis, University of Cambridge, 2012.

\bibitem{ST} D.~Saxton and A.~Thomason, Hypergraph containers, \emph{Inventiones Math.}, \textbf{201} (2015), 1--68.

\bibitem{Schacht} M.~Schacht, Extremal results for random discrete structures, submitted.

\bibitem{Sim82} M. Simonovits, Extremal graph problems, degenerate extremal problems and super-saturated graphs, in Progress in graph theory (Waterloo, Ont., 1982), pp. 419--437, Academic Press, Toronto, ON, 1984.

\bibitem{T41} P. Tur\'an, Eine Extremalaufgabe aus der Graphentheorie, \emph{Mat. Fiz. Lapok}, \textbf{48} (1941), 436--452.

\bibitem{V00} J.~Verstra\"ete, On arithmetic progressions of cycle lengths in graphs, \emph{Combin. Probab. Computing}, \textbf{9} (2000), 369--373.

\bibitem{W91} R.~Wenger, Extremal graphs with no $C_4$, $C_6$, or $C_{10}$, \emph{J. Combin. Theory, Ser. B}, \textbf{52} (1991), 113--116.

\end{thebibliography}
\end{document}